\theoremstyle{definition}
\newtheorem{dfn}{Definition}[section]
\newtheorem{example}[dfn]{Example}
\newtheorem{remark}[dfn]{Remark}
\newtheorem*{claim}{Claim}
\theoremstyle{plain}
\newtheorem{prop}[dfn]{Proposition}
\newtheorem{thm}[dfn]{Theorem}
\newtheorem{lem}[dfn]{Lemma}
\newcommand{\Q}{\mathbb{Q}}
\newcommand{\gr}{\mathrm{gr}}
\newcommand{\std}{\mathrm{std}}
\newcommand{\pa}{\partial}
\newcommand{\grt}{{\sf grt}}
\newcommand{\GRT}{{\sf GRT}}
\newcommand{\tder}{{\sf tder}}
\newcommand{\sder}{{\sf sder}}
\newcommand{\krv}{{\sf krv}}
\newcommand{\KRV}{{\sf KRV}}
\newcommand{\dk}{{\sf dk}}
\newcommand{\edk}{{\sf edk}}
\newcommand{\lie}{{\sf lie}}
\newcommand{\ass}{{\sf ass}}
\newcommand{\tr}{{\sf tr}}
\newcommand{\EM}{\mathrm{em}}
\def\bbQ{{\mathbb Q}}
\def\bbR{{\mathbb R}}
\def\calA{{\mathcal A}}
\def\calK{{\mathcal K}}
\def\dk{{\mathsf{dk}}}
\def\edk{{\mathsf{edk}}}
\def\PDS{{\mathit{PDS}}}
\newcommand{\rb}{\tikz[baseline=-2.75pt] \fill[color=red] (0,0) circle[radius=3pt]; \hspace{1.25pt}}
\newcommand{\bb}{\tikz[baseline=-2.75pt] \fill[color=blue] (0,0) circle[radius=2pt]; \hspace{1.25pt}}
\newcommand{\red}[1]{\textcolor{red}{#1}}
\newcommand{\blue}[1]{\textcolor{blue}{#1}}
\tikzset{every picture/.style={
baseline=0pt, line width=1pt, x=4mm, y=4mm}
}
\newcommand{\pc}[3]{
\draw[#1] (#3) --++(0,0.5) --++(1,1) -- ++(0,0.5);
\draw[#2] ($(#3)+(1,0)$) --++(0,0.5) --++(-0.3,0.3);
\draw[#2] ($(#3)+(0,2)$) --++(0,-0.5) --++(0.3,-0.3);
}
\newcommand{\nc}[3]{
\draw[#1] (#3) --++(0,0.5) --++(0.3,0.3);
\draw[#1] ($(#3)+(1,2)$) --++(0,-0.5) --++(-0.3,-0.3);
\draw[#2] ($(#3)+(1,0)$) --++(0,0.5) --++(-1,1) --+(0,0.5);
}
\newcommand{\npc}{
\begin{tikzpicture}[baseline=0.4pt, x=2.75mm, y=2.75mm]
\draw[->, semithick] (0,0) -- (1,1);
\draw[semithick] (1,0) -- (0.7, 0.3); 
\draw[->, semithick] (0.35, 0.65) -- (0,1);
\end{tikzpicture}
}
\newcommand{\nnc}{
\begin{tikzpicture}[baseline=0.4pt, x=2.75mm, y=2.75mm]
\draw[->, semithick] (1,0) -- (0,1);
\draw[semithick] (0,0) -- (0.3, 0.3); 
\draw[->, semithick] (0.65, 0.65) -- (1,1);
\end{tikzpicture}
}
\newcommand{\ndp}{
\begin{tikzpicture}[baseline=0.4pt, x=2.75mm, y=2.75mm]
\draw[->, semithick] (0,0) -- (1,1);
\draw[->, semithick] (1,0) -- (0,1);
\fill (0.5, 0.5) circle[radius=1pt];
\end{tikzpicture}
}
\tikzset{rs/.style={color=red, ultra thick}}
\tikzset{bs/.style={color=blue, semithick}}
\begin{document}

\title{Emergent version of Drinfeld's associator equations}

\author{Yusuke Kuno\thanks{Department of Mathematics, Tsuda University, 2-1-1 Tsuda-machi, Kodaira-shi, Tokyo 187-8577, Japan \texttt{e-mail:kunotti@tsuda.ac.jp}}}
\affil{with an appendix by Dror Bar-Natan}
\date{}

\maketitle

\begin{abstract}
The works of Alekseev and Torossian \cite{AT12} and Alekseev, Enriquez, and Torossian \cite{AET} show that any solution of Drinfeld's associator equations gives rise to a solution of the Kashiwara-Vergne equations in an explicit way.
We introduce a weak version of Drinfeld's associator equations that we call the {\it emergent} version of the original equations.
It is shown that solutions to the resulting linearized emergent Drinfeld's equations still lead to solutions to the linearized Kashiwara-Vergne equations. 

The emergent Drinfeld equations arise within a natural topological context of {\it emergent braids}, which we discuss.
Our results are adjacent to the results of Bar-Natan, Dancso, Hogan, Liu and Scherich \cite{LesDiablerets-2208, BNDHLS} on the relationship between emergent tangles and the Goldman-Turaev Lie bialgebra.
We hope that in time our results will play a role in relating several bodies of work, on Drinfeld associators, Kashiwara-Vergne equations, and on expansions for classical tangles, for w-tangles, and for the Goldman-Turaev Lie bialgebra.
\end{abstract}

\section{Introduction} \label{sec:Intro}

\subsection{Drinfeld associators and Kashiwara-Vergne equations} \label{subsec:AssocKV}

Alekseev and Torossian \cite{AT12} proved that any Drinfeld associator gives rise to a solution of the Kashiwara-Vergne (KV) problem \cite{KV78}. 
They reformulated the original KV problem in a universal form which involves the free Lie algebra in two variables.
The resulting (injective) map
\begin{equation} \label{eq:ASS_KV}
{\sf Assoc}_1 \hookrightarrow {\sf SolKV}
\end{equation}
from the set of Drinfeld associators (with coupling constant $1$) to the set of solutions to the KV problem has been made explicit by Alekseev, Enriquez and Torossian \cite{AET}.
There is also an explicit map between the corresponding spaces of infinitesimal deformations
\begin{equation} \label{eq:AT_nu}
\nu: \grt_1 \hookrightarrow \krv_2,
\end{equation}
where $\grt_1$ is the Grothendieck-Teichm\"{u}ller Lie algebra \cite[\S 5]{Drinfeld}, and $\krv_2$ is the Kashiwara-Vergne Lie algebra \cite[\S 4]{AT12}.
These graded Lie algebras integrate to the groups $\GRT_1$ and $\KRV_2$ which act freely and transitively on ${\sf Assoc}_1$ and ${\sf SolKV}$, respectively.

As was initially pointed out in \cite{Drinfeld}, the theory of Drinfeld associators has a topological nature.
Bar-Natan \cite{BN98} gave an interpretation of Drinfeld associators as $1$-formality isomorphisms (homomorphic expansions) of the category ${\bf PaB}$ of parenthesized braids.
The KV theory admits similar topological interpretations too, at least in two ways.
One is given by Bar-Natan and Dancso \cite{BND17} in terms of welded foams, a class of singular surfaces in $\mathbb{R}^4$.
The other is given by Alekseev, Kawazumi, Kuno and Naef \cite{AKKN18, AKKN_hg} in terms of the Goldman-Turaev Lie bialgebra \cite{Go86, Turaev91}, an algebraic structure on the free homotopy classes of loops in an oriented surface. 

With the topological interpretations of the Drinfeld associators and the KV theory mentioned above in mind, we introduce a weak version of the Drinfeld's associator equations that we call the {\it emergent} version of the original equations, and a graded vector space $\grt_1^{\EM}$ as the space of solutions to the linearized emergent equations.
Briefly, we obtain the emergent equations by working with certain subquotients of the target space of the original equations.
For instance, Drinfeld's pentagon equation for $\grt_1$ takes place in the Drinfeld-Kohno Lie algebra $\dk_4$ on four strands, which is generated by six elements $t_{ij}$ with $1 \le i < j \le 4$.
The corresponding emergent pentagon equation for $\grt_1^\EM$ takes place in $\edk_{2,2}$, which is defined to be the Lie subalgebra of $\dk_4$ generated by $t_{ij}$'s with $(i,j) \neq (1,2)$ modulo the commutator of the Lie ideal generated by $t_{34}$.
More generally, for any $m, n \ge 0$ we define a subquotient $\edk_{m,n}$ of the Drinfeld-Kohno Lie algebra $\dk_{m+n}$.
A topological context for $\edk_{m,n}$ is explained in Appendix.
We show that the map \eqref{eq:AT_nu} decomposes as 
\begin{equation} \label{eq:nu_decomp}
\nu: \grt_1 \hookrightarrow \grt_1^\EM \overset{\nu^\EM}{\hookrightarrow} \krv_2,
\end{equation}
and identify the image of $\nu^\EM$.

The emergent Drinfeld equations arise within a natural topological context of {\it emergent braids}.
As we will show, the defining equations for $\grt_1^\EM$ involve operations of the linearized version of the Goldman-Turaev Lie bialgebra.
We expect the emergent braids to serve as an intermediate object relating the topological aspect of Drinfeld associators and the KV theory.
In a future work, we hope to continue our study towards a decomposition of the map~\eqref{eq:ASS_KV} from the perspective of emergent braids.

\subsection{Statement of the main result} \label{sec:intro_main}

Let us give a precise statement about the decomposition \eqref{eq:nu_decomp}, which is the main result of this paper.
We need a few more details about the map~\eqref{eq:AT_nu}. 

Let $\lie_2 = \lie(x,y)$ be the free Lie algebra on two variables $x$ and $y$.
Recall that $\grt_1$ is the space of Lie polynomials $\psi = \psi(x,y) \in \lie_2$ satisfying a certain set of equations (one pentagon and two hexagon equations), and that the elements of $\krv_2$ are pairs $(u(x,y),v(x,y))$ of two Lie polynomials satisfying two equations (KV1) and (KV2) (see Section~\ref{subsec:KVliealg} for more precise definitions).
Then the map~\eqref{eq:AT_nu} is given by the following formula~\cite[Theorem~4.6]{AT12}:
\[
\nu(\psi) = (\psi(-x-y,x), \psi(-x-y, y)).
\]

Let $\ass_2 = \ass(x,y)$ be the free associative algebra on two variables $x$ and $y$.
One can naturally regard $\lie_2$ as a subspace of $\ass_2$.
The (non-commutative) partial differential operators $\pa_x, \pa_y : \lie_2 \to \ass_2$ are defined by the formula
$a = (\pa_x a) x + (\pa_y a) y$
for any $a \in \lie_2$.
Let $R : \lie_2 \to \ass_2$ be the unique linear map satisfying $R(x) = R(y) = 0$ and 
\begin{align*}
R([a,b]) =& [R(a), b] + [a, R(b)] \\ 
& + (\pa_x b) x \, \iota(\pa_x a) - (\pa_x a) x \, \iota(\pa_x b) +
 (\pa_y b) y \, \iota(\pa_y a) - (\pa_y a) y \, \iota(\pa_y b)
\end{align*}
for any $a, b\in \lie_2$.
Here, $\iota$ is the anti-automorphism of $\ass_2$ defined by $\iota(x) = -x$ and $\iota(y) = -y$; for instance, $\iota(xxy) = -yxx$.
Let $\grt_1^\EM$ be the space of Lie polynomials $\varphi = \varphi(x,y) \in \lie_2$ satisfying the following equations:
\begin{align}
& \varphi(y,0) - \varphi(x+y,0) = 0, \label{eq:intro_grtEM1} \\
& (\pa_y \varphi)(x,y) + (\pa_y \varphi)(y,0) - (\pa_y \varphi)(x+y,0) -R(\varphi) = 0, \label{eq:intro_grtEM2} \\
& [x, \varphi(y,x)] + [y, \varphi(x,y)] = 0. \label{eq:intro_grtEM3}
\end{align}
The equations \eqref{eq:intro_grtEM1} and \eqref{eq:intro_grtEM2} correspond to the emergent version of the pentagon equation.
The equation \eqref{eq:intro_grtEM3} appears by a rather technical reason, but it has a topological explanation as well.
By construction, there is an injection $\grt_1 \hookrightarrow \grt_1^\EM, \psi(x,y) \to \psi(-x-y, y)$.

An element $(u(x,y), v(x,y)) \in \krv_2$ is called symmetric if $v(x,y) = u(y,x)$. 
The space of symmetric elements in $\krv_2$ forms a Lie subalgebra denoted by $\krv_2^{\rm sym}$.
For a Lie polynomial $\varphi = \varphi(x,y) \in \lie_2$, set
\[
\nu^\EM(\varphi) := (\varphi(y,x), \varphi(x,y)).
\]
The main result of this paper is the following, which in particular proves the decomposition of $\nu$ shown in \eqref{eq:nu_decomp}.

\begin{thm} \label{thm:main}
\begin{enumerate}
\item[{\rm (i)}] For any $\varphi \in \grt_1^\EM$, we have $\nu^\EM(\varphi) \in \krv_2^{\rm sym}$.
\item[{\rm (ii)}] The map 
$
\nu^\EM: \grt_1^\EM \to (\krv_2^{\rm sym})_{\ge 2}
$
is a graded linear isomorphism.
\end{enumerate}
\end{thm}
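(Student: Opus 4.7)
The plan is to prove both parts by translating the three defining equations of $\grt_1^\EM$ into equivalent statements about the pair $(u,v) := (\varphi(y,x), \varphi(x,y))$, exploiting the fact that $\nu^\EM$ lands in the symmetric locus by construction. Indeed, $u(y,x) = \varphi(x,y) = v(x,y)$, so symmetry holds automatically, and injectivity in part (ii) is immediate since $\varphi$ is recovered as the second coordinate of $\nu^\EM(\varphi)$. The grading restriction $\ge 2$ in part (ii) matches the convention that elements of $\krv_2$ carry no linear part.

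For part (i), the task is to show that each emergent equation for $\varphi$ delivers one of the Kashiwara--Vergne identities for $(u,v)$. Equation \eqref{eq:intro_grtEM3}, rewritten as $[x, u(x,y)] + [y, v(x,y)] = 0$, is exactly the form that (KV1) takes in the graded Lie algebra $\krv_2$. The divergence equation (KV2) is more delicate: the strategy is to split it into a value and a derivative along the substitution $y \mapsto 0$. Equation \eqref{eq:intro_grtEM1} should record the trace-level identity at the specializations $(x,y) \mapsto (y,0)$ and $(x,y) \mapsto (x+y, 0)$, while \eqref{eq:intro_grtEM2} should record the corresponding $\pa_y$-derivative. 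The operator $R$ enters precisely because $\pa_y : \lie_2 \to \ass_2$ is not a derivation; its defining Leibniz-type recursion with the correction terms involving $\iota$ is engineered so that $R(\varphi)$ captures exactly the non-derivation defect of $\pa_y$ on Lie brackets, which is the combinatorial content of the divergence map in this setting.

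Part (ii) then follows by reversing these implications. Given a symmetric element written as $(u(x,y), u(y,x)) \in (\krv_2^{\rm sym})_{\ge 2}$, set $\varphi := u$: the symmetric form of (KV1) produces \eqref{eq:intro_grtEM3}, and (KV2) produces \eqref{eq:intro_grtEM1} and \eqref{eq:intro_grtEM2} via the same substitution-and-differentiation calculus as in (i). Since the three emergent equations are individually necessary and, under this translation, jointly sufficient, $\nu^\EM$ is a graded linear bijection onto $(\krv_2^{\rm sym})_{\ge 2}$.

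The main obstacle is the precise identification of $R(\varphi)$ with the relevant component of the divergence of the tangential derivation associated with $(u,v)$. The operator $R$ is given only implicitly by a recursion on brackets starting from $R(x) = R(y) = 0$; to compare it with the divergence one must unravel this recursion and match it against the cyclic-word definition of the trace. The natural route is to work inside $\ass_2$ and use generating-series manipulations with $\pa_x, \pa_y$ and $\iota$, so that both sides of the desired identity are written in a canonical form amenable to termwise comparison. Once this identification is in place, the rest of the argument is bookkeeping.
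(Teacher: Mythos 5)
Your structural observations are fine: symmetry of $\nu^\EM(\varphi)$ is automatic, injectivity is immediate, and equation \eqref{eq:intro_grtEM3} is exactly the condition $\rho(\nu^\EM(\varphi))(x+y)=0$, i.e.\ $\nu^\EM(\varphi)\in\sder_2$. But the heart of both directions is the relation between the operator $R$ and the divergence cocycle, and your plan does not supply it. First, describing $R(\varphi)$ as ``the non-derivation defect of $\pa_y$ on Lie brackets'' is not accurate: the recursion \eqref{eq:Rrecursive} involves $\pa_x$ and $\pa_y$ symmetrically, and the correct identification (Proposition~\ref{prop:Rmugr}) is $R=\mu^f_{\gr}$, the linearized self-intersection operation on loops in a punctured disk. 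The step you defer as ``the main obstacle'' is precisely where the proof lives: it is carried by the loop-operation characterizations of $\krv_2$ and $\krv_2^0$ (Theorem~\ref{thm:sderkrv}, Proposition~\ref{prop:u_delta}, and the identity $d_{\tilde u}=\sigma_{\gr}({\rm div}(\tilde u))$ of Remark~\ref{rem:d_u}). It cannot be replaced by ``generating-series manipulations and termwise comparison'' because (KV2) is a condition on a \emph{cyclic} word, whereas \eqref{eq:intro_grtEM2} is an identity in $\ass_2$; the trace map is far from injective, so no amount of substitution-and-differentiation recovers \eqref{eq:intro_grtEM2} from (KV2) alone. In the paper this upgrade comes from the strictly stronger statement that $\rho(\tilde u)$ commutes with the non-traced operation $\mu^f_{\gr}=R$ — and that statement characterizes $\krv_2^0$, not all of $\krv_2$.

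This points to the second, fatal gap in your surjectivity argument. A general $\tilde u\in(\krv_2^{\rm sym})_{\ge 2}$ has ${\rm div}(\tilde u)=|f(x)+f(y)-f(x+y)|$ for an a priori unknown $f$, and the computation $0=R(\tilde u(y))=[y,R(\varphi)-\pa_y\varphi]$ that yields \eqref{eq:intro_grtEM2} is available only for $\tilde u\in\krv_2^0$. The paper closes this by quoting that even-degree elements of $\krv_2$ have vanishing divergence and that for odd $k$ Drinfeld's elements $\sigma_k\in\grt_1$ realize the divergence $|x^k+y^k-(x+y)^k|$, so that $\tilde u-c\,\nu(\sigma_k)\in\krv_2^0$ for a suitable constant $c$; surjectivity then follows from Step~1 plus $\nu(\sigma_k)=\nu^\EM(\psi_k)$. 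Your ``reverse the implications'' plan has no substitute for this reduction. A smaller inaccuracy: \eqref{eq:intro_grtEM1} does not record a trace-level specialization of (KV2); it only says the coefficient of $x$ in $\varphi$ vanishes and is redundant in degrees $\ge 2$ (Remark~\ref{rem:p1}), so the role you assign it in splitting (KV2) into a ``value'' and a ``derivative'' part is not right.
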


It turns out that the space $\grt_1^\EM$ has a Lie algebra structure such that the injection $\grt_1 \to \grt_1^\EM$ and the map $\nu^\EM$ become Lie homomorphisms.
The Lie bracket $[\cdot, \cdot]^{\EM}$ on $\grt_1^\EM$ is given by the formula
\[
[\varphi_1, \varphi_2]^\EM = [\varphi_1, \varphi_2] + 
\rho_1(\varphi_2) - \rho_2(\varphi_1), 
\]
where the first term in the right hand side is the usual Lie bracket in $\lie_2$, and $\rho_i$ is the derivation on $\lie_2$ defined by $x \mapsto \varphi_i(y,x)$ and $y \mapsto \varphi_i(x,y)$.

\begin{remark}
It is not known whether $\krv_2^{\rm sym}$ coincides with $\krv_2$ or not \cite[Remark~8.10]{AT12}.
The map $\nu$ is conjectured to give a graded isomorphism $\grt_1 \cong (\krv_2)_{\ge 2}$ \cite[\S 4.2]{AT12}.
If this conjecture is correct, then it implies $\krv_2^{\rm sym} = \krv_2$ and $\grt_1 = \grt_1^{\EM}$.
\end{remark}

The map $R$ in the defining equation of $\grt_1^\EM$ is closely related to an operation of the linearization of the Goldman-Turaev Lie bialgebra of a punctured disk.
Our proof of Theorem~\ref{thm:main} is based on this fact and an interpretation of the KV theory in terms of the Goldman-Turaev Lie bialgebra \cite{AKKN18, AKKN_hg}.

We mention several recent works related to the content of this paper.
Our results and the idea of emergent knotted objects are adjacent to a work by Bar-Natan, Dancso, Hogan, Liu and Scherich \cite{LesDiablerets-2208, BNDHLS}, although there is a difference in the setting: the objects considered in \cite{LesDiablerets-2208, BNDHLS} are the quotient of our emergent objects by the Conway relation.
Works by Alekseev, Naef and Ren \cite{ANR24} and Naef and Navarro Betancourt \cite{NN_in_prep} discuss essentially the same object as ours but from different perspectives.

\subsection{Organization of the paper} \label{subsec:organization}

This paper has one appendix, written by Dror Bar-Natan, which explains the concept of ``emergent knotted objects''.
Reading it before proceeding to Section~\ref{sec:mbcd} will help the reader understand the philosophical background of this work.

In Sections~\ref{sec:mbcd} to \ref{sec:hexpmb}, we discuss emergent braids.
Actually, it is more natural to consider more general objects, and so we begin by introducing the concepts of mixed braids and chord diagrams in Section~\ref{sec:mbcd}.
In Section~\ref{sec:ebcd} we study the structure of the emergent Drinfeld-Kohno Lie algebra.
In Section~\ref{sec:hexpmb}, we introduce the parenthesized version of mixed braids and emergent braids, formulate the notion of $1$-formality isomorphisms (homomorphic expansions) of the categories of parenthesized mixed/emergent braids, and extract the emergent version of Drinfeld's associator equations.

The last two sections are devoted to the proof of Theorem~\ref{thm:main}.
In Section~\ref{sec:lopKV}, we recall necessary materials from \cite{AKKN18, AKKN_hg} on the (linearized) Goldman-Turaev Lie bialgebra of a punctured disk and its relation to the KV theory. 
In Section~\ref{sec:pfmain}, we prove Theorem~\ref{thm:main}.

\subsubsection*{Acknowledgements}
The content of this paper grew out of a collaboration with Dror Bar-Natan.
He coined the notion of ``emergent knotted objects'', and has spent considerable time with the author on the computer implementation of the emergent associator equations.
The author would like to express special thanks to him for collaborating on various stages of this work.
The author also would like to thank Hidekazu Furusho for valuable comments and suggestions.

This research is supported by JSPS KAKENHI 23K03121 and 24K00520.

\subsubsection*{Notation}

\begin{itemize}
\item 
Throughout this paper we work over the rationals $\Q$, though all of our arguments hold true over any field of characteristic zero.

\item 
For a nonnegative integer $n$, let $\ass_n$ be the free associative algebra on $n$ free generators.
When we need to specify generators, we write $\ass_n = \ass(x_1,\ldots,x_n)$ for example.

\item
The algebra $\ass_n$ has a structure of Hopf algebra whose coproduct, antipode and augmentation are given on generators by $\Delta(x_i) = x_i \otimes 1 + 1 \otimes x_i$, $\iota(x_i) = -x_i$ and $\varepsilon(x_i) = 1$.
We also use the notation $\overline{a} = \iota(a)$ for the antipode.

\item
We denote by $\lie_n = \lie(x_1,\ldots,x_n)$ the free Lie algebra on $n$ free generators $x_1,\ldots,x_n$.
One can identify $\lie_n$ with the space of primitive elements in $\ass_n$, namely $\lie_n = \{ a \in \ass_n \colon \Delta(a) = a \otimes 1 + 1 \otimes a\}$.
It holds that $\iota(a) = -a$ for any $a \in \lie_n$.

\item
Let $\mathcal{C}$ be a groupoid or more generally a category, and $O, O'$ objects in $\mathcal{C}$.
We denote by $\mathcal{C}(O, O')$ the set of morphisms in $\mathcal{C}$ from $O$ to $O'$.
\end{itemize}

\section{Mixed braids and chord diagrams} \label{sec:mbcd}

We introduce the notion of mixed braids.
Then we define the notion of mixed chord diagrams as the corresponding associated graded object.

\subsection{Mixed braids} \label{subsec:mb}

For a nonnegative integer $l$, let $B_l$ be Artin's braid group on $l$ strands.
Our convention about the product of $B_l$ is as follows: the product $\beta \beta'$ of two braids $\beta$ and $\beta'$ is the braid obtained by placing $\beta'$ above $\beta$.
For example,
\[
\begin{tikzpicture}[baseline=8pt]
\draw (0,0) -- (0,0.5) -- (1,1.5) --(1,2);
\draw (1,0) -- (1,0.5) -- (0.7, 0.8);
\draw (0.3, 1.2) -- (0,1.5) -- (0,2);
\draw (2,0) -- (2,2);
\end{tikzpicture}
\, \, \cdot \, \, 
\begin{tikzpicture}[baseline=8pt]
\draw (0,0) -- (0,2);
\draw (1,0) -- (1,0.5) -- (1.3, 0.8);
\draw (1.7,1.2) -- (2,1.5) -- (2,2);
\draw (2,0) -- (2,0.5) -- (1, 1.5);
\draw (1, 1.5) -- (1,2);
\end{tikzpicture}
=
\begin{tikzpicture}[baseline=14pt, x=4mm, y=3mm]
\draw (0,0) -- (0,0.5) -- (1,1.5) --(1,2);
\draw (1,0) -- (1,0.5) -- (0.7, 0.8);
\draw (0.3, 1.2) -- (0,1.5) -- (0,2);
\draw (2,0) -- (2,2);
\draw (0,2) -- (0,4);
\draw (1,2) -- (1,2.5) -- (1.3, 2.8);
\draw (1.7, 3.2) -- (2, 3.5) -- (2,4);
\draw (2,2) -- (2, 2.5) -- (1, 3.5);
\draw (1, 3.5) -- (1, 4);
\end{tikzpicture} \, .
\]

\begin{dfn}
Fix nonnegative integers $m$ and $n$.
A {\it mixed braid} of type $(m,n)$ is an element of $B_{m+n}$ equipped with a coloring of its strands with either red or blue such that 
\begin{itemize}
\item
there are $m$ red colored strands which we draw slightly thicker and $n$ blue colored strands which we draw slightly thinner, and 
\item 
if we forget all the blue colored strands and view the rest as an element in $B_m$, we are left with the trivial $m$-braid.
\end{itemize}
\end{dfn}

A blue colored strand in a mixed braid is simply called a {\it strand}, and a red colored strand is called a {\it pole}.

\begin{example} \label{exple:mb}
In the following three pictures, the first two pictures are mixed braids of type $(2,2)$.
Observe that their underlying braids on $2+2 = 4$ strands are the same.
However, the picture on the right is not a mixed braid.
\[
\begin{tikzpicture}[baseline=28pt, x=4mm, y=3mm]
\draw[rs] (0,0) -- (0,6);
\pc{rs}{bs}{0,6}
\pc{rs}{bs}{1,0}
\nc{rs}{bs}{2,2}
\pc{bs}{bs}{1,4}
\nc{bs}{rs}{2,6}
\draw[bs] (3,0) -- (3,2);
\draw[bs] (1,2) -- (1,4);
\draw[rs] (3,4) -- (3,6);
\end{tikzpicture}
\hspace{6em}
\begin{tikzpicture}[baseline=28pt, x=4mm, y=3mm]
\draw[rs] (0,0) -- (0,6);
\pc{rs}{bs}{0,6}
\pc{bs}{rs}{1,0}
\nc{bs}{bs}{2,2}
\pc{rs}{bs}{1,4}
\nc{rs}{bs}{2,6}
\draw[bs] (3,0) -- (3,2);
\draw[rs] (1,2) -- (1,4);
\draw[bs] (3,4) -- (3,6);
\end{tikzpicture}
\hspace{4em}
\text{non-example:}
\quad 
\begin{tikzpicture}[baseline=28pt, x=4mm, y=3mm]
\draw[rs] (0,0) -- (0,6);
\pc{rs}{rs}{0,6}
\pc{bs}{bs}{1,0}
\nc{bs}{rs}{2,2}
\pc{bs}{rs}{1,4}
\nc{bs}{bs}{2,6}
\draw[rs] (3,0) -- (3,2);
\draw[bs] (1,2) -- (1,4);
\draw[bs] (3,4) -- (3,6);
\end{tikzpicture}
\]
\end{example}

We denote by $B_{m,n}$ the set of mixed braids of type $(m,n)$.
One can construct the product of two mixed braids $\beta, \beta'$ of the same type when the coloring of the strands of $\beta$ at the top matches that of $\beta'$ at the bottom.
In this manner, the set $B_{m,n}$ forms a groupoid.
Its set of objects is the set $W_{m,n}$ of words of length $m+n$ consisting of $m$ red (slightly bigger) bullets $\rb$ and $n$ blue (slightly smaller) bullets $\bb$. 
When $o \in W_{m,n}$, the word $o$ is called of type $(m,n)$.
For  $o, o' \in W_{m,n}$, we denote by $B_{m,n}(o,o')$ the set of mixed braids whose bottom and top ends match $o$ and $o'$, respectively.
For example, the leftmost picture in Example~\ref{exple:mb} is an element in $B_{2,2}(\rb \rb \bb \bb, \bb \rb \rb \bb)$.

\begin{dfn}
Let $m, n \ge 0$ and let $o, o' \in W_{m,n}$.
A {\it mixed permutation} (of type $(m,n)$) from $o$ to $o'$ is a permutation $\sigma$ of $m+n$ letters such that 
\begin{itemize}
\item for any $1 \le i \le m+n$, the $i$th letter of $o$ and the $\sigma(i)$th letter of $o'$ have the same color, and 
\item if we forget all the blue letters in $o$ and $o'$ and view the restriction of $\sigma$ to the red bullets as a permutation of $m$ letters, then it is trivial.
\end{itemize}
\end{dfn}

Alternatively, a mixed permutation is a mixed braid whose over/under information at each crossing of strands is lost.
For example, 
\[
\begin{tikzpicture}[x=2mm, y=3mm]
\draw[rs] (0,0) -- (3,4);
\draw[bs] (1,0) -- (1,4);
\draw[bs] (2,0) -- (0,4);
\draw[bs] (3,0) -- (2,4);
\end{tikzpicture}
\]
is a mixed permutation from $\rb \bb \bb \bb$ to $\bb \bb \bb \rb$ given by $1 \mapsto 4, 2 \mapsto 2, 3 \mapsto 1$, and $4 \mapsto 3$.

For $o, o' \in W_{m,n}$, we denote by $\mathfrak{S}_{m,n}(o,o')$ the set of mixed permutations from $o$ to $o'$.
The set $\mathfrak{S}_{m,n} = \bigsqcup_{o, o' \in W_{m,n}} \mathfrak{S}_{m,n}(o,o')$ of all mixed permutations of type $(m,n)$ naturally forms a groupoid.
The forgetful map 
\[
\pi : B_{m,n} \to \mathfrak{S}_{m,n}
\]
is a homomorphism of groupoids.

Let
$
o^\std_{m,n} := \underbrace{\rb \red{\cdots} \rb}_m \underbrace{\bb \blue{\cdots} \bb}_n \in W_{m,n}
$, 
then the set $B^\std_{m,n} := B_{m,n}(o^\std_{m,n}, o^\std_{m,n})$ forms a group with respect to the groupoid structure of $B_{m,n}$.
One can regard $B_{m,n}^\std$ as a subgroup of $B_{m+n}$ in a natural way.

The trivial permutation of degree $m+n$ defines the mixed permutation
\[
1_{m,n}:= 
\begin{tikzpicture}[baseline=12pt, x=3mm, y=3mm]
\draw[rs] (0,0) -- (0,4);
\draw[rs] (3,0) -- (3,4);
\draw[bs] (4,0) -- (4,4);
\draw[bs] (8,0) -- (8,4);
\node at (1.75,0.5) {$\cdots$};
\node at (6,0.5) {$\cdots$};
\node at (0,-0.5) {\small $1$};
\node at (3,-0.5) {\small $m$};
\node at (4,-0.5) {\small $1$};
\node at (8,-0.5) {\small $n$};
\end{tikzpicture}
\quad \in \mathfrak{S}_{m,n}(o^\std_{m,n}, o^\std_{m,n}).
\]
Then, $P_{m,n} : = \pi^{-1}(1_{m,n})$ is a normal subgroup of $B^\std_{m,n}$.
We call $P_{m,n}$ the {\it mixed pure braid group} of type $(m,n)$.
In fact, Lambropoulou~\cite[Sections 2 and 3]{Lam00} introduced the same group with the same notation and gave its explicit presentation.
In particular, $P_{m,n}$ is generated by the following elements $\alpha_{ij}$, where $1\le i \le m$, $1 \le j\le n$, and $\tau_{ij}$, where $1 \le i<j \le n$:
\[
\alpha_{ij} = 
\begin{tikzpicture}[baseline=18pt]
\draw[rs] (0,0) -- (0,4);
\draw[rs] (2,0) -- (2,2.3);
\draw[rs] (2,2.6) -- (2,4);
\draw[rs] (4,0) -- (4,1.1);
\draw[rs] (4,1.4) -- (4,2.65);
\draw[rs] (4,2.95) -- (4,4);
\draw[bs] (5,0) -- (5,0.9);
\draw[bs] (5,1.2) -- (5,2.85);
\draw[bs] (5,3.15) -- (5,4);
\draw[bs] (7,0) -- (7,0.55);
\draw[bs] (7,0.85) -- (7,3.15);
\draw[bs] (7,3.45) -- (7,4);
\draw[bs] (8,0) -- (8,0.5) -- (2.2,1.55);
\draw[bs] (1.8,2.4) -- (8,3.5) -- (8,4);
\draw[bs] (10,0) -- (10,4);
\draw[bs] (1.8,1.62) to[bend left=90] (1.8,2.4);
\node at (1,0.5) {$\cdots$};
\node at (3,0.5) {$\cdots$};
\node at (6,0.5) {$\cdots$};
\node at (9,0.5) {$\cdots$};
\node at (0,-0.5) {\small $1$};
\node at (2,-0.5) {\small $i$};
\node at (4,-0.5) {\small $m$};
\node at (5,-0.5) {\small $1$};
\node at (8,-0.5) {\small $j$};
\node at (10,-0.5) {\small $n$};
\end{tikzpicture},
\qquad
\tau_{ij} = 
\begin{tikzpicture}[baseline=18pt]
\draw[rs] (0,0) --(0,4);
\draw[rs] (2,0) --(2,4);
\draw[bs] (3,0) --(3,4);
\draw[bs] (5,0) --(5,2.3);
\draw[bs] (5,2.6) --(5,4);
\draw[bs] (6,0) --(6,1.2);
\draw[bs] (6,1.5) --(6,2.6);
\draw[bs] (6,2.9) --(6,4);
\draw[bs] (8,0) --(8,0.6);
\draw[bs] (8,0.9) --(8,3.1);
\draw[bs] (8,3.4) --(8,4);
\draw[bs] (9,0) --(9,0.5) --(5.2, 1.55);
\draw[bs] (9,4) --(9,3.5) --(4.8, 2.4);
\draw[bs] (4.8, 1.62) to[bend left=90] (4.8, 2.4);
\draw[bs] (11,0) --(11,4);
\node at (1,0.5) {$\cdots$};
\node at (4,0.5) {$\cdots$};
\node at (7,0.5) {$\cdots$};
\node at (10,0.5) {$\cdots$};
\node at (0,-0.5) {\small $1$};
\node at (2,-0.5) {\small $m$};
\node at (3,-0.5) {\small $1$};
\node at (5,-0.5) {\small $i$};
\node at (9,-0.5) {\small $j$};
\node at (11,-0.5) {\small $n$};
\end{tikzpicture}.
\]

Collecting all types of mixed braids and mixed permutations we consider the groupoids $B_{\rb \bb}:= \bigsqcup_{m,n \ge 0} B_{m,n}$ and $\mathfrak{S}_{\rb \bb} := \bigsqcup_{m,n \ge 0} \mathfrak{S}_{m,n}$.
Both of them have $W_{\rb \bb} := \bigsqcup_{m,n \ge 0} W_{m,n}$ as the set of objects.
Following the treatment in \cite[Section 2.2.1]{BN98}, we define the category ${\bf MB}$ of mixed braids as a $\Q$-linear extension of the groupoid $B_{\rb \bb}$.
Its set of objects is $W_{\rb \bb}$.
Let $o, o' \in W_{\rb \bb}$.
If the types of $o$ and $o'$ are different, there is no morphism from $o$ to $o'$.
If not, then morphisms from $o$ to $o'$ are pairs $( \sum_j c_j \beta_j, \sigma)$, where $\sigma \in \mathfrak{S}_{\rb \bb}(o, o')$ and $\sum_j c_j \beta_j$ is a $\Q$-linear combination of mixed braids such that $\pi(\beta_j) = \sigma$ for all $j$.
Thus when the types of $o$ and $o'$ are the same, we have ${\bf MB}(o,o') = \bigsqcup_{\sigma \in \mathfrak{S}_{\rb \bb}(o,o')} {\bf MB}(o,o')_{\sigma}$, where the subscript $\sigma$ stands for consisting of elements which have $\sigma$ as the second entry.
The composition in ${\bf MB}$ is naturally induced from the composition in $B_{\rb \bb}$.

\subsection{Mixed version of the Drinfeld-Kohno Lie algebra} \label{subsec:mdk}

Let $n$ be a nonnegative integer.
Recall that the {\it Drinfeld-Kohno Lie algebra}, which we denote by $\dk_n$, is the graded Lie algebra generated by
degree one elements $t_{ij} = t_{ji}$ for $1\le i \neq j \le n$ subject to the commutation relation $[t_{ij},t_{kl}] = 0$ for distinct indices $i,j,k,l$, and the 4T relation $[t_{ij} + t_{jk}, t_{ik}] = 0$ for distinct indices $i, j, k$.
In a diagrammatic language, $\dk_n$ is the  Lie algebra of horizontal chord diagrams on $n$ vertical lines, and the generator $t_{ij}$ corresponds to the chord diagram consisting of a single chord connecting the $i$th and $j$th lines:
\[
t_{ij} = 
\begin{tikzpicture}[baseline=18pt]
\draw (0,0) -- (0,4);
\draw (2,0) -- (2,4);
\draw (4,0) -- (4,4);
\draw (6,0) -- (6,4);
\draw (2,2) -- (4,2);
\node at (1,0.5) {$\cdots$};
\node at (3,0.5) {$\cdots$};
\node at (5,0.5) {$\cdots$};
\node at (0,-0.5) {\small $1$};
\node at (2,-0.5) {\small $i$};
\node at (4,-0.5) {\small $j$};
\node at (6,-0.5) {\small $n$};
\end{tikzpicture}.
\]
For every $n >0$, there is a semi-direct product decomposition
\begin{equation} \label{eq:dk_decomp}
\dk_n = \dk_{n-1} \ltimes \lie(t_{1n},\ldots, t_{(n-1)n}).
\end{equation}

It is known that the universal enveloping algebra of $\dk_n$ is isomorphic to the associated graded of the group algebra of the $n$-strand pure braid group with respect to the powers of the augmentation ideal \cite{Kohno}.
See also \cite[Theorem 10.0.4]{FresseI}.
With this in mind, we introduce a variant of $\dk_n$ corresponding to the group $P_{m,n}$.

\begin{dfn}
For $m,n \ge 0$, let $\dk_{m,n}$ be the graded Lie algebra generated by degree one elements $a_{ij}$ for $1 \le i \le m$, $1\le j \le n$ and $c_{ij} = c_{ji}$ for $1 \le i \neq j \le n$, subject to the commutation and 4T relations among them, where we regard $a_{ij}= t_{i(m+j)}$ and $c_{ij} = t_{(m+i)(m+j)}$ as the corresponding generators of $\dk_{m+n}$.
\end{dfn}

Diagrammatically, the generators of $\dk_{m,n}$ are horizontal chord diagrams with a single chord on $m$ vertical red lines and $n$ vertical blue lines:
\[
a_{ij} = 
\begin{tikzpicture}[baseline=18pt]
\draw[rs] (0,0) -- (0,4);
\draw[rs] (2,0) -- (2,4);
\draw[rs] (4,0) -- (4,4);
\draw (2,2) -- (7,2);
\draw[bs] (5,0) -- (5,4);
\draw[bs] (7,0) -- (7,4);
\draw[bs] (9,0) -- (9,4);
\node at (1,0.5) {$\cdots$};
\node at (3,0.5) {$\cdots$};
\node at (6,0.5) {$\cdots$};
\node at (8,0.5) {$\cdots$};
\node at (0,-0.5) {\small $1$};
\node at (2,-0.5) {\small $i$};
\node at (4,-0.5) {\small $m$};
\node at (5,-0.5) {\small $1$};
\node at (7,-0.5) {\small $j$};
\node at (9,-0.5) {\small $n$};
\end{tikzpicture},
\qquad
c_{ij} = 
\begin{tikzpicture}[baseline=18pt]
\draw[rs] (0,0) --(0,4);
\draw[rs] (2,0) --(2,4);
\draw[bs] (3,0) --(3,4);
\draw[bs] (5,0) --(5,4);
\draw[bs] (7,0) --(7,4);
\draw[bs] (9,0) --(9,4);
\draw (5,2) -- (7,2); 
\node at (1,0.5) {$\cdots$};
\node at (4,0.5) {$\cdots$};
\node at (6,0.5) {$\cdots$};
\node at (8,0.5) {$\cdots$};
\node at (0,-0.5) {\small $1$};
\node at (2,-0.5) {\small $m$};
\node at (3,-0.5) {\small $1$};
\node at (5,-0.5) {\small $i$};
\node at (7,-0.5) {\small $j$};
\node at (9,-0.5) {\small $n$};
\end{tikzpicture}.
\]

\begin{remark}
We have $\dk_{0,n} = \dk_n$.
\end{remark}

The semi-direct product decomposition \eqref{eq:dk_decomp} generalizes to $\dk_{m,n}$:

\begin{lem} \label{lem:dk_decomp}
There is a semi-direct product decomposition of Lie algebra
\[
\dk_{m,n} = \dk_{m,n-1} \ltimes
\lie(a_{1n},\ldots,a_{mn},c_{1n},\ldots,c_{(n-1)n}).
\]
\end{lem}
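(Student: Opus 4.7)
The plan is to restrict the classical semi-direct product \eqref{eq:dk_decomp} (with $n$ replaced by $m+n$) to the Lie subalgebra $\dk_{m,n} \subseteq \dk_{m+n}$. Set $\mathfrak{n} := \lie(t_{1(m+n)},\ldots,t_{(m+n-1)(m+n)})$, so that $\dk_{m+n} = \dk_{m+n-1} \ltimes \mathfrak{n}$ with $\mathfrak{n}$ a free ideal, and set $\mathfrak{k} := \lie(a_{1n},\ldots,a_{mn},c_{1n},\ldots,c_{(n-1)n}) \subseteq \dk_{m,n}$.

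The pivotal observation is that $\mathfrak{k} = \mathfrak{n}$ as subalgebras of $\dk_{m+n}$. By direct inspection, every generator $t_{i(m+n)}$ of $\mathfrak{n}$ (with $1 \le i \le m+n-1$) is already among the generators of $\mathfrak{k}$ under the identifications $a_{in} = t_{i(m+n)}$ (for $i \le m$) and $c_{jn} = t_{(m+j)(m+n)}$ (for $i = m+j$, $1 \le j \le n-1$). Hence $\mathfrak{n} \subseteq \mathfrak{k}$, and the reverse inclusion is trivial. Freeness of $\mathfrak{k}$ on the $m+n-1$ listed generators is then inherited from that of $\mathfrak{n}$, and $\mathfrak{k}$ is an ideal of $\dk_{m,n}$ because it is already an ideal of the ambient $\dk_{m+n}$.

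It then remains to show $\dk_{m,n} = \dk_{m,n-1} \oplus \mathfrak{k}$ as vector spaces. The intersection $\dk_{m,n-1} \cap \mathfrak{k}$ is trivial, since $\dk_{m,n-1}$ is contained in $\dk_{m+n-1}$ (all its generators have indices $\le m+n-1$) and $\dk_{m+n-1} \cap \mathfrak{n} = 0$ by the classical decomposition. For the spanning, note that $\dk_{m,n-1} + \mathfrak{k}$ is a Lie subalgebra of $\dk_{m,n}$ (being the sum of a subalgebra and an ideal); it contains every generator of $\dk_{m,n}$, since the $a_{ij}, c_{ij}$ with $j < n$ (resp.\ $i, j < n$) lie in $\dk_{m,n-1}$, while the remaining generators $a_{in}, c_{jn}$ lie in $\mathfrak{k}$ by construction.

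The only delicate point is the pivotal observation $\mathfrak{k} = \mathfrak{n}$, which rests on the combinatorial completeness of the generator set $\{a_{in}\} \cup \{c_{jn}\}$: between them they account for every index pair $(i, m+n)$ with $i < m+n$. Once this is noticed, the remaining parts — ideal property, freeness, trivial intersection, and splitting — are inherited essentially tautologically from the $\dk_{m+n}$ decomposition, requiring no further combinatorial work.
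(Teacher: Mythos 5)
Your argument has a circularity that makes it incomplete as written. You treat $\dk_{m,n}$ (and $\dk_{m,n-1}$) as Lie subalgebras of $\dk_{m+n}$ from the outset, writing $\mathfrak{k}\subseteq\dk_{m,n}\subseteq\dk_{m+n}$ and then intersecting with the ideal $\mathfrak{n}$ inside $\dk_{m+n}$. But $\dk_{m,n}$ is \emph{defined} by generators and relations, and a priori the natural map $\dk_{m,n}\to\dk_{m+n}$, $a_{ij}\mapsto t_{i(m+j)}$, $c_{ij}\mapsto t_{(m+i)(m+j)}$, is only a homomorphism: the images could in principle satisfy relations in $\dk_{m+n}$ that are not consequences of the listed commutation and 4T relations. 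In the paper, the injectivity of this map is precisely what is \emph{deduced} from Lemma~\ref{lem:dk_decomp} by induction (see Remark~\ref{rem:dkmninj}), so it cannot be used as an input to the proof of the lemma. The same issue affects your use of $\dk_{m,n-1}\subseteq\dk_{m+n-1}$ and of $\dk_{m,n-1}$ as a subalgebra of $\dk_{m,n}$. What your argument actually establishes is the semidirect decomposition of the \emph{image} of $\dk_{m,n}$ in $\dk_{m+n}$ --- and your identification of the generators $a_{1n},\ldots,a_{mn},c_{1n},\ldots,c_{(n-1)n}$ with $t_{1(m+n)},\dots,t_{(m+n-1)(m+n)}$ is indeed the right combinatorial observation --- but it does not yet say anything about the presented Lie algebra $\dk_{m,n}$ itself.

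To close the gap one has to work intrinsically, as the paper does: define an action $\rho$ of $\dk_{m,n-1}$ on the free Lie algebra $\lie(a_{1n},\ldots,a_{mn},c_{1n},\ldots,c_{(1n-1)n})$ by explicit formulas on generators (forced by the 4T and commutation relations), form the abstract semidirect product $\dk_{m,n-1}\ltimes\lie(a_{1n},\ldots,a_{mn},c_{1n},\ldots,c_{(n-1)n})$, and check that the evident assignments on generators give well-defined, mutually inverse Lie homomorphisms between it and $\dk_{m,n}$; the well-definedness of the map out of $\dk_{m,n}$ is exactly where the defining relations must be verified. Only after this is done does one obtain, inductively, the embedding $\dk_{m,n}\hookrightarrow\dk_{m+n}$ on which your proof relies.
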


\begin{proof}
We simply write $\lie(a,c) = \lie(a_{1n},\ldots,a_{mn},c_{1n},\ldots,c_{(n-1)n})$.
First we describe the Lie action $\rho$ of $\dk_{m,n-1}$ on $\lie(a,c)$ that is used in forming the semi-direct product $\dk_{m,n-1} \ltimes \lie(a,c)$.
It is specified by the value on generators of $\dk_{m,n-1}$:
for $1 \le i \le m$, $1\le j \le n-1$, $1\le k \le m$ and $1 \le l \le n-1$, 
\[
\rho(a_{ij}) (a_{kn}) = \begin{cases}
0 & (i \neq k) \\
-[c_{jn}, a_{kn}] & (i=k)
\end{cases},
\quad
\rho(a_{ij})(c_{ln}) = \begin{cases}
0 & (j \neq l) \\
-[a_{in}, c_{ln}] & (j = l)
\end{cases},
\]
and for $1 \le i \neq j \le n-1$, $1 \le k \le m$ and $1 \le l \le n-1$, 
\[
\rho(c_{ij})(a_{kn}) = 0,
\qquad
\rho(c_{ij})(c_{ln}) = 
\begin{cases}
0 & ( l \notin \{ i,j \}) \\
-[c_{jn}, c_{ln}] & (i = l)
\end{cases}.
\]
Note that these formulas are compatible with the Lie bracket in $\dk_{m,n}$. 
For example, we have $[a_{kj}, a_{kn}] = - [c_{jn}, a_{kn}]$ by the 4T relation, and this matches the value $\rho(a_{kj})(a_{kn}) = -[c_{jn},a_{kn}]$.
Now we define the map
$\dk_{m,n} \to \dk_{m,n-1} \ltimes \lie(a,c)$ by 
\[
a_{ij} \mapsto \begin{cases}
(a_{ij}, 0) & (j\le n-1) \\
(0, a_{in}) & (j = n)
\end{cases},
\qquad
c_{ij} \mapsto \begin{cases}
(c_{ij},0) & (j \le n-1) \\
(0, c_{in}) & (j = n)
\end{cases}.
\]
Then one can check that this map is a Lie algebra isomorphism.
\end{proof}

\begin{remark} \label{rem:dkmninj}
By Lemma~\ref{lem:dk_decomp}, we inductively see that the map $\dk_{m,n} \rightarrow \dk_{m+n}$ defined by $a_{ij} \mapsto t_{i(m+j)}$ and $c_{ij} \mapsto t_{(m+i)(m+j)}$ is an injective Lie homomorphism.
Therefore, one can identify $\dk_{m,n}$ with the Lie subalgebra of $\dk_{m+n}$ generated by $t_{i(m+j)}$ ($1\le i \le m$, $1\le j \le n$) and $t_{(m+i)(m+j)}$ ($1 \le i < j \le n$).
\end{remark}

We show how the Lie algebra $\dk_{m,n}$ and the group $P_{m,n}$ are related.
On the one hand, let $\mathcal{A}_{m,n} = U(\dk_{m,n})$ be the universal enveloping algebra of $\dk_{m,n}$.
It is an associative $\Q$-algebra generated by the same generators $a_{ij}$ and $c_{ij}$ as those of $\dk_{m,n}$, subject to the same relations as those of $\dk_{m,n}$, where we regard bracket symbol as commutator: $[a,b] = ab - ba$. 
On the other hand, the powers of the augmentation ideal $I = I P_{m,n}$ define a decreasing filtration of $\Q P_{m,n}$.
Thus one can construct the associated graded $\mathcal{A}_{\Q P_{m,n}}:= \prod_{k \ge 0} I^k/ I^{k+1}$ of the filtered algebra $\Q P_{m,n}$.

\begin{prop} \label{prop:grpureemb}
There is a canonical isomorphism of graded $\Q$-algebras $\mathcal{A}_{\Q P_{m,n}} \cong \mathcal{A}_{m,n}$, through which the class of $\alpha_{ij} - 1$ corresponds to $a_{ij}$ and the class of $\tau_{ij} - 1$ to $c_{ij}$.
\end{prop}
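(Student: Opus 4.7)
The plan is to establish the isomorphism by leveraging the classical Kohno theorem $\gr \Q P_N \cong U(\dk_N) = \calA_N$, together with the identification of $\dk_{m,n}$ as a Lie subalgebra of $\dk_{m+n}$ provided by Remark~\ref{rem:dkmninj}. The geometric input is that $P_{m,n}$ embeds into $P_{m+n}$ as the kernel of the ``erase the blue strands'' homomorphism $P_{m+n} \twoheadrightarrow P_m$ (which splits by adjoining $n$ trivial blue strands on the right), and that under this embedding Lambropoulou's generators $\alpha_{ij}, \tau_{ij}$ correspond to the standard pure-braid generators $A_{i,m+j}, A_{m+i,m+j}$ of $P_{m+n}$.

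First I would construct a graded algebra homomorphism $\Phi: \calA_{m,n} \to \calA_{\Q P_{m,n}}$ by setting $a_{ij} \mapsto [\alpha_{ij}-1]$ and $c_{ij} \mapsto [\tau_{ij}-1]$ in $I/I^2$. Well-definedness amounts to checking the commutation and 4T relations in $\calA_{\Q P_{m,n}}$; these hold because they hold in $\calA_{\Q P_{m+n}}$ by Kohno's theorem, and the relevant identities inside the group algebra are witnessed by elements lying in the subgroup $\Q P_{m,n}$ itself. Surjectivity of $\Phi$ is immediate from Lambropoulou's presentation, which shows $P_{m,n}$ is generated by the $\alpha_{ij}$ and $\tau_{ij}$, so that products of the degree-one classes $[\alpha_{ij}-1], [\tau_{ij}-1]$ span $I^k/I^{k+1}$ for every $k$.

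For injectivity I would exploit the split short exact sequence $1 \to P_{m,n} \to P_{m+n} \to P_m \to 1$ together with its Lie-algebra counterpart $\dk_{m+n} = \dk_m \ltimes \dk_{m,n}$ (the fact that $\dk_{m,n}$ is a Lie ideal in $\dk_{m+n}$ with quotient $\dk_m$ follows from Remark~\ref{rem:dkmninj} and the 4T relations). These compatible semidirect-product decompositions should force the augmentation filtration on $\Q P_{m,n}$ to coincide with the restriction of the augmentation filtration on $\Q P_{m+n}$. Passing to the associated graded then yields an injection $\calA_{\Q P_{m,n}} \hookrightarrow \calA_{\Q P_{m+n}} \cong U(\dk_{m+n})$ whose image, by inspection of the images of the generators, equals $U(\dk_{m,n}) = \calA_{m,n}$; combined with the surjectivity of $\Phi$, this produces the desired isomorphism.

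The main technical obstacle will be the last step, namely rigorously showing that the subgroup inclusion $\Q P_{m,n} \subset \Q P_{m+n}$ is strictly filtered with respect to the augmentation ideals. If a direct argument via the two compatible splittings proves cumbersome, the backup strategy is induction on $n$ using the Artin-type fibration $P_{m,n} \twoheadrightarrow P_{m,n-1}$ with free fiber of rank $m+n-1$ (generated by $\alpha_{in}$ and $\tau_{in}$), mirrored by Lemma~\ref{lem:dk_decomp}, with Magnus's theorem controlling the fiber's group algebra and the inductive hypothesis controlling the base.
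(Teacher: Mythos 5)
Your backup strategy is, in essence, the paper's actual proof: it proceeds by induction on $n$ using Lambropoulou's decomposition $P_{m,n}\cong P_{m,n-1}\ltimes F_{m+n-1}$ (free fiber generated by the $\alpha_{in}$ and $\tau_{in}$), the identification $\mathcal{A}_{\Q F_{m+n-1}}\cong\ass_{m+n-1}$, and the matching Lie-algebra decomposition of Lemma~\ref{lem:dk_decomp}, with the base and fiber glued by \cite[Proposition~8.5.7]{FresseI} on semi-direct products of Hopf algebras. Your primary route --- embedding $\Q P_{m,n}$ into $\Q P_{m+n}$ and pulling the result back from Kohno's theorem --- is genuinely different, but its pivotal claim is not free: the strictness $I^k P_{m+n}\cap \Q P_{m,n}=I^k P_{m,n}$ is \emph{not} forced by the mere existence of compatible semidirect-product splittings, since for a general split extension $1\to H\to G\to Q\to 1$ the inclusion $I^kH\subseteq I^kG\cap \Q H$ can be proper. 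What saves the day here is that $P_{m+n}=P_m\ltimes P_{m,n}$ is an \emph{almost direct} product (the conjugation action of $P_m$ is trivial on $H_1(P_{m,n};\Q)$, because the conjugating elements in the standard pure-braid conjugation formulas all lie in $P_{m,n}$) --- which is exactly the hypothesis the paper verifies, for the fibration over $P_{m,n-1}$, before invoking Fresse's proposition. Establishing strictness from that hypothesis is an argument of essentially the same weight as the inductive proof itself, so the primary route does not actually shorten anything; and note that your well-definedness check for $\Phi$ should be carried out directly from Lambropoulou's relations inside $\Q P_{m,n}$ (as you seem to intend), since deducing the 4T congruences from their validity in $I^3P_{m+n}$ would again presuppose the strictness you are trying to prove. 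In short: promote your backup to the main argument, stating explicitly the triviality of the action on the abelianization of the fiber, and you have the paper's proof.
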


\begin{proof}
The proof is similar to the proof of the isomorphism $\mathcal{A}_{\Q P_n} \cong U(\dk_n)$ given in \cite[Theorem 10.0.4]{FresseI}, so we just give a sketch.
We start with the fact that there is a semi-direct product decomposition
\[
P_{m,n} \cong P_{m,n-1} \ltimes F_{m+n-1},
\]
where $F_{m+n-1}$ is the free group generated by $\alpha_{in}$, $1\le i \le m$ and $\tau_{in}$, $1 \le i \le n-1$
(see \cite[Section 3]{Lam00}).
Here, the action of $P_{m,n-1}$ on $F_{m+n-1}$ is by conjugation and hence is trivial on the abelianization of $F_{m+n-1}$.
Applying \cite[Proposition 8.5.7]{FresseI}, one has $\mathcal{A}_{\Q P_{m,n}} \cong 
\left( \mathcal{A}_{\Q P_{m,n-1}} \right) \sharp \left( \mathcal{A}_{\Q F_{m+n-1}} \right)$, where $\sharp$ denotes the semi-direct product of Hopf algebras.
Note that $\mathcal{A}_{\Q F_{m+n-1}}$ is naturally isomorphic to $\ass_{m+n-1}$, and Lemma~\ref{lem:dk_decomp} implies that there is an isomorphism $\mathcal{A}_{m,n-1} \sharp \ass_{m+n-1} \cong \mathcal{A}_{m,n}$.
Hence we can prove $\mathcal{A}_{\Q P_{m,n}} \cong \mathcal{A}_{m,n}$ by induction on $n$.
One can check that this isomorphism maps the class of $\alpha_{ij} -1$ to $a_{ij}$ and the class of $\tau_{ij} - 1$ to $c_{ij}$.
\end{proof}

\subsection{Operadic structure and coface maps} \label{subsec:cfc_mps}

There are naturally defined operations on mixed braids.
Let $\beta \in B_{m,n}$ be a mixed braid.
\begin{itemize}
\item
Extension operations.
We denote by $\delta_0^p (\beta)$ (resp. $\delta_0^s (\beta)$) be the mixed braid of type $(m+1,n)$ (resp. of type $(m, n+1)$) obtained from $\beta$ by adding a red (resp. blue) straight strand on the left.
Similarly, we define $\delta_{m+n+1}^p (\beta)$ (resp. $\delta_{m+n+1}^s (\beta)$) by adding a red strand (resp. blue strand) on the right.
For example, 
\[
\delta_0^p \left( \, 
\begin{tikzpicture}[baseline=18pt, x=2.4mm, y=1.8mm]
\draw[rs] (0,0) -- (0,6);
\pc{rs}{bs}{0,6}
\pc{rs}{bs}{1,0}
\nc{rs}{bs}{2,2}
\pc{bs}{bs}{1,4}
\nc{bs}{rs}{2,6}
\draw[bs] (3,0) -- (3,2);
\draw[bs] (1,2) -- (1,4);
\draw[rs] (3,4) -- (3,6);
\end{tikzpicture}
\, \right)
= \begin{tikzpicture}[baseline=18pt, x=2.4mm, y=1.8mm]
\draw[rs] (-1,0) -- (-1,8);
\draw[rs] (0,0) -- (0,6);
\pc{rs}{bs}{0,6}
\pc{rs}{bs}{1,0}
\nc{rs}{bs}{2,2}
\pc{bs}{bs}{1,4}
\nc{bs}{rs}{2,6}
\draw[bs] (3,0) -- (3,2);
\draw[bs] (1,2) -- (1,4);
\draw[rs] (3,4) -- (3,6);
\end{tikzpicture}
\qquad \text{and} \qquad 
\delta_5^s \left( \, 
\begin{tikzpicture}[baseline=18pt, x=2.4mm, y=1.8mm]
\draw[rs] (0,0) -- (0,6);
\pc{rs}{bs}{0,6}
\pc{rs}{bs}{1,0}
\nc{rs}{bs}{2,2}
\pc{bs}{bs}{1,4}
\nc{bs}{rs}{2,6}
\draw[bs] (3,0) -- (3,2);
\draw[bs] (1,2) -- (1,4);
\draw[rs] (3,4) -- (3,6);
\end{tikzpicture}
\, \right)
= \begin{tikzpicture}[baseline=18pt, x=2.4mm, y=1.8mm]
\draw[rs] (0,0) -- (0,6);
\pc{rs}{bs}{0,6}
\pc{rs}{bs}{1,0}
\nc{rs}{bs}{2,2}
\pc{bs}{bs}{1,4}
\nc{bs}{rs}{2,6}
\draw[bs] (3,0) -- (3,2);
\draw[bs] (1,2) -- (1,4);
\draw[rs] (3,4) -- (3,6);
\draw[bs] (4,0) -- (4,8);
\end{tikzpicture}
\, .
\]

\item
Cabling operations. 
For $1 \le i \le m+n$, let $\delta_i (\beta)$ be the mixed braid obtained from $\beta$ by doubling its $i$th strand, where we count strands at the bottom end of $\beta$.
The two strands newly created inherits the color of the original strand.
For example, 
\[
\delta_1\left( \, 
\begin{tikzpicture}[baseline=18pt, x=2.4mm, y=1.8mm]
\draw[rs] (0,0) -- (0,6);
\pc{rs}{bs}{0,6}
\pc{rs}{bs}{1,0}
\nc{rs}{bs}{2,2}
\pc{bs}{bs}{1,4}
\nc{bs}{rs}{2,6}
\draw[bs] (3,0) -- (3,2);
\draw[bs] (1,2) -- (1,4);
\draw[rs] (3,4) -- (3,6);
\end{tikzpicture}
\, \right)
= 
\begin{tikzpicture}[baseline=22pt, x=2.4mm, y=1.8mm]
\draw[rs] (-1,0) -- (-1,8);
\draw[rs] (1,8) -- (1,10);
\draw[rs] (2,8) -- (2,10);
\draw[bs] (3,8) -- (3,10);
\pc{rs}{bs}{-1,8};
\draw[rs] (0,0) -- (0,6);
\pc{rs}{bs}{0,6}
\pc{rs}{bs}{1,0}
\nc{rs}{bs}{2,2}
\pc{bs}{bs}{1,4}
\nc{bs}{rs}{2,6}
\draw[bs] (3,0) -- (3,2);
\draw[bs] (1,2) -- (1,4);
\draw[rs] (3,4) -- (3,6);
\end{tikzpicture}
\qquad \text{and} \qquad 
\delta_4 \left( \, 
\begin{tikzpicture}[baseline=18pt, x=2.4mm, y=1.8mm]
\draw[rs] (0,0) -- (0,6);
\pc{rs}{bs}{0,6}
\pc{rs}{bs}{1,0}
\nc{rs}{bs}{2,2}
\pc{bs}{bs}{1,4}
\nc{bs}{rs}{2,6}
\draw[bs] (3,0) -- (3,2);
\draw[bs] (1,2) -- (1,4);
\draw[rs] (3,4) -- (3,6);
\end{tikzpicture}
\, \right)
= \begin{tikzpicture}[baseline=26pt, x=2.4mm, y=1.8mm]
\draw[rs] (0,0) -- (0,8);
\pc{rs}{bs}{0,8}
\pc{rs}{bs}{1,0}
\nc{rs}{bs}{2,2}
\pc{bs}{bs}{1,4}
\nc{rs}{bs}{3,4}
\pc{bs}{bs}{2,6}
\pc{rs}{bs}{1,10}
\nc{bs}{rs}{3,10}
\draw[bs] (3,0) -- (3,2);
\draw[bs] (4,0) -- (4,2);
\draw[bs] (1,2) -- (1,4);
\draw[bs] (4,2) -- (4,4);
\draw[bs] (1,6) -- (1,8);
\draw[rs] (4,6) -- (4,8);
\draw[bs] (2,8) -- (2,10);
\draw[bs] (3,8) -- (3,10);
\draw[rs] (4,8) -- (4,10);
\draw[bs] (0,10) -- (0,12);
\end{tikzpicture}.
\]

\item
Changing a pole to a strand.
For $1 \le i \le m$, let $\vartheta_i(\beta)$ be the mixed braid obtained from $\beta$ by changing the $i$th red strand to a blue strand. 
For example, 
\[
\vartheta_1 \left( \, 
\begin{tikzpicture}[baseline=18pt, x=2.4mm, y=1.8mm]
\draw[rs] (0,0) -- (0,6);
\pc{rs}{bs}{0,6}
\pc{rs}{bs}{1,0}
\nc{rs}{bs}{2,2}
\pc{bs}{bs}{1,4}
\nc{bs}{rs}{2,6}
\draw[bs] (3,0) -- (3,2);
\draw[bs] (1,2) -- (1,4);
\draw[rs] (3,4) -- (3,6);
\end{tikzpicture}
\, \right)
= 
\begin{tikzpicture}[baseline=18pt, x=2.4mm, y=1.8mm]
\draw[bs] (0,0) -- (0,6);
\pc{bs}{bs}{0,6}
\pc{rs}{bs}{1,0}
\nc{rs}{bs}{2,2}
\pc{bs}{bs}{1,4}
\nc{bs}{rs}{2,6}
\draw[bs] (3,0) -- (3,2);
\draw[bs] (1,2) -- (1,4);
\draw[rs] (3,4) -- (3,6);
\end{tikzpicture}
\, .
\]
\end{itemize}

The operations defined above have counterparts in $\dk_{m,n}$.

\begin{itemize}
\item 
Extension operations.
Let $\delta_0 = \delta_0^p : \dk_{m,n} \to \dk_{m+1,n}$ (resp. $\delta_{m+n+1} = \delta_{m+n+1}^s : \dk_{m,n} \to \dk_{m,n+1}$) be the Lie homomorphism defined by 
$a_{ij} \mapsto a_{(i+1)j}$ and $c_{ij} \mapsto c_{ij}$ (resp. $a_{ij} \mapsto a_{ij}$ and $c_{ij} \mapsto c_{ij}$).

\item 
Cabling operations.
For $1 \le k \le m$, we define the Lie homomorphism $\delta_k : \dk_{m,n} \to \dk_{m+1,n}$ by 
\[
\delta_k(a_{ij}) = \begin{cases}
a_{ij} & ( 1\le i \le k-1) \\
a_{kj} + a_{(k+1)j} & ( i = k) \\
a_{(i+1)j} & ( k+1 \le i \le m)
\end{cases},
\qquad
\delta_k(c_{ij}) = c_{ij}.
\]
For $1 \le k \le n$, we define $\delta_{m + k} : \dk_{m,n} \to \dk_{m,n+1}$ by
\[
\delta_{m+k}(a_{ij}) = \begin{cases}
a_{ij} & ( 1\le j \le k-1) \\
a_{ik} + a_{i(k+1)} & ( j = k) \\
a_{i(j+1)} & (k+1 \le j \le n) 
\end{cases}
\]
and 
\[
\delta_{m+k}(c_{ij}) = \begin{cases}
c_{ij} & (j < k) \\
c_{ik} + c_{i(k+1)} & ( j=k) \\
c_{i(j+1)} & ( i < k < j) \\
c_{k(j+1)} + c_{(k+1)(j+1)} & (i = k) \\
c_{(i+1)(j+1)} & ( k < i)
\end{cases}.
\]

\item 
Changing a pole to a strand.
For the sake of simplicity we only introduce this operation applied to the last pole.
Let $\vartheta_m : \dk_{m,n} \to \dk_{m-1, n+1}$ be the Lie homomorphism defined by 
\[
\vartheta_m(a_{ij}) = \begin{cases}
a_{i(j+1)} & ( i < m) \\
c_{1(j+1)} & ( i = m)
\end{cases},
\qquad
\vartheta_m(c_{ij}) = c_{(i+1)(j+1)}.
\]
\end{itemize}

Using these operations, we define coface maps and a differential on $\dk_{m,n}$.

\begin{dfn}
For $0 \le k \le m+n+1$, we define the map $d_k = d_k^{m,n} : \dk_{m,n} \to \dk_{m,n+1}$ as follows:
\[
d_k = \begin{cases}
\vartheta_{m+1} \circ \delta_k & (0 \le k \le m) \\ 
\delta_k & (m+1 \le k \le m+n+1)
\end{cases}.
\] 
Furthermore, we set $d^{m,n} := \sum_{k=0}^{m+n+1} (-1)^k d_k : \dk_{m,n} \to \dk_{m,n+1}$.
\end{dfn}

The family of maps $\{d^{m,n} \}_n$ is indeed a differential.

\begin{lem}
We have $d^{m,n+1} \circ d^{m,n} = 0 : \dk_{m,n} \to \dk_{m,n+2}$.
\end{lem}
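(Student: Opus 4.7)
The identity $d^{m,n+1} \circ d^{m,n} = 0$ reduces by the standard alternating-sum argument to verifying the cosimplicial relations
\[
d_j^{m,n+1} \circ d_i^{m,n} \;=\; d_i^{m,n+1} \circ d_{j-1}^{m,n}
\qquad (0 \le i < j \le m+n+2).
\]
Once these hold, expanding $d^{m,n+1} \circ d^{m,n} = \sum_{i,j}(-1)^{i+j}\, d_j d_i$ and applying the identity in the range $i < j$ pairs each term with one of opposite sign via the involution $(i,j) \leftrightarrow (j-1,i)$, so the total sum vanishes.

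I would verify the cosimplicial relations on the generators $a_{ij}$ and $c_{ij}$ (each $d_k$ is a Lie homomorphism, so this suffices) by case analysis on the positions of $i$ and $j$ relative to $m$. Case (a), $m+1 \le i < j$: here $d_k = \delta_k$ on both sides and the identity is the classical cosimplicial relation for strand cabling/extension on the Drinfeld--Kohno Lie algebra, which follows directly from the formulas defining $\delta_k$ on the $a$- and $c$-generators. Case (b), $i \le m < j$: expand $d_i = \vartheta_{m+1} \circ \delta_i$ and observe that $\delta_j$, which acts on strand $j-m$, commutes with $\vartheta_{m+1}$, which acts on the last pole, since these operations modify disjoint parts of the index set; the identity then reduces to a pair of one-sided cabling computations.

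The main obstacle is case (c), $0 \le i < j \le m$. Here both $d_i$ and $d_j$ contain a factor $\vartheta_{m+1}$, so each side of the desired identity is a four-term composition $\delta, \vartheta, \delta, \vartheta$ that must be rearranged. The key commutation lemma is $\delta_j \circ \vartheta_{m+1} = \vartheta_{m+2} \circ \delta_j$ for $j \le m$ (pole cabling at position $\le m$ is independent of the conversion of the last pole to a strand), together with the trivial commutation of two $\vartheta$'s that convert distinct poles. Once these are checked on generators, case (c) reduces to the pure pole-cabling identity $\delta_j \circ \delta_i = \delta_i \circ \delta_{j-1}$ in $\dk_{m+2,n}$, which is standard. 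The bulk of the work is thus a finite but somewhat tedious enumeration verifying the two commutation lemmas on every generator type; a cleaner alternative I would pursue in parallel is to realize all $d_k$ as restrictions of cabling operations on $\dk_{m+n+1}$ via the inclusion of Remark~\ref{rem:dkmninj}, and to deduce the identities from the well-known cosimplicial structure on $\{\dk_N\}_N$.
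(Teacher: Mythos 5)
Your proposal is correct and follows essentially the same route as the paper: the paper's proof consists of exactly the cosimplicial relation you state (written there as $d_i \circ d_j = d_{j+1} \circ d_i$ for $i \le j$, which is your identity after reindexing), checked directly on generators, followed by the standard alternating-sum cancellation. Your write-up simply fleshes out the case analysis that the paper leaves as a direct verification.
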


\begin{proof}
The proof is straightforward by using the relation $d_i \circ d_j = d_{j+1} \circ d_i$ for $i \le j$, which can be checked directly.
\end{proof}

\section{Emergent braids and chord diagrams} \label{sec:ebcd}

In this section, we introduce the notion of emergent braids and chord diagrams.
In particular, we describe the structure of the emergent version of the Drinfeld-Kohno Lie algebra.

\subsection{Emergent braids} \label{subsec:eb}

The group $B^\std_{m,n}$ acts on its normal subgroup $P_{m,n}$ by conjugation, and this extends linearly to an action on the group algebra $\Q P_{m,n}$.
We denote by $J$ the two-sided ideal of $\Q P_{m,n}$ generated by $\tau_{ij} - 1$, $1 \le i<j \le n$.
The powers $J^l$, $l \ge 0$, define a $B^\std_{m,n}$-invariant decreasing filtration of $\Q P_{m,n}$.

\begin{dfn} \label{dfn:emb}
For each $k \ge 1$ we set $\Q P_{m,n}^{/k}:= \Q P_{m,n} / J^k$.
In particular, the {\it algebra of emergent pure braids} of type $(m,n)$ is defined to be
\[
\Q P_{m,n}^\EM:= \Q P_{m,n}^{/2} = \Q P_{m,n}/J^2.
\]
\end{dfn}

\begin{remark} \label{rem:why}
Why ``emergent''?
In primary school language, ``Dror has an emergent knowledge of the French language'' means ``Dror knows French just a bit better than nothing at all''.
In a similar way, $\Q P_{m,n}^{/1}$ means ``no braiding phenomenon yet'', for in $\Q P_{m,n}^{/1}$ the blue strands are fully transparent to each other, and $\Q P_{m,n}^{/2}$ is ``emergent braiding'', for after moding out by $J^2$ just a whiff of braiding remains.
\end{remark}

The ideal $J$ of $\Q P_{m,n} = {\bf MB}(o^\std_{m,n}, o^\std_{m,n})_{1_{m,n}}$ and its powers extend to a multiplicative filtration of the category ${\bf MB}$ in the following way.
Let $o, o' \in W_{m,n}$ for some $m, n \ge 0$ and let $\sigma \in \mathfrak{S}_{\rb \bb}(o,o')$.
One can take mixed braids $\beta \in B_{m,n}(o^\std_{m,n}, o)$ and $\beta' \in B_{m,n}(o^\std_{m,n}, o')$ such that $\sigma = \pi(\beta)^{-1} \pi(\beta')$.
Then, the map 
$
\Q P_{m,n} \to {\bf MB}(o,o')_{\sigma}, 
u \mapsto \beta^{-1} u \beta'
$
is a $\Q$-linear isomorphism.
Since the ideal $J$ is $B^\std_{m,n}$-invariant, it follows that the subspaces $J^l_\sigma := \beta^{-1} J^l \beta'$, $l\ge 0$, are independent of the choice of $\beta$ and $\beta'$.
The collection $\{ J^l_{\sigma} \}_{l \ge 0, \sigma \in \mathfrak{S}_{\rb \bb}}$ is multiplicative in the sense that $J^l_{\sigma} \cdot J^{l'}_{\sigma'} \subset J^{l+l'}_{\sigma \sigma'}$ holds for any $l, l' \ge 0$ whenever $\sigma$ and $\sigma'$ are composable.

For each $k\ge 1$, we define the category ${\bf MB}^{/k}$ as follows.
The set of objects is $W_{\rb \bb}$.
For $o, o' \in W_{\rb \bb}$, the set of morphisms from $o$ to $o'$ is
\[
{\bf MB}^{/k}(o, o') := \begin{cases}
\bigsqcup_{\sigma \in \mathfrak{S}_{\rb \bb}(o,o')} \displaystyle\frac{{\bf MB}(o,o')_{\sigma}}{J^k_{\sigma}} & \text{if $o$ and $o'$ have the same type}, \\
\emptyset & \text{otherwise}.
\end{cases}
\]
The composition in ${\bf MB}^{/k}$ is induced from the composition in ${\bf MB}$.
Our main focus is on the case $k=2$: we set 
${\bf EB} := {\bf MB}^{/2}$.

\subsection{Emergent version of the Drinfeld-Kohno Lie algebra} \label{subsec:edk}

Let ${\sf c} = {\sf c}_{m,n}$ be the Lie ideal of $\dk_{m,n}$ generated by $c_{ij}$ for $1 \le i\neq j \le n$.

\begin{dfn}
The {\it emergent Drinfeld-Kohno Lie algebra} of type $(m,n)$ is the quotient Lie algebra
$
\edk_{m,n} := \dk_{m,n}/[{\sf c}, {\sf c}]
$.
\end{dfn}

\begin{remark}
Similarly, for each $k\ge 1$ one can define the quotient Lie algebra $\dk_{m,n}^{/k} := \dk_{m,n}/{\sf c}^{(k)}$, where ${\sf c}^{(k)}$ is the Lie ideal of $\dk_{m,n}$ inductively defined by ${\sf c}^{(1)} = {\sf c}$ and ${\sf c}^{(k)} = [{\sf c}^{(k-1)}, {\sf c}]$.
One has $\edk_{m,n} = \dk_{m,n}^{/2}$.
\end{remark}

In what follows we describe the structure of $\edk_{m,n}$.

\begin{lem} \label{lem:edk_direct}
We have a $\Q$-linear graded direct sum decomposition
\[
\edk_{m,n} \cong
\edk_{m,n-1} \oplus
\left( \lie(a_{1n},\ldots,a_{mn}) \oplus \bigoplus_{i=1}^{n-1} \ass(a_{1n},\ldots,a_{mn})[-1] \right).
\]
Here, $\ass(a_{1n},\ldots,a_{mn})[-1]$ is the degree shift of $\ass(a_{1n},\ldots,a_{mn})$ by $-1$: the constant term has degree $1$, the generators $x_1,\ldots, x_m$ have degree $2$, and so on.
\end{lem}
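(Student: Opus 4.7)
The plan is to exploit the top-level semidirect product decomposition from Lemma~\ref{lem:dk_decomp}. Write $\dk_{m,n} = \dk_{m,n-1} \ltimes {\sf n}$ with ${\sf n} = \lie(a_{1n},\ldots,a_{mn},c_{1n},\ldots,c_{(n-1)n})$, and let ${\sf c}_n \subset {\sf n}$ denote the Lie ideal of ${\sf n}$ generated by $c_{1n},\ldots,c_{(n-1)n}$. I will show that, inside the vector space decomposition $\dk_{m,n} = \dk_{m,n-1} \oplus {\sf n}$, one has ${\sf c}_{m,n} = {\sf c}_{m,n-1} \oplus {\sf c}_n$ and $[{\sf c}_{m,n},{\sf c}_{m,n}] = [{\sf c}_{m,n-1},{\sf c}_{m,n-1}] \oplus [{\sf c}_n,{\sf c}_n]$, and will then analyze the new summand ${\sf n}/[{\sf c}_n,{\sf c}_n]$ of the resulting quotient.

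Both of these identifications reduce to a single technical inclusion:
\[
\rho({\sf c}_{m,n-1})({\sf n}) \subset [{\sf c}_n, {\sf c}_n],
\]
where $\rho$ is the Lie action from Lemma~\ref{lem:dk_decomp}. Granting this, the space ${\sf c}_{m,n-1} \oplus {\sf c}_n$ is seen to be a Lie ideal of $\dk_{m,n}$ containing the generators of ${\sf c}_{m,n}$ (using also that ${\sf c}_n$ is an ideal of ${\sf n}$ preserved by $\rho(\dk_{m,n-1})$, which follows directly from the formulas for $\rho$), hence equals ${\sf c}_{m,n}$, and expanding brackets then produces the identity for $[{\sf c}_{m,n},{\sf c}_{m,n}]$. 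The inclusion itself is checked on generators of ${\sf c}_{m,n-1}$, where $\rho(c_{ij})(a_{kn}) = 0$ and $\rho(c_{ij})(c_{ln})$ is either $0$ or a bracket of two $c_{\bullet n}$'s and hence lies in $[{\sf c}_n, {\sf c}_n]$; it then extends to all of ${\sf n}$ by the Leibniz rule, and to an arbitrary $w \in {\sf c}_{m,n-1}$ by induction on bracket depth, using $\rho([x,y]) = [\rho(x),\rho(y)]$ together with the stability of $[{\sf c}_n,{\sf c}_n]$ under $\rho(\dk_{m,n-1})$.

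Passing to the quotient yields $\edk_{m,n} \cong \edk_{m,n-1} \oplus {\sf n}/[{\sf c}_n,{\sf c}_n]$ as graded $\Q$-vector spaces. The new summand fits in a split short exact sequence
\[
0 \to {\sf c}_n/[{\sf c}_n,{\sf c}_n] \to {\sf n}/[{\sf c}_n,{\sf c}_n] \to {\sf n}/{\sf c}_n \to 0,
\]
whose right term is $\lie(a_{1n},\ldots,a_{mn})$ by definition of ${\sf c}_n$. The abelianization ${\sf c}_n/[{\sf c}_n,{\sf c}_n]$ carries a natural action of $U(\lie(a_{1n},\ldots,a_{mn})) = \ass(a_{1n},\ldots,a_{mn})$ via the adjoint, and the target formula reduces to the freeness of this module on the classes $[c_{1n}], \ldots, [c_{(n-1)n}]$. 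This is the standard fact that in a free Lie algebra $\lie(X \sqcup Y)$ the abelianization of the ideal generated by $Y$ is the free $U(\lie(X))$-module on $Y$, provable by a Hall-basis argument on ${\sf n}$; the shift $[-1]$ in the statement accounts for each $c_{in}$ having degree one. The main obstacle will be the technical inclusion $\rho({\sf c}_{m,n-1})({\sf n}) \subset [{\sf c}_n,{\sf c}_n]$: it is a routine induction, but one must choose the filtration carefully so that intermediate applications of $\rho(\dk_{m,n-1})$ do not escape $[{\sf c}_n,{\sf c}_n]$ at any step.
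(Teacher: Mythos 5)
Your argument is correct and follows essentially the same route as the paper: the same semidirect decomposition from Lemma~\ref{lem:dk_decomp}, the same reduction $[{\sf c}_{m,n},{\sf c}_{m,n}] = [{\sf c}_{m,n-1},{\sf c}_{m,n-1}] \oplus [{\sf c}_0,{\sf c}_0]$ (for which you supply the inductive verification that the paper only asserts), and the same identification of the abelianized ideal as a free $\ass(a_{1n},\ldots,a_{mn})$-module on the classes of the $c_{in}$, which the paper obtains via the Lazard elimination theorem rather than your short exact sequence and Hall-basis phrasing.
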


\begin{proof}
Let ${\sf c}_0$ be the Lie ideal of $\lie(a,c)$ generated by $c_{in}$, $1\le i \le n-1$.
Through the semi-direct decomposition of Lemma~\ref{lem:dk_decomp} the ideal $[{\sf c}_{m,n}, {\sf c}_{m,n}]$ corresponds to $[{\sf c}_{m,n-1}, {\sf c}_{m,n-1}] \oplus [{\sf c}_0, {\sf c}_0]$ in $\dk_{m,n-1} \oplus \lie(a,c)$, because ${\sf c}_{m,n} = {\sf c}_{m,n-1} \oplus {\sf c}_0$ and $[{\sf c}_{m,n-1}, {\sf c}_0] \subset [{\sf c}_0, {\sf c}_0]$.
Thus we obtain
\[
\edk_{m,n} \cong \edk_{m,n-1} \oplus \big( \lie(a,c)/[{\sf c}_0, {\sf c}_0] \big)
\]
as a $\Q$-linear space.
By the Lazard elimination theorem \cite[Chap II \S 2.9, Proposition 10]{Bou}, we have the following $\Q$-linear direct sum decomposition
\[
\lie(a,c) \cong \lie(a_{1n}, \ldots, a_{mn}) \oplus
\lie(\{ {\rm ad}_w (c_{in}) \}_{w,i}).
\]
Here, $\lie(\{ {\rm ad}_w (c_{in}) \}_{w,i})$ is the free Lie algebra generated by all elements of the form ${\rm ad}_w(c_{in}) = {\rm ad}_{w_1} \cdots {\rm ad}_{w_\lambda}(c_{in})$, where $1\le i \le n-1$ and $w = w_1 \cdots w_{\lambda}$ with  $w_1, \ldots, w_\lambda \in \{ a_{1n}, \ldots, a_{mn} \}$ runs over all associative words in $a_{1n}, \ldots, a_{mn}$ (including the empty word).
Hence
\[
\lie(a,c)/[{\sf c}_0, {\sf c}_0]
\cong
\lie(a_{1n}, \ldots, a_{mn}) \oplus
\bigoplus_{i=1}^{n-1} \bigoplus_{w}
\Q\, {\rm ad}_w(c_{in}).
\]
This proves the lemma.
\end{proof}

Repeated use of Lemma~\ref{lem:edk_direct} yields a $\Q$-linear graded direct sum decomposition 
\begin{equation} \label{eq:edk_decomp}
\edk_{m,n} \cong \bigoplus_{i=1}^n (\lie_m)_i
\oplus \bigoplus_{1 \le i < j \le n} (\ass_m[-1])_{ij},
\end{equation}
where the meaning of the components $(\lie_m)_i$ and $(\ass_m[-1])_{ij}$ is as follows:
\begin{align*}
(\lie_m)_i & \ni
u(x_1,\ldots,x_m)_i 
\mapsto u(a_{1i},\ldots,a_{mi}) \in \edk_{m,n}, \\
(\ass_m[-1])_{ij} &\ni
w(x_1,\ldots,x_m)_{ij}
\mapsto {\rm ad}_{w_j} (c_{ij}) \in \edk_{m,n}.
\end{align*}
Here, $u = u(x_1,\ldots,x_m) \in \lie_m$, $w = w(x_1,\ldots,x_m) \in \ass_m$ and we write $w_j = w(a_{1j},\ldots,a_{mj}) \in \ass(a_{1j},\ldots,a_{mj})$.

\begin{example}
\begin{enumerate}
\item[(i)]
$\edk_{2,1} \cong \lie_2$.
\item[(ii)]
$
\edk_{1,2} \cong (\lie_1)_1 \oplus (\lie_1)_2 \oplus (\ass_1[-1])_{12}
\cong \Q x_1 \oplus \Q x_2 \oplus \ass(x)[-1]
$.
\item[(iii)]
$
\edk_{2,2}
\cong \lie(x, y)_1 \oplus \lie(x, y)_2 \oplus (\ass(x,y)[-1])_{12}
$.
\end{enumerate}
\end{example}

In order to describe the Lie bracket on $\edk_{m,n}$ in view of the direct sum decomposition~\eqref{eq:edk_decomp}, we need to recall the partial differential operators on $\lie_m$ with respect to the generators $x_1, \ldots, x_m$.
Let $a \in \lie_m$.
Viewed as an element in $\ass_m$, it is uniquely written as 
\[
a = \sum_{i=1}^m (\pa_i a) x_i = \sum_{i=1}^m x_i (\pa^i a),
\]
where $\pa_i a, \pa^i a \in \ass_m$.
Furthermore, we have $\pa^i a = \iota(\pa_i a)$.
The operator $\pa_i = \pa_{x_i} : \lie_m \to \ass_m$ satisfies the following formula: for any $u, v \in \lie_m$, 
\begin{equation} \label{eq:pa([u,v])}
\pa_i([u,v]) = u (\pa_i v) - v (\pa_i u).
\end{equation}

The following proposition describes the Lie bracket on $\edk_{m,n}$.

\begin{prop} \label{prop:edk_bracket}
Let $u = u(x_1, \ldots, x_m), v = v(x_1,\ldots,x_m) \in \lie_m$ and $w = w(x_1, \ldots, x_m), w'=w'(x_1,\ldots,x_m) \in \ass_m$.
\begin{enumerate}
\item[{\rm (i)}]
For any $1 \le j \le n$, we have $[u_j, v_j] = [u,v]_j$.
For any $1 \le j < k \le n$,
\begin{equation} \label{eq:ujvk}
[u_j, v_k] = \left( \sum_{i=1}^m (\pa_i v) x_i \iota(\pa_i u) \right)_{jk}.
\end{equation} 
\item[{\rm (ii)}]
Let $1 \le i \le n$ and $1 \le j < k \le n$.
If $i \notin \{ j, k\}$, we have $[u_i, w_{jk}] = 0$.
Furthermore, we have $[u_k, w_{jk}] = (uw)_{jk}$ and $[u_j, w_{jk}] = -(wu)_{jk}$.

\item[{\rm (iii)}]
We have $[w_{ij}, w'_{kl}] = 0$ for any $1 \le i< j \le n$ and $1 \le k < l \le n$.
\end{enumerate}
\end{prop}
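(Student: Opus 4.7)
The plan is to exploit the defining feature of $\edk_{m,n}$: the image of the Lie ideal ${\sf c}$ is abelian, so any bracket of two elements both lying in ${\sf c}$ vanishes. Part (iii) is then immediate, since $w_{ij}$ and $w'_{kl}$ are, by construction, nested ad-images of $c_{ij}$ and $c_{kl}$ and hence lie in ${\sf c}$. The first assertion of (i) is also immediate: by iterated use of Lemma~\ref{lem:dk_decomp}, the elements $a_{1j},\ldots,a_{mj}$ generate a Lie subalgebra of $\dk_{m,n}$ that is isomorphic to $\lie_m$, so $u\mapsto u_j$ is a Lie homomorphism, and therefore $[u_j,v_j]=[u,v]_j$ already in $\dk_{m,n}$.

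For (ii) I would, in each of the three sub-cases $i\notin\{j,k\}$, $i=k$, $i=j$, run a double induction: first fix $u=x_p$ and induct on the length of $w\in\ass_m$, then extend to general $u\in\lie_m$ by induction on bracket depth. The base case $w=1$ reduces to explicit commutation/4T relations in $\dk_{m+n}$: for $i\notin\{j,k\}$ the bracket $[a_{pi},c_{jk}]$ vanishes by commutation, since the four indices $p,m{+}i,m{+}j,m{+}k$ are distinct; for $i=k$ the identification $(\cdot)_{jk}$ makes $[a_{pk},c_{jk}]=(x_p)_{jk}$ tautological; for $i=j$ the 4T relation $[t_{p,m+j},\,t_{p,m+k}+t_{m+j,m+k}]=0$ yields $[a_{pj},c_{jk}]=-[a_{pk},c_{jk}]=-(x_p)_{jk}$. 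For the induction step on $w$, write $(x_q w)_{jk}=[a_{qk},w_{jk}]$ (from the nested ad-action convention) and apply Jacobi to $[a_{pi},[a_{qk},w_{jk}]]$; the cross term contains $[a_{pi},a_{qk}]$, which is either zero by commutation or lies in ${\sf c}$ via 4T, so its bracket with $w_{jk}\in{\sf c}$ dies in $\edk_{m,n}$, leaving only the term that closes the induction. The extension from $u=x_p$ to general $u$ is then a pure Jacobi induction on bracket depth, and the Lie brackets of inputs translate into associative products in $\ass_m$ on the right-hand side.

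For the second assertion of (i) the plan has the same shape. First take $u=x_p$: induct on the bracket depth of $v$ using Jacobi, part (ii) in the case $i=k$, and the derivation rule \eqref{eq:pa([u,v])} to obtain $[a_{pj},v_k]=((\pa_p v)x_p)_{jk}$, which matches \eqref{eq:ujvk} at $u=x_p$. Then extend to general $u$ by induction on its bracket depth: Jacobi reduces $[[u_{1,j},u_{2,j}],v_k]$ to two terms of the form $[u_{a,j},\,(\sum_i(\pa_i v)x_i\iota(\pa_i u_{a'}))_{jk}]$, each computable by part (ii) in case $i=j$, and combining them with $\iota(u_a)=-u_a$ for $u_a\in\lie_m$ together with $\pa_i([u_1,u_2])=u_1\pa_i u_2-u_2\pa_i u_1$ reproduces the expected expression $\sum_i(\pa_i v)x_i\iota(\pa_i [u_1,u_2])$ exactly. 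The main obstacle is bookkeeping rather than depth: the subscript notation $(\cdot)_{jk}$ encodes a nested ad-action and not literal multiplication, so at each step one must verify that associative products of inputs on the right-hand side correspond to the correct iterated ad-actions on the left. With that convention firmly in hand, every computation reduces to Jacobi, 4T, and the annihilation of $[{\sf c},{\sf c}]$ in $\edk_{m,n}$.
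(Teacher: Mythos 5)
Your proposal is correct and follows essentially the same route as the paper: part (iii) from $[{\sf c},{\sf c}]=0$ in $\edk_{m,n}$, part (ii) by Jacobi inductions on $w$ and on the bracket depth of $u$ whose cross terms land in $[{\sf c},{\sf c}]$ via the commutation and 4T relations, and part (i) by a bidegree induction combining part (ii) with the derivation rule \eqref{eq:pa([u,v])} and $\iota(u)=-u$ on $\lie_m$. The only cosmetic difference is that the paper derives the $i=j$ case of (ii) from the $i=k$ case via the identity ${\rm ad}_{w_k}(c_{jk})={\rm ad}_{\overline{w}_j}(c_{jk})$ (Lemma~\ref{lem:wkwj}), whereas you handle that case by a separate direct induction; both work.
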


We need a lemma. 

\begin{lem} \label{lem:wkwj}
For $w = w(x_1,\ldots, x_m) \in \ass_m$ and $1 \le j \neq k \le n$, we have
\[
{\rm ad}_{w_k}(c_{jk})
= {\rm ad}_{\overline{w}_j}(c_{jk}).
\]
\end{lem}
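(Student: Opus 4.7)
The plan is to prove the identity by induction on the length $\lambda$ of the associative word $w \in \ass_m$, with the 4T relation doing all the real work.

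The base case $\lambda = 0$ is immediate since both sides equal $c_{jk}$. For $\lambda = 1$, i.e.\ $w = x_i$, I would apply the 4T relation to the triple of indices $(i, m+j, m+k)$ in $\dk_{m+n}$, which gives $[a_{ij} + a_{ik}, c_{jk}] = 0$, hence $[a_{ik}, c_{jk}] = [-a_{ij}, c_{jk}]$. Since $\overline{x_i}_j = -a_{ij}$, this is exactly ${\rm ad}_{a_{ik}}(c_{jk}) = {\rm ad}_{\overline{x_i}_j}(c_{jk})$.

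For the inductive step, write $w = x_{s_1} w'$ with $w' = x_{s_2} \cdots x_{s_\lambda}$, so that $w_k = a_{s_1 k} w'_k$ and $\overline{w}_j = \overline{w'}_j \cdot (-a_{s_1 j})$ in the relevant associative algebras. Then the inductive hypothesis gives ${\rm ad}_{w'_k}(c_{jk}) \equiv {\rm ad}_{\overline{w'}_j}(c_{jk}) \pmod{[{\sf c},{\sf c}]}$, while the $\lambda=1$ case gives ${\rm ad}_{a_{s_1 k}}(c_{jk}) \equiv {\rm ad}_{-a_{s_1 j}}(c_{jk})$. Composing, it remains to verify that ${\rm ad}_{a_{s_1 k}}$ commutes with ${\rm ad}_{\overline{w'}_j}$ modulo $[{\sf c},{\sf c}]$ when applied to an element of ${\sf c}$.

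This final commutation is handled by sliding ${\rm ad}_{a_{s_1 k}}$ past the factors ${\rm ad}_{a_{s_r j}}$ of ${\rm ad}_{\overline{w'}_j}$ one at a time via the Jacobi identity. Each slide produces an error term of the form $[[a_{s_1 k}, a_{s_r j}], \,\cdot\,]$ applied to an element of ${\sf c}$; the key observation is that the inner bracket $[a_{s_1 k}, a_{s_r j}]$ itself always lies in ${\sf c}$. Indeed, if $s_1 \neq s_r$ then the four indices $s_1, s_r, m+j, m+k$ are distinct and the bracket vanishes by the commutation relations in $\dk_{m+n}$; if $s_1 = s_r$, then 4T for the triple $(s_1, m+j, m+k)$ gives $[a_{s_1 j}, a_{s_1 k}] = [a_{s_1 k}, c_{jk}]$, which visibly lies in ${\sf c}$. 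Thus each error sits in $[{\sf c},{\sf c}]$ and dies in $\edk_{m,n}$. I expect the only mild obstacle to be the bookkeeping of the nested adjoint actions as one slides through the entire word $\overline{w'}_j$, but no new identities beyond 4T are needed.
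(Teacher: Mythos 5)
Your proof is correct and follows essentially the same route as the paper's: reduce to monomials, induct on the word length by peeling off the first letter, apply the inductive hypothesis, slide ${\rm ad}_{a_{s_1 k}}$ through ${\rm ad}_{\overline{w'}_j}$ with all commutator error terms landing in $[{\sf c},{\sf c}]$ because $[a_{s_1 k}, a_{s_r j}] \in {\sf c}$, and finish with the degree-one 4T identity $[a_{ik},c_{jk}]=-[a_{ij},c_{jk}]$. This matches the paper's proof step for step.
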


\begin{proof}
We may assume that $w$ is a monomial of degree $d \ge 1$.
So let $w = x_{i_1} \cdots x_{i_d}$.
If $d = 1$, the formula holds true since $[a_{i_1 k}, c_{jk}] = - [a_{i_1 j}, c_{jk}]$.
Let $d \ge 2$ and assume that the formula holds true in degrees less than $d$.
Set $w' = x_{i_2} \cdots x_{i_d}$. 
Using the inductive assumption, we compute
\begin{align*}
& {\rm ad}_{a_{i_1 k}\cdots a_{i_d k}}(c_{jk})
= 
{\rm ad}_{a_{i_1 k}} {\rm ad}_{a_{i_2 k} \cdots a_{i_d k}}(c_{jk}) 
= {\rm ad}_{a_{i_1 k}} {\rm ad}_{\overline{a_{i_2 j}\cdots a_{i_d j}}}(c_{jk}) \\
=\, & (-1)^{d-1} \sum_{p=2}^d {\rm ad}_{a_{i_d j}} \cdots {\rm ad}_{[a_{i_1 k}, a_{i_p j}]} \cdots {\rm ad}_{a_{i_2 j}}(c_{jk}) + {\rm ad}_{\overline{a_{i_2 j}\cdots a_{i_d j}}}([a_{i_1 k},c_{jk}]).
\end{align*}
Since $[a_{i_1 k}, a_{i_p j}] = - \delta_{i_1 i_p} [c_{jk}, a_{i_p j}] \in {\sf c}$, the first term vanishes in $\edk_{m,n}$.
Therefore, ${\rm ad}_{a_{i_1 k}\cdots a_{i_d k}}(c_{jk})$ is equal to 
\[
{\rm ad}_{\overline{a_{i_2 j}\cdots a_{i_d j}}}([a_{i_1 k},c_{jk}])
= - {\rm ad}_{\overline{a_{i_2 j}\cdots a_{i_d j}}} [a_{i_1 j}, c_{jk}]
= {\rm ad}_{\overline{a_{i_1 j}\cdots a_{i_d j}}} (c_{jk}).
\]
This completes the proof.
\end{proof}

\begin{proof}[Proof of Proposition~\ref{prop:edk_bracket}]
First of all, the formula $[u_j, v_j] = [u,v]_j$ in (i) is clear.
In what follows, we will use this formula without mentioning explicitly.

(iii)
Since the expressions $w_{ij}$ and $w'_{kl}$ viewed as elements in $\dk_{m,n}$ are in the ideal ${\sf c}$, their commutator lies in $[{\sf c}, {\sf c}]$. Therefore $[w_{ij}, w'_{kl}] = 0 \in \edk_{m,n}$.

(ii)
To prove $[u_i, w_{jk}] = 0$ when $i \notin \{ j,k\}$, it is sufficient to consider the case where $u$ is of degree $1$ and $w$ is a monomial.
So we may assume that $u = x_q$ for some $1 \le q \le m$ and $w = x_{i_1}\cdots x_{i_d}$.
We compute
\begin{align*}
[u_i, w_{jk}] & = 
[a_{q i}, {\rm ad}_{w_k} (c_{jk})] \\
& = \sum_{p=1}^d {\rm ad}_{a_{i_1 k}} \cdots {\rm ad}_{[a_{q i}, a_{i_p k}]}\cdots {\rm ad}_{a_{i_d k}}(c_{jk})
+ {\rm ad}_{w_k}([a_{q i},c_{jk}]).
\end{align*}
The first term vanishes since $[a_{qi}, a_{i_p k}] = - \delta_{q i_p}[c_{ik}, a_{i_p k}] \in {\sf c}$.
The second term vanishes as well, since $[a_{q i}, c_{jk}] = 0$ by the commutation relation.

To prove the other two formulas, we first prove that
\begin{equation} \label{eq:uadw}
[u_k, {\rm ad}_{w_k}(c_{jk})] = {\rm ad}_{(uw)_k}(c_{jk})
\end{equation}
for any $1 \le i \le n$ and $1 \le j \neq k \le n$.
We may assume that $u$ is homogeneous and proceed by induction on $\deg u$.
When $\deg u = 1$, we have $[u_k, w_{jk}] = {\rm ad}_{u_k} {\rm ad}_{w_k}(c_{jk}) = {\rm ad}_{(uw)_k}(c_{jk}) = (uw)_{jk}$.
Let $\deg u > 2$ and assume that the formula holds true in degrees less than $\deg u$ and that $u$ is of the form $u = [u', u'']$.
We compute 
\begin{align*}
[u_k, {\rm ad}_{w_k}(c_{jk})] & =   
[[u'_k, {\rm ad}_{w_k}(c_{jk})], u''_k] + [u'_k, [u''_k, {\rm ad}_{w_k}(c_{jk})]] \\
& = [{\rm ad}_{(u'w)_k}(c_{jk}), u''_k] + [u'_k, {\rm ad}_{(u'' w)_k}(c_{jk})] \\ 
& = - {\rm ad}_{(u'' u' w)_k}(c_{jk}) + {\rm ad}_{(u' u'' w)_k}(c_{jk}) \\
& = {\rm ad}_{(uw)_k}(c_{jk}).
\end{align*}
In the second and third lines, we have used the inductive assumption.

Equation~\eqref{eq:uadw} shows that $[u_k,w_{jk}] = (uw)_{jk}$ for $j<k$.
To prove $[u_j, w_{jk}] = -(wu)_{jk}$ we compute
\[
[u_j, w_{jk}] = [u_j, {\rm ad}_{\overline{w}_j}(c_{jk})] = {\rm ad}_{u_j\overline{w}_j}(c_{jk}) = {\rm ad}_{w_k \overline{u}_k}(c_{jk}) = - {\rm ad}_{w_k u_k}(c_{jk}).
\]
Here, we have used Lemma~\ref{lem:wkwj} in the first and third equalities, formula \eqref{eq:uadw} in the second equality, and the fact that $\overline{u}_k = -u_k$ in the last equality.

(i) It remains to prove formula \eqref{eq:ujvk}.
Setting $\Phi(u,v) := \sum_{i=1}^m (\pa_i v) x_i \iota(\pa_i u)$, let us prove that $[u_j, v_k] = \Phi(u,v)_{jk}$ for any homogeneous elements $u, v \in \lie_m$.
We use the induction on the bidegree $(\deg u, \deg v)$.
Since $[a_{i_1 j}, a_{i_2 k}] = \delta_{i_1 i_2}[a_{i_2 k}, c_{jk}] = \delta_{i_1 i_2} (x_{i_2})_{jk}$, the case $(\deg u, \deg v) = (1,1)$ is done.
We first increase $\deg u$.
Let $\deg u > 1$ and assume that $u = [u',u'']$ for some $u', u'' \in \lie_m$ satisfying $[u'_j, v_k] = \Phi(u', v)_{jk}$ and $[u''_j, v_k]=\Phi(u'', v)_{jk}$. 
On the one hand, using these assumptions we compute
\begin{align*}
[u_j, v_k] & = [[u'_j, v_k], u''_j] + [u'_j, [u''_j, v_k]] \\
& = [\Phi(u',v)_{jk}, u''_j] + [u'_j, \Phi(u'',v)_{jk}] \\
& = \big( \Phi(u',v) u'' - \Phi(u'',v) u' \big)_{jk}.
\end{align*}
In the last line, we have used (ii).
On the other hand, using \eqref{eq:pa([u,v])} and the fact that $\iota$ acts as minus the identity on $\lie_m$, we see that $\Phi(u,v) = \Phi([u',u''],v) = \Phi(u',v)u'' - \Phi(u'',v)u'$.
Hence we conclude that $[u_j, v_k] = \Phi(u,v)_{jk}$. 
A similar argument works for increasing $\deg v$.
This completes the proof.
\end{proof}

Let $\mathcal{A}_{m,n}^\EM = U(\edk_{m,n})$ be the universal enveloping algebra of $\edk_{m,n}$.
It is the quotient of $\mathcal{A}_{m,n} = U(\dk_{m,n})$ by the span of monomials in $a_{ij}$ and $c_{ij}$ which contain at least two generators of type $c_{ij}$. 

The augmentation ideal of $\Q P_{m,n}$ projects to a two-sided ideal $\bar{I}$ of $\Q P_{m,n}^\EM$.
Let $\mathcal{A}_{\Q P_{m,n}^\EM}:= \prod_{k \ge 0} {\bar{I}}^k / {\bar{I}}^{k+1}$ be the completed associated graded algebra.
The following proposition is a consequence of Proposition~\ref{prop:grpureemb}.

\begin{prop} \label{prop:grpuremb2}
There is a canonical isomorphism of graded $\Q$-algebras $\mathcal{A}_{\Q P_{m,n}^\EM} \cong \mathcal{A}_{m,n}^\EM$, through which the class of $\alpha_{ij}-1$ (resp. $\tau_{ij}-1$) corresponds to $a_{ij}$ (resp. $c_{ij}$).
\end{prop}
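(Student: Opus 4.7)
The plan is to deduce this from Proposition~\ref{prop:grpureemb} by tracking how the ideal $J^2$ behaves when we pass to the associated graded. By Proposition~\ref{prop:grpureemb} we already have a canonical graded algebra isomorphism $\Phi : \mathcal{A}_{\Q P_{m,n}} \xrightarrow{\sim} \mathcal{A}_{m,n}$ sending the class of $\alpha_{ij}-1$ to $a_{ij}$ and the class of $\tau_{ij}-1$ to $c_{ij}$. It remains to identify the kernel of the obvious surjection $\mathcal{A}_{\Q P_{m,n}} \twoheadrightarrow \mathcal{A}_{\Q P_{m,n}^\EM}$ with the two-sided ideal $\mathcal{M} \subset \mathcal{A}_{m,n}$ which, in the description of $\mathcal{A}_{m,n}^\EM$ given just before the proposition, is generated by all monomials containing at least two $c$-type generators.

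For this, I will use the standard fact that for a filtered algebra $B$ with filtration $\{F_kB\}$ and a two-sided ideal $L\subseteq B$, the associated graded of $B/L$ (with the induced filtration) is canonically isomorphic to $\gr(B)/\gr(L)$, where $\gr(L) = \bigoplus_k \bigl((L\cap F_kB) + F_{k+1}B\bigr)/F_{k+1}B$. Applying this with $B = \Q P_{m,n}$, $F_kB = I^k$, and $L = J^2$, we get
\[
\mathcal{A}_{\Q P_{m,n}^\EM} \cong \mathcal{A}_{m,n}/\Phi(\gr J^2),
\]
so the proof reduces to showing $\Phi(\gr J^2) = \mathcal{M}$.

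The inclusion $\Phi(\gr J^2) \subseteq \mathcal{M}$ is the formal direction: a typical element of $J^2$ is a finite sum of products $u\,(\tau_{ij}-1)\,v\,(\tau_{kl}-1)\,w$ with $u,v,w\in\Q P_{m,n}$; writing each factor as a polynomial in the generators $\alpha_{pq}-1$ and $\tau_{pq}-1$ and passing to the associated graded, the leading term in any filtration degree lies in the ideal of $\mathcal{A}_{m,n}$ generated by expressions of the form $(\cdots)\,c_{ij}\,(\cdots)\,c_{kl}\,(\cdots)$, hence in $\mathcal{M}$. Conversely, for the inclusion $\Phi(\gr J^2) \supseteq \mathcal{M}$, it suffices to exhibit each generating monomial $p\,c_{ij}\,q\,c_{kl}\,r$ of $\mathcal{M}$ as the associated-graded class of an element of $J^2$; this is produced by taking suitable lifts $\tilde p,\tilde q,\tilde r$ of $p,q,r$ to products of $\alpha_{pq}-1$ and $\tau_{pq}-1$ in $\Q P_{m,n}$ and forming $\tilde p\,(\tau_{ij}-1)\,\tilde q\,(\tau_{kl}-1)\,\tilde r \in J^2$, whose class in the appropriate graded piece is exactly the chosen monomial.

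The main technical point — and where one must be a bit careful — is the inclusion $\Phi(\gr J^2) \subseteq \mathcal{M}$: one needs to check that mixing in factors of $\alpha_{pq}-1$ between the two $\tau$-factors cannot artificially raise the filtration degree in such a way that the leading term escapes $\mathcal{M}$. This is a routine consequence of the fact that the $I$-filtration is compatible with multiplication and that $J\subseteq I$, so that any sum-expression of an element of $J^2$ with leading $I$-degree $d$ can be rewritten so that each summand genuinely contains at least two factors $\tau_{ij}-1$ at that leading degree. The final identification $\mathcal{A}_{\Q P_{m,n}^\EM}\cong\mathcal{A}_{m,n}^\EM$ and the explicit correspondence of generators then follow from Proposition~\ref{prop:grpureemb}. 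An alternative (essentially equivalent) route would be to redo the inductive semi-direct-product argument of Proposition~\ref{prop:grpureemb}, replacing $\ass_{m+n-1}$ by its quotient by the ideal generated by degree-$\geq 2$ monomials in the $\tau$-generators and invoking Lemma~\ref{lem:edk_direct} in place of Lemma~\ref{lem:dk_decomp}, but the ideal-theoretic argument above seems cleaner.
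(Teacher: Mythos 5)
Your reduction of the statement to Proposition~\ref{prop:grpureemb} via the general fact $\gr(B/L)\cong\gr(B)/\gr(L)$ (with $\gr(L)$ the leading-term ideal) is a sensible frame, and since the paper itself offers only the one-line remark that the proposition ``is a consequence of Proposition~\ref{prop:grpureemb}'', there is no detailed argument of its own to compare against. The inclusion $\mathcal{M}\subseteq\Phi(\gr J^2)$ is indeed formal, exactly as you describe, and it already yields a surjection $\mathcal{A}_{m,n}^{\EM}\twoheadrightarrow\mathcal{A}_{\Q P_{m,n}^\EM}$.

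The genuine gap is the reverse inclusion $\Phi(\gr J^2)\subseteq\mathcal{M}$, which you correctly flag as the main technical point but then dismiss as routine. It is not: your sentence ``any sum-expression of an element of $J^2$ with leading $I$-degree $d$ can be rewritten so that each summand genuinely contains at least two factors $\tau_{ij}-1$ at that leading degree'' is the statement to be proved, restated. The obstruction is cancellation: if $x=\sum_s u_s(\tau_{i_sj_s}-1)v_s(\tau_{k_sl_s}-1)w_s$ lies in $I^d$ while the individual summands lie only in $I^e$ for some $e<d$ (their degree-$e$ symbols cancel), then the class of $x$ in $I^d/I^{d+1}$ is not computed from those symbols, and nothing in your argument prevents it from landing outside $\mathcal{M}_d$. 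For a general two-sided ideal $L$ in a filtered algebra one has only $(\gr L)^2\subseteq\gr(L^2)$, and the reverse inclusion can fail --- this is the standard-basis/flatness phenomenon --- so some input specific to $P_{m,n}$ is required. The workable routes are essentially the ones you relegate to your last sentence: redo the induction of Proposition~\ref{prop:grpureemb} using $P_{m,n}\cong P_{m,n-1}\ltimes F_{m+n-1}$ and track how $J$ and $J^2$ sit inside the resulting tensor decomposition of $\Q P_{m,n}$ (presumably what the paper means by ``consequence of'' that proposition), or exhibit a filtered algebra map $\Q P_{m,n}\to\widehat{\mathcal{A}}_{m,n}$ inducing the canonical isomorphism on associated gradeds and carrying $J$ into the closed ideal generated by the $c_{ij}$, which forces $\gr(J^2)\subseteq\mathcal{M}$. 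A final caveat: your $\mathcal{M}$ follows the paper's verbal description of $\mathcal{A}_{m,n}^\EM$ as $\mathcal{A}_{m,n}$ modulo monomials containing at least two $c$-generators; this quotient is not literally $U(\edk_{m,n})$ (by PBW, $c_{12}^2\neq 0$ in $U(\edk_{m,n})$, yet it dies in $\mathcal{A}_{m,n}/\mathcal{M}$, and it does die in $\mathcal{A}_{\Q P_{m,n}^\EM}$ since $(\tau_{12}-1)^2\in J^2$), so you should state explicitly that it is the quotient $\mathcal{A}_{m,n}/\mathcal{M}$, not the enveloping algebra itself, that your argument targets.
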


\subsection{Description of operadic operations on $\edk_{m,n}$} \label{subsec:dooedk}

The operadic operations introduced in Section~\ref{subsec:cfc_mps} naturally induce operations on emergent braids and chord diagrams.
Let us describe the operations on $\edk_{m,n}$ in view of the direct sum decomposition \eqref{eq:edk_decomp}.
In what follows, let $u = u(x_1,\ldots,x_m) \in \lie_m$ and $w = w(x_1,\ldots, x_m) \in \ass_m$. 

First, we have 
\begin{equation} \label{eq:del0}
\delta_0(u_i) = u(x_2, \ldots, x_{m+1})_i ,
\qquad  
\delta_0(w_{ij}) = w(x_2,\ldots,x_{m+1})_{ij},
\end{equation}
and for $1 \le k \le m$, 
\begin{align}
\delta_k( u_i) &= u(x_1,\ldots,x_k + x_{k+1}, \ldots, x_{m+1})_i, \nonumber \\
\delta_k(w_{ij}) &= w(x_1,\ldots,x_k + x_{k+1}, \ldots, x_{m+1})_{ij}. \label{eq:delk}
\end{align}

Second, we describe the cabling operations with respect to blue strands. 
Let $R : \lie_m \to \ass_m$ be the unique $\Q$-linear map satisfying $R(x_i) = 0$ for $i=1,\ldots,m$ and for any $a,b \in \lie_m$, 
\begin{equation} \label{eq:Rrecursive}
R([a,b]) = [R(a), b] + [a, R(b)] 
+ \sum_{i=1}^m \left( (\pa_i b )x_i \iota(\pa_i a)
- (\pa_i a) x_i \iota(\pa_i b) \right).
\end{equation}

\begin{lem} \label{lem:edk_cab_strnd}
For $1 \le k \le n$, we have the following: 
\[
\delta_{m+k}(u_i) =  
\begin{cases}
u_i & (i < k) \\
u_k + u_{k+1} + R(u)_{k(k+1)} & (i = k) \\
u_{i+1} & (i > k)
\end{cases},
\]
and 
\[
\delta_{m+k}(w_{ij}) =
\begin{cases}
w_{ij} & ( j < k) \\
w_{ik} + w_{i(k+1)} & ( j = k) \\
w_{i(j+1)} & ( i < k < j) \\
w_{k(j+1)} + w_{(k+1)(j+1)} & ( i = k) \\
w_{(i+1)(j+1)} & ( k < i)
\end{cases}.
\]
\end{lem}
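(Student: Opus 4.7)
The plan is to verify the lemma by case analysis on the position of $(i,j)$ (respectively $i$) relative to $k$, using that $\delta_{m+k}$ is a Lie homomorphism whose action on the generators $a_{pl}$, $c_{pq}$ of $\dk_{m,n}$ was spelled out in Section~\ref{subsec:cfc_mps}. For $u_i$ with $i \neq k$, and for $w_{ij}$ in each of the cases $j < k$, $i < k < j$, $i = k < j$, and $k < i$, the map $\delta_{m+k}$ acts ``diagonally'' on the relevant generators (either the identity or a translation of the second index), so applying it to $u_i = u(a_{1i},\ldots,a_{mi})$ or to $w_{ij} = {\rm ad}_{w_j}(c_{ij})$ immediately recovers the stated formula.

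The only nontrivial $w$-case is $w_{ik}$ (with $i < k$), where $\delta_{m+k}(a_{pk}) = a_{pk} + a_{p(k+1)}$ and $\delta_{m+k}(c_{ik}) = c_{ik} + c_{i(k+1)}$. Writing $w = x_{i_1}\cdots x_{i_d}$ and expanding
\[
\delta_{m+k}(w_{ik}) = {\rm ad}_{a_{i_1 k} + a_{i_1(k+1)}}\cdots {\rm ad}_{a_{i_d k} + a_{i_d(k+1)}}(c_{ik} + c_{i(k+1)})
\]
as a sum over choices $\epsilon_0,\ldots,\epsilon_d \in \{k, k+1\}$, I would iterate Proposition~\ref{prop:edk_bracket}(ii) from the innermost factor $c_{i\epsilon_0}$ outward: each successive ${\rm ad}_{a_{i_l \epsilon_l}}$ contributes nonzero only when $\epsilon_l \in \{i, \epsilon_0\}$, and since $i < k$ this forces $\epsilon_l = \epsilon_0$ for every $l$. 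Only the pure choices with all $\epsilon_l = k$ or all $\epsilon_l = k+1$ survive, giving $w_{ik} + w_{i(k+1)}$.

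The main case is $u_k$, which I would prove by induction on the Lie degree of $u \in \lie_m$. For $u = x_p$, one has $\delta_{m+k}(a_{pk}) = a_{pk} + a_{p(k+1)}$ and $R(x_p) = 0$, so the formula holds. For $u = [a,b]$ of higher degree, applying the Lie homomorphism property and the inductive hypothesis on $a$ and $b$ gives
\[
\delta_{m+k}(u_k) = \bigl[a_k + a_{k+1} + R(a)_{k(k+1)},\; b_k + b_{k+1} + R(b)_{k(k+1)}\bigr].
\]
Evaluating the nine-term expansion with Proposition~\ref{prop:edk_bracket}, the two diagonal brackets $[a_k, b_k] + [a_{k+1}, b_{k+1}]$ contribute $u_k + u_{k+1}$, the bracket $[R(a)_{k(k+1)}, R(b)_{k(k+1)}]$ vanishes by part (iii), and the remaining six cross terms assemble, setting $\Phi(a,b) := \sum_i (\pa_i b) x_i \iota(\pa_i a)$, into
\[
\bigl(\Phi(a,b) - \Phi(b,a) + [R(a), b] + [a, R(b)]\bigr)_{k(k+1)}.
\]
This is exactly $R([a,b])_{k(k+1)} = R(u)_{k(k+1)}$ by the defining recursion \eqref{eq:Rrecursive} of $R$, closing the induction.

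The main obstacle I anticipate is the signed bookkeeping of the six cross terms and the matching of their sum with the right-hand side of \eqref{eq:Rrecursive}; once the vanishing $[R(a)_{k(k+1)}, R(b)_{k(k+1)}] = 0$ is observed and the six pieces are collected in view of Proposition~\ref{prop:edk_bracket}(ii), the identification with $R([a,b])$ is direct, and the remaining cases reduce to reading off $\delta_{m+k}$ on generators.
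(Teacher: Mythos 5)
Your proposal is correct and follows essentially the same route as the paper: the case $u_k$ is handled by induction on the Lie degree, using the formula for $\delta_{m+k}$ on generators, Proposition~\ref{prop:edk_bracket} to evaluate the expanded bracket, and the recursion \eqref{eq:Rrecursive} to recognize $R([a,b])_{k(k+1)}$, while the remaining cases are read off from the action on generators. Your treatment of $w_{ik}$ (multilinear expansion over $\epsilon_l\in\{k,k+1\}$ killed by Proposition~\ref{prop:edk_bracket}(ii)) is a mild repackaging of the paper's argument, which instead observes directly that the cross-terms vanish because $[a_{jk},a_{l(k+1)}]\in{\sf c}$ and $a_{j(k+1)}$ commutes with $c_{ik}$; both are valid.
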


\begin{proof}
We will prove the formula $\delta_{m+k}(u_k) = u_k + u_{k+1} + R(u)_{k(k+1)}$ and $\delta_{m+k}(w_{ik}) = w_{ik} + w_{i(k+1)}$ only. 
The proof of the other formulas is rather straightforward, so we omit it.

First we prove that $\delta_{m+k}(u_k) = u_k + u_{k+1} + R(u)_{k(k+1)}$.
This is true in degree one, since $\delta_{m+k}(a_{ik}) = a_{ik} + a_{i(k+1)}$.
Assume that $\deg u > 1$, we have $u = [a, b]$ for some homogeneous elements $a, b$, and
\[
\delta_{m+k}(a_k) = a_k + a_{k+1} + R(a)_{k(k+1)}, 
\quad
\delta_{m+k}(b_k) = b_k + b_{k+1} + R(b)_{k(k+1)}
\]
for some $R(a), R(b) \in \ass_m$.
Then, we have 
\begin{align*}
\delta_{m+k}(u_k) &= [\delta_{m+k}(a_k), \delta_{m+k}(b_k)] \\
&= [a_k + a_{k+1} + R(a)_{k(k+1)}, b_k + b_{k+1} + R(b)_{k(k+1)}].
\end{align*}
Computing the right hand side using Proposition~\ref{prop:edk_bracket}, we obtain
\[
u_k + u_{k+1} + \big(
[R(a), b] + [a, R(b)] + 
\sum_{i=1}^m \left( (\pa_i b) x_i \iota(\pa_i a) - (\pa_i a) x_i \iota(\pa_i b) \right) \big)_{k(k+1)}.
\]
This completes the proof.

Next we show that $\delta_{m+k}(w_{ik}) = w_{ik} + w_{i(k+1)}$.
We have
\begin{align*}
\delta_{m+k}(w_{ik}) &= 
{\rm ad}_{w(a_{1k}+a_{1(k+1)},\ldots,a_{mk}+a_{m(k+1)})}(c_{ik}+c_{i(k+1)}) \\
&= {\rm ad}_{w(a_{1k}+a_{1(k+1)},\ldots,a_{mk}+a_{m(k+1)})}(c_{ik}) \\
& \qquad + {\rm ad}_{w(a_{1k}+a_{1(k+1)},\ldots,a_{mk}+a_{m(k+1)})}(c_{i(k+1)}).
\end{align*}
Since $a_{j(k+1)}$ and $c_{ik}$ commute and the Lie bracket of $a_{jk}$ and $a_{l(k+1)}$ lies in ${\sf c}$, the first term is equal to ${\rm ad}_{w(a_{1k},\ldots,a_{mk})}(c_{ik}) = w_{ik}$.
Similarly, the second term is equal to $w_{i(k+1)}$. 
This completes the proof.
\end{proof}

Finally, we describe the map $\vartheta_m$.

\begin{lem} \label{lem:edk_taum}
We have the following:
\begin{align*}
\vartheta_m(u_i) &= u(x_1,\ldots,x_{m-1},0)_{i+1} + \big( (\pa_m  u)(x_1,\ldots,x_{m-1},0) \big)_{1(i+1)}, \\
\vartheta_m(w_{ij}) &= w(x_1,\ldots,x_{m-1},0)_{(i+1)(j+1)}.
\end{align*}
\end{lem}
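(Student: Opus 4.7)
The plan is to verify both formulas by reducing to the explicit definitions of $\vartheta_m$ on generators of $\dk_{m,n}$ (Section~\ref{subsec:cfc_mps}) and then using the bracket calculus in $\edk_{m-1,n+1}$ supplied by Proposition~\ref{prop:edk_bracket}. The two formulas are somewhat different in flavor, so I would handle them separately.

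For the second formula, I would compute directly. By the description in Lemma~\ref{lem:edk_direct}, $w_{ij}$ is represented by the iterated bracket $\mathrm{ad}_{w(a_{1j},\ldots,a_{mj})}(c_{ij})$, so expanding $w$ as a sum of monomials $x_{i_1}\cdots x_{i_d}$ and applying the Lie homomorphism $\vartheta_m$ gives $\mathrm{ad}_{\vartheta_m(a_{i_1 j})}\cdots \mathrm{ad}_{\vartheta_m(a_{i_d j})}(c_{(i+1)(j+1)})$. The innermost element $c_{(i+1)(j+1)}$ lies in $\mathsf{c}_{m-1,n+1}$, and since $\mathsf{c}$ is a Lie ideal, all intermediate expressions remain in $\mathsf{c}$. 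Consequently, whenever some $i_\ell = m$ (so that $\vartheta_m(a_{m j}) = c_{1(j+1)}$ is itself in $\mathsf{c}$ and enters the chain), the monomial contributes an element of $[\mathsf{c},\mathsf{c}]$ and thus vanishes in $\edk_{m-1,n+1}$. The surviving contributions are exactly the monomials of $w$ with no $x_m$, and these reassemble into $w(x_1,\ldots,x_{m-1},0)_{(i+1)(j+1)}$.

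For the first formula I would induct on the bracket complexity of $u \in \lie_m$. The base case $u = x_j$ is immediate: if $j<m$ then $\vartheta_m(a_{ji})=a_{j(i+1)}=(x_j)_{i+1}$ and $\pa_m x_j = 0$; if $j=m$ then $\vartheta_m(a_{mi})=c_{1(i+1)}=1_{1(i+1)}$ while $u(\ldots,0)=0$ and $\pa_m x_m = 1$. For the inductive step, write $u=[u',u'']$, set $A=u'(x_1,\ldots,x_{m-1},0)$, $A'=(\pa_m u')(x_1,\ldots,x_{m-1},0)$ and similarly $B,B'$, and expand
\[
\vartheta_m(u_i) = [A_{i+1}+A'_{1(i+1)},\, B_{i+1}+B'_{1(i+1)}].
\]
Using Proposition~\ref{prop:edk_bracket}, the four cross terms evaluate to $[A,B]_{i+1}$, $(AB')_{1(i+1)}$, $-(BA')_{1(i+1)}$, and $0$ (the last by part (iii)). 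Summing and invoking the derivation identity \eqref{eq:pa([u,v])} in the form $\pa_m[u',u''] = u'\pa_m u'' - u''\pa_m u'$, the total is precisely $u(x_1,\ldots,x_{m-1},0)_{i+1} + ((\pa_m u)(x_1,\ldots,x_{m-1},0))_{1(i+1)}$, closing the induction.

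The main obstacle is keeping track of where each term lives in the direct-sum decomposition \eqref{eq:edk_decomp} so that Proposition~\ref{prop:edk_bracket} can be applied: the $A_{i+1}$ pieces sit in $(\lie_{m-1})_{i+1}$, while the $A'_{1(i+1)}$ pieces sit in $(\ass_{m-1}[-1])_{1(i+1)}$, and the cross-bracket formula $[u_k, w_{jk}] = (uw)_{jk}$ with $j=1$, $k=i+1$ is what converts associative products of $A,A',B,B'$ into the correct chord-diagram component. Once this bookkeeping is in place, the identity on partial derivatives is just the Leibniz rule for $\pa_m$, and the proof goes through cleanly.
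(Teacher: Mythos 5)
Your proof is correct and follows essentially the same route as the paper: the second formula is the same $[{\sf c},{\sf c}]$-vanishing argument (substituting $0$ for $c_{1(j+1)}$ inside the ${\rm ad}$-chain), and the first is the same induction via Proposition~\ref{prop:edk_bracket} and the Leibniz identity~\eqref{eq:pa([u,v])}, whose details the paper omits as straightforward but which you have carried out correctly.
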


\begin{proof}
The proof of the first formula is similar to the proof of the formula for $\delta_{m+k}(u_k)$ in Lemma~\ref{lem:edk_cab_strnd}.
We denote by $H(u)$ the right hand side of the formula.
We first check that the formula holds true in degree one.
Now let $a, b \in \lie_m$ and assume that $\vartheta_m(a_i) = H(a)$ and $\vartheta_m(b_i) = H(b)$.
Then, by a direct computation using Proposition~\ref{prop:edk_bracket} and formula~\eqref{eq:pa([u,v])}, we verify that 
$\vartheta_m([a,b]_i) = [H(a), H(b)]$ is equal to $H([a,b])$.
Since this is straightforward, we omit the detail.

To prove the second formula, modulo $[{\sf c}, {\sf c}]$ we compute
\begin{align*}
\vartheta_m(w_{ij}) &= {\rm ad}_{w(a_{1(j+1)},\ldots,a_{(m-1)(j+1)},c_{1(j+1)})}(c_{(i+1)(j+1)}) \\
&= {\rm ad}_{w(a_{1(j+1)},\ldots,a_{(m-1)(j+1)},0)}(c_{(i+1)(j+1)}) \\
&= w(x_1,\ldots,x_{m-1},0)_{(i+1)(j+1)}.
\qedhere
\end{align*}
\end{proof}

\section{Homomorphic expansions for mixed braids} \label{sec:hexpmb}

In \cite{BN98}, the category ${\bf PaB}$ of parenthesized braids was introduced, and it was shown that the Drinfeld associators give rise to $1$-formality isomorphisms (homomorphic expansions) for this category.
In this section, we extend this formalism to mixed braids.

\subsection{Parenthesized mixed braids and chord diagrams} \label{subsec:parem}

We need some notation.
Let ${\bf Par}_{\rb \bb} = \bigsqcup_{m,n \ge 0} {\bf Par}_{m,n}$ be the set of parenthesized words in two letters $\rb$ and $\bb$, where ${\bf Par}_{m,n}$ is the subset consisting of parenthesized words with $m$ red bullets and $n$ blue bullets.
For example, $(\rb \rb)\bb \in {\bf Par}_{2,1}$ and $\rb(\rb(\bb \rb)) \in {\bf Par}_{3,1}$.
For $O \in {\bf Par}_{m,n}$, let $f(O)=\overline{O} \in {\bf Par}_{m,0}$ be the parenthesized word in $\rb$ obtained by forgetting all the blue bullets in $O$, and let $p(O) = o \in W_{m,n}$ be the word obtained by forgetting the parenthesization of $O$.
For example, if $O = \rb(\rb(\bb \rb))$, then
$\overline{O} = \rb(\rb \rb)$ and $o = \rb \rb \bb \rb$.

First we define the category ${\bf PaMB}$ of parenthesized mixed braids.
The set of objects is ${\bf Par}_{\rb \bb}$.
Let $O, O' \in {\bf Par}_{\rb \bb}$ with $f(O) = \overline{O}$, $f(O') = \overline{O'}$, $p(O) = o$ and $p(O') =o'$.
Then the set of morphisms from $O$ to $O'$ is
\[
{\bf PaMB}(O, O') := \begin{cases}
{\bf MB}(o,o') & \text{if $\overline{O} = \overline{O'}$}, \\
\emptyset & \text{otherwise}.
\end{cases}
\]
The composition is defined using that of ${\bf MB}$.
Note that there are no morphisms from $O$ to $O'$ unless $\overline{O} = \overline{O'}$.
For example, we have no morphism from $(\rb \rb) \rb$ to $\rb(\rb \rb)$.
When we draw pictures of morphisms in ${\bf PaMB}$, which are represented by linear combinations of mixed braids, we use the same convention used for ${\bf PaB}$ in \cite{BN98}.
Namely, we draw the bottom and top ends of mixed braids so that their distances respect their ``distances'' in the parenthesization of the source and domain of the morphism. 

\begin{example} \label{exple:pamor}
In the following two pictures, the first one shows a morphism from $(\rb \rb)\bb$ to $\rb (\rb \bb)$, and the second one from $\rb(\bb(\bb \rb))$ to $(\bb \rb)(\bb \rb)$.
\[
 \begin{tikzpicture}[baseline=12pt, x=1.5mm, y=2mm]
\draw[rs] (0,0) -- (0,6);
\draw[rs] (1,0) -- (1,1) -- (4,5) -- (4,6);
\draw[bs] (5,0) -- (5,6);
\end{tikzpicture}
\hspace{6em}
\begin{tikzpicture}[baseline=18pt, x=1.5mm, y=2mm]
\draw[rs] (0,0) -- (2,9);
\draw[bs] (8,0) -- (9.4, 1.4);
\draw[bs] (10,2) -- (12,4);
\draw[bs] (12,0) -- (2,7.5);
\draw[bs] (2-0.6, 7.5+0.45) -- (0,9);
\draw[rs] (14,0) -- (14,4);
\draw[bs] (12,4) -- (13-0.2,5.25-0.25);
\draw[bs] (13+0.25,5.25+0.25) -- (14,6.5);
\draw[bs] (14,6.5) -- (12,9);
\draw[rs] (14,4) -- (12,6.5);
\draw[rs] (12,6.5) -- (13-0.25,7.75-0.25);
\draw[rs] (13+0.25,7.75+0.25) --(14,9);
\end{tikzpicture}
\]
\end{example}

Next we define the category ${\bf PaMCD}$.
The set of objects is the same as the set of objects of ${\bf PaMB}$, namely ${\bf Par}_{\rb \bb}$.
The set of morphisms from $O$ to $O'$ is
\[
{\bf PaMCD}(O,O') := \begin{cases}
\mathcal{A}_{m,n} \times \mathfrak{S}_{\rb \bb}(o,o') & \text{if $\overline{O} = \overline{O'}$}, \\
\emptyset & \text{otherwise}.
\end{cases}
\]
Here, in the first case, $(m,n)$ is the type of $o \in W_{m,n}$.
By definition, a morphism in ${\bf PaMCD}$ is of the form $(u,\sigma)$, where $u \in \mathcal{A}_{m,n}$ and $\sigma$ is a mixed permutation of type $(m,n)$.
Recall that $u$ is expressed as a linear combination of words in $a_{ij}$ and $c_{ij}$, which are interpreted as horizontal chords.
We draw $u$ on the picture of $\sigma$ so that $a_{ij}$ (resp. $c_{ij}$) becomes a chord connecting the $i$th red strand and $j$th blue strand (resp. the $i$th and $j$th blue strands), where we count the strands at the bottom.
We also express the information on the parenthesization using the distances between endpoints.
For example, 
\[
\begin{tikzpicture}[x=2mm, y=3mm]
\draw[bs] (0,0) -- (0,2) -- (3,3);
\draw[bs] (1,0) -- (1,2) -- (0,3);
\draw[rs] (4,0) -- (4,2) -- (4,3);
\draw (0,0.5) -- (1,0.5);
\draw (1,1.5) -- (4,1.5);
\end{tikzpicture}
\]
is a morphism from $(\bb \bb)\rb$ to $\bb (\bb \rb)$ which corresponds to
$\big( c_{12}a_{12}, 
\begin{tikzpicture}[baseline=3pt, x=2mm, y=2mm]
\draw[bs] (0,0) -- (1,2);
\draw[bs] (1,0) -- (0,2);
\draw[rs] (2,0) -- (2,2);
\end{tikzpicture} \, 
\big)
$.
In this view point, the composition in ${\bf PaMCD}$ is given by stacking of diagrams. 
For example, one has 
\[
\begin{tikzpicture}[baseline=10pt, x=2mm, y=3mm]
\draw[bs] (0,0) -- (0,2) -- (3,3);
\draw[bs] (1,0) -- (1,2) -- (0,3);
\draw[rs] (4,0) -- (4,2) -- (4,3);
\draw (0,0.5) -- (1,0.5);
\draw (1,1.5) -- (4,1.5);
\end{tikzpicture}
\, \cdot \,
\begin{tikzpicture}[baseline=10pt, x=2mm, y=3mm]
\draw[bs] (0,0) -- (0,2) -- (3,3);
\draw[bs] (3,0) -- (3,2) -- (4,3);
\draw[rs] (4,0) -- (4,2) -- (0,3);
\draw (3,1) -- (4,1);
\end{tikzpicture}
= 
\begin{tikzpicture}[baseline=30pt, x=2mm, y=4mm]
\draw[bs] (0,0) -- (0,2) -- (3,3);
\draw[bs] (1,0) -- (1,2) -- (0,3);
\draw[rs] (4,0) -- (4,2) -- (4,3);
\draw (0,0.5) -- (1,0.5);
\draw (1,1.5) -- (4,1.5);
\draw[bs] (0,3) -- (0,5) -- (3,6);
\draw[bs] (3,3) -- (3,5) -- (4,6);
\draw[rs] (4,3) -- (4,5) -- (0,6);
\draw (3,4) -- (4,4);
\draw[dotted] (-0.5, 3) -- (4.5, 3);
\end{tikzpicture}
= 
\begin{tikzpicture}[baseline=14pt, x=2mm, y=4mm]
\draw[bs] (0,0) -- (0,2) -- (4,3);
\draw[bs] (1,0) -- (1,2) -- (3,3);
\draw[rs] (4,0) -- (4,2) -- (0,3);
\draw (0,0.4) -- (1,0.4);
\draw (1,1) -- (4,1);
\draw (0,1.6) -- (4,1.6);
\end{tikzpicture}.
\]
More formally, the composition of (composable) morphisms $(u, \sigma)$ and $(u', \sigma')$ is given by $(u \sigma(u'), \sigma \sigma')$.
Here, through the restriction to the blue bullets, $\sigma$ induces a permutation of $\{ 1, \ldots, n\}$ and hence acts on $\mathcal{A}_{m,n}$.

The operadic operations to mixed braids and chord diagrams introduced in Section~\ref{subsec:cfc_mps} extend naturally to their parenthesized enhancements ${\bf PaMB}$ and ${\bf PaMCD}$, with an extra care for parenthesizations.
For the extension operations, we draw the ends of the added strand outer-most in the picture.
For the cabling operations, we draw the ends of the two newly created strands closest to each other.
For example, 
\[
\delta_0^s \left( \, 
\begin{tikzpicture}[baseline=8pt, x=4mm, y=4mm]
\pc{rs}{bs}{0,0}
\end{tikzpicture}
\, \right) = 
\begin{tikzpicture}[baseline=8pt, x=3mm, y=4mm]
\pc{rs}{bs}{0,0}
\draw[bs] (-2,0) -- (-2,2);
\end{tikzpicture}
\quad \text{and} \quad
\delta_1 \left( \, 
\begin{tikzpicture}[baseline=8pt, x=4mm, y=4mm]
\pc{rs}{bs}{0,0}
\end{tikzpicture}
\, \right) = 
\begin{tikzpicture}[baseline=10pt, x=4.5mm, y=4.5mm]
\draw[rs] (0.25,0) -- (0.25,0.6) -- (1,1.35) -- (1,2);
\draw[rs] (0,0) -- (0,0.7) -- (0.75,1.45) -- (0.75,2);
\draw[bs] (1,0) -- (1,0.5) -- (0.75, 0.75);
\draw[bs] (0,2) -- (0,1.5) -- (0.25,1.25); 
\end{tikzpicture} \, .
\]

To compare the categories ${\bf PaMB}$ and ${\bf PaMCD}$, we need to consider their completions.
On the one hand, ${\bf PaMB}$ is filtered.
By the same argument used for the ideal $J$ in Section~\ref{subsec:eb}, the augmentation ideal $I = I P_{m,n}$ and its powers extend naturally to a multiplicative filtration of the category ${\bf MB}$ and hence of ${\bf PaMB}$.
Therefore, one can define the $I$-adic completion $\widehat{{\bf PaMB}}$ and the associated graded ${\rm gr}\, {\bf PaMB}$.
On the other hand, ${\bf PaMCD}$ is graded.
The grading comes from the grading of the algebra $\mathcal{A}_{m,n}$.
Thus one can define the degree completion $\widehat{{\bf PaMCD}}$, where $\mathcal{A}_{m,n}$ is replaced with its degree completion $\widehat{\mathcal{A}}_{m,n}$.
The operadic operations on ${\bf PaMB}$ and ${\bf PaMCD}$ extend to their completions.

The isomorphism $\mathcal{A}_{\Q P_{m,n}} \cong \mathcal{A}_{m,n}$ proven in Proposition~\ref{prop:grpureemb} extends naturally to a canonical isomorphism
\begin{equation} \label{eq:pambmcd}
{\rm gr}\, {\bf PaMB} \cong {\bf PaMCD}
\end{equation}
which is the identity on the objects.
Moreover, one checks that this isomorphism respects all the operadic operations.

\begin{prop} \label{prop:paeb_gen}
The category ${\bf PaMB}$ is generated by the following morphisms, their inverses, and their images by repeated applications of the operadic operations in ${\bf PaMB}$:
\begin{equation} \label{eq:gen_paeb}
\sigma_{\rm ps}^+ = 
\begin{tikzpicture}[baseline=8.5pt, x=4mm, y=4mm]
\pc{rs}{bs}{0,0}
\end{tikzpicture},
\quad 
\sigma_{\rm ps}^- = 
\begin{tikzpicture}[baseline=8.5pt, x=4mm, y=4mm]
\nc{rs}{bs}{0,0}
\end{tikzpicture},
\quad
\alpha_{\rm pps} := 
\begin{tikzpicture}[baseline=14pt, x=1.5mm, y=2mm]
\draw[rs] (0,0) -- (0,6);
\draw[rs] (1,0) -- (1,1) -- (4,5) -- (4,6);
\draw[bs] (5,0) -- (5,6);
\end{tikzpicture},
\quad
\alpha_{\rm psp} :=
\begin{tikzpicture}[baseline=14pt, x=1.5mm, y=2mm]
\draw[rs] (0,0) -- (0,6);
\draw[bs] (1,0) -- (1,1) -- (4,5) -- (4,6);
\draw[rs] (5,0) -- (5,6);
\end{tikzpicture},
\quad
\alpha_{\rm spp} :=
\begin{tikzpicture}[baseline=14pt, x=1.5mm, y=2mm]
\draw[bs] (0,0) -- (0,6);
\draw[rs] (1,0) -- (1,1) -- (4,5) -- (4,6);
\draw[rs] (5,0) -- (5,6);
\end{tikzpicture}.
\end{equation}
\end{prop}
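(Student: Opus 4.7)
The plan is to follow Bar-Natan's strategy for the classical parenthesized braid category \cite{BN98}. The key topological input is Lambropoulou's description \cite{Lam00}: the pure mixed braid group $P_{m,n}$ is the pure braid group of $n$ strands in the plane with $m$ fixed punctures, so every mixed braid admits a representative in which the poles are drawn as vertical straight lines and the only crossings are between two strands or between a strand and a pole. Applying general position in the vertical direction to such a representative, any morphism $\beta \in {\bf PaMB}(O,O')$ factors as a composition of elementary pieces, each of which is either a single crossing between two adjacent objects of the current word, or an elementary reparenthesization $(XY)Z \to X(YZ)$ involving three adjacent parenthesized subwords. It therefore suffices to express every such elementary piece as an operadic image of the listed generators or of their inverses.

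For an elementary crossing between adjacent objects, the ambient word contains only bystanders to the left and to the right of the pair; such a crossing is obtained from the appropriate basic crossing by repeatedly applying the extension operations to attach bystander poles and strands on both sides in the correct positions. The basic pole--strand crossing is $\sigma_{\rm ps}^{\pm}$, whose inverse supplies the opposite signature. A basic strand--strand crossing is produced from $\sigma_{\rm ps}^{\pm}$ by applying the pole-to-strand operation $\vartheta_1$, which converts the unique pole into a strand and yields a bare two-strand crossing. (In the rare case where a strand is drawn very far from a pole it must first be brought adjacent by a sequence of prior elementary crossings, but this is a reduction internal to the Morse decomposition and does not require additional generators.)

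For an elementary reparenthesization, cabling operations are used to expand each of the three bullets of a basic three-bullet associator into a full parenthesized subword $X, Y, Z$. It thus suffices to produce the three-bullet associator $\alpha_{ABC}$ for each $(A,B,C) \in \{\rb,\bb\}^3$. The case $\alpha_{\rb\rb\rb}$ is never needed, because the source $(\rb\rb)\rb$ and target $\rb(\rb\rb)$ have distinct parenthesized forgetful words and therefore admit no morphism in ${\bf PaMB}$. Of the remaining seven cases, three ($\alpha_{\rm pps}, \alpha_{\rm psp}, \alpha_{\rm spp}$) are the given generators; the other four are obtained by applying $\vartheta$ one or two times to convert poles into strands, for instance $\vartheta_1(\alpha_{\rm psp}) = \alpha_{\rm ssp}$ and $\vartheta_1 \vartheta_2(\alpha_{\rm pps}) = \alpha_{\rm sss}$.

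The main obstacle is not conceptual but combinatorial: one must verify that the various compositions of extension, cabling, and $\vartheta$ operations applied to the listed generators yield the intended elementary pieces with the correct ambient parenthesizations, rather than pieces differing from them by additional (and themselves generated) reparenthesizations. This verification is routine once one fixes the conventions from Section~\ref{subsec:parem} for how the operadic operations interact with parenthesization (endpoints of newly added strands drawn outer-most, endpoints of cabled strands drawn closest), and it parallels the analysis of ${\bf PaB}$ carried out by Bar-Natan in \cite{BN98}.
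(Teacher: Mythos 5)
Your proposal is correct in outline, but it follows a genuinely different route from the paper's proof. The paper first conjugates an arbitrary morphism to a loop at the standard object $O^*_{m,n}$ (red part followed by left-nested blue bullets), then invokes Lambropoulou's result \cite{Lam00} that the group $B^{\rm std}_{m,n}$ is generated by the $\alpha_{ij}$'s and the simple braids among blue strands, and expresses each of those specific group generators (which come with many bystander strands) as explicit compositions of operadic images of the basic morphisms; a separate step connects $O^*_{m,n}$ to an arbitrary $O$ with the same red part. You instead adapt Bar-Natan's original argument for ${\bf PaB}$ \cite{BN98}: a general-position slicing into single elementary crossings and elementary reparenthesizations, each handled by extensions, cabling, $\vartheta$, and rebracketing. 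Your approach avoids citing the group presentation and deals with genuinely minimal pieces, at the cost of having to justify that every needed intermediate rebracketing is itself a morphism of ${\bf PaMB}$ (i.e.\ preserves $\overline{O}$) and is reachable by valid elementary moves $(XY)Z\to X(YZ)$ with at least one red-free factor; the paper's step (ii) faces the same connectivity-of-parenthesizations issue and likewise only illustrates it by example, so you are at a comparable level of rigor. Your derivations of the strand--strand crossing as $\vartheta_1(\sigma_{\rm ps}^{\pm})$ and of the remaining associators $\alpha_{\rm pss},\alpha_{\rm sps},\alpha_{\rm ssp},\alpha_{\rm sss}$ via $\vartheta$ match what the paper uses, and your observation that $\alpha_{\rm ppp}$ is not a morphism is correct. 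Two small points: your parenthetical about a strand "drawn very far from a pole" needing to be "brought adjacent by prior crossings" is a red herring --- in a generic Morse slicing every crossing is already between geometrically adjacent strands, and the only adjustment needed is a rebracketing, not extra crossings; and note that placing bystanders at arbitrary positions requires extensions (which only add outermost strands) \emph{followed by} rebracketings, not extensions alone, so the crossing and reparenthesization parts of your argument are not independent.
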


\begin{proof}[Sketch of proof]
We say that a parenthesized mixed braid is basic if it is one of the morphisms listed in the statement of the proposition.
Let $m, n \ge 0$ and $\overline{O}^*_m \in {\bf Par}_{m,0}$.
Joining $\overline{O}^*_m$ and the left-nested parenthesization of $n$ blue bullets, we obtain a parenthesized word $O^*_{m,n} \in {\bf Par}_{m,n}$.
For example, if $\overline{O}^*_4 = (\rb \rb) (\rb \rb)$ then $O^*_{4,3} = (( \rb \rb) (\rb \rb))( ( \bb \bb ) \bb)$.

Let $m, n \ge 0$ and $O, O' \in {\bf Par}_{m,n}$ such that $\overline{O}^*_m := \overline{O} = \overline{O'}$.
Given any parenthesized mixed braids $\beta \in {\bf PaMB}(O^*_{m,n}, O)$ and $\beta' \in {\bf PaMB}(O^*_{m,n}, O')$, any morphism $\xi$ from $O$ to $O'$ decomposes as $\xi = \beta^{-1} ( \beta \xi {\beta'}^{-1}) \beta'$.
Therefore, to prove the proposition it is sufficient to show the following: given any parenthesization $\overline{O}^*_m \in {\bf Par}_{m,0}$,
\begin{enumerate}
\item[(i)] any parenthesized mixed braid from $O^*_{m,n}$ to itself decomposes into a product of basic morphisms, and
\item[(ii)] for any $O \in {\bf Par}_{m,n}$ with $\overline{O} = \overline{O}^*_m$, there is morphism from $O^*_{m,n}$ to $O$ which decomposes into a product of basic morphisms.
\end{enumerate}

To prove (i), note that the underlying mixed braid of a parenthesized one lies in the group $B_{m,n}^{\rm std}$ introduced in Section~\ref{subsec:mb}.
Since this group is generated by $\alpha_{ij}$'s and the simple braids among blue strands \cite[Section~4]{Lam00}, it is sufficient to deal with these generators.
We give two sample computations: 
\[
\begin{tikzpicture}[baseline=14pt, x=1.5mm, y=2mm]
\draw[rs] (-1,0) -- (-1,6);
\draw[bs] (6,0) -- (6,6);
\draw[bs] (7,0) -- (11,6);
\draw[bs] (11,0) -- (9.2,2.7);
\draw[bs] (8.8,3.3) -- (7,6);
\end{tikzpicture} \, = 
\, 
\begin{tikzpicture}[baseline=26pt, x=1.5mm, y=2.5mm]
\draw[rs] (-1,0) -- (-1,8);
\draw[bs] (6,0) -- (6,8);
\draw[bs] (7,0) -- (7,0.5) -- (10,2.5) -- (10,3);
\draw[bs] (11,0) -- (11,3);
\pc{bs}{bs}{10,3}
\draw[bs] (10,5) -- (10,5.5) -- (7,7.5) -- (7,8);
\draw[bs] (11,5) -- (11,8);
\draw[dotted] (-2, 3) -- (12, 3);
\draw[dotted] (-2, 5) -- (12, 5);
\end{tikzpicture}
= \delta_0^p (\alpha_{\rm sss}) \circ \delta_0^p (\delta_0^s (\sigma_{\rm ss})) 
\circ \delta_0^p (\alpha_{\rm sss}^{-1}),
\]
where we set $\sigma_{\rm ss} = \vartheta_1(\sigma_{\rm ps}^+)$ and $\alpha_{\rm sss} = \vartheta_1 (\vartheta_2 ( \alpha_{\rm pps}))$, and 
\[
\begin{tikzpicture}[baseline=14pt, x=1.5mm, y=2mm]
\draw[rs] (0,0) -- (0,3.4);
\draw[rs] (0,3.7) -- (0,6);
\draw[rs] (1,0) -- (1,2.05);
\draw[rs] (1,2.35) -- (1,3.75);
\draw[rs] (1,4.05) -- (1,6);
\draw[bs] (5,0) -- (5,0.7);
\draw[bs] (5,1) -- (5,5);
\draw[bs] (5,5.3) -- (5,6);
\draw[bs] (-0.4,2.6) to[bend left=90] (-0.4,3.4);
\draw[bs] (0.4,2.4) -- (6,0.5) -- (6,0);
\draw[bs] (-0.42,3.41) -- (6,5.5) -- (6,6);
\end{tikzpicture}
= 
\begin{tikzpicture}[baseline=48pt, x=1.5mm, y=1.5mm]
\draw[rs] (0,0) -- (0,10);
\draw[rs] (0,14) -- (0,24);
\draw[rs] (1,0) -- (1,4);
\draw[rs] (1,4) -- (1,4.2) -- (4,5.8) -- (4,6);
\draw[bs] (1,14) -- (1,14.2) -- (4,15.8) -- (4,16);
\draw[rs] (1,20) -- (1,24);
\draw[bs] (4,8) -- (4,8.2) -- (1,9.8) -- (1,10);
\draw[rs] (4,18) -- (4,18.2) -- (1,19.8) -- (1,20);
\draw[bs] (5,4) -- (5,6);
\draw[rs] (5,8) -- (5,16);
\draw[bs] (5,18) -- (5,20);
\draw[bs] (5,20) -- (5,20.2) -- (11,21.8) -- (11,22);
\draw[bs] (11,2) -- (11,2.2) -- (5,3.8) -- (5,4);
\draw[bs] (12,2) -- (12,22);
\nc{bs}{bs}{11,0}
\pc{rs}{bs}{0,10}
\pc{bs}{rs}{0,12}
\nc{rs}{bs}{4,6}
\pc{bs}{rs}{4,16}
\pc{bs}{bs}{11,22}
\draw[dotted] (-1,2) -- (13,2);
\draw[dotted] (-1,4) -- (13,4);
\draw[dotted] (-1,6) -- (13,6);
\draw[dotted] (-1,8) -- (13,8);
\draw[dotted] (-1,10) -- (13,10);
\draw[dotted] (-1,12) -- (13,12);
\draw[dotted] (-1,14) -- (13,14);
\draw[dotted] (-1,16) -- (13,16);
\draw[dotted] (-1,18) -- (13,18);
\draw[dotted] (-1,20) -- (13,20);
\draw[dotted] (-1,22) -- (13,22);
\end{tikzpicture}
= \xi \circ \delta_4^s (\delta_3^p (\sigma_{\rm ps}^+)) \circ \delta_4^s (\delta_3^p (\sigma_{\rm ps}^-)) \circ \xi^{-1},
\]
where $\xi = \delta_1(\delta_0^p(\sigma_{\rm ss}^{-1}))
\circ \delta_1( \vartheta_2(\alpha_{\rm pps}^{-1}))
\circ \delta_4^s(\alpha_{\rm pps}) \circ \delta_4^s(\delta_0^p(\sigma_{\rm ps}^-)) \circ \delta_4^s(\alpha_{\rm psp}^{-1})$.

For (ii), we give one example: 
\[
\begin{tikzpicture}[baseline=14pt, x=1.5mm, y=2mm]
\draw[bs] (0,6) -- (0.6,5.28);
\draw[bs] (1.05,4.84) -- (2.55,2.94);
\draw[bs] (2.95,2.46) -- (5,0);
\draw[rs] (1,6) -- (0,0);
\draw[rs] (5,6) -- (1,0);
\draw[bs] (6,6) -- (6,0);
\end{tikzpicture}
=
\begin{tikzpicture}[baseline=32pt, x=1.25mm, y=3mm]
\draw[bs] (0,8) -- (0,4);
\draw[rs] (1,8) -- (1,6);
\draw[rs] (9,8) -- (9,7.8) -- (4,6.2) -- (4,6);
\draw[bs] (10,8) -- (10,0);
\draw[rs] (1,6) -- (1,5.8) -- (3,4.2) -- (3,4);
\draw[rs] (4,6) -- (4,4);
\draw[rs] (3,4) -- (3,3.8) -- (0,2.4) -- (0,2);
\draw[rs] (4,4) -- (4,3.8) -- (1,2.4) -- (1,2);
\draw[bs] (0,4) -- (1.4,3.3);
\draw[bs] (2.4,2.8) -- (4,2);
\draw[rs] (0,2) -- (0,0);
\draw[rs] (1,2) -- (1,0);
\draw[bs] (4,2) -- (4,1.8) -- (9,0.2) -- (9,0);
\draw[dotted] (-1,2) -- (11,2);
\draw[dotted] (-1,4) -- (11,4);
\draw[dotted] (-1,6) -- (11,6);
\end{tikzpicture}
= \delta_1(\vartheta_2(\alpha_{\rm pps}^{-1})) \circ
\delta_4^s(\delta_2(\sigma_{\rm ps}^+)) \circ
\delta_4^s(\alpha_{\rm spp}^{-1}) \circ 
\vartheta_1(\delta_1(\alpha_{\rm pps})). 
\]
\end{proof}

In a similar fashion to the definition of ${\bf PaMB}$, for each $k \ge 1$ we define the parenthesized version of the category ${\bf MB}^{/k}$, which we denote by ${\bf PaMB}^{/k}$.
Likewise, for each $k \ge 1$ we define the category ${\bf PaMCD}^{/k}$.
The isomorphism \eqref{eq:pambmcd} descends to an isomorphism
$
{\rm gr}\, {\bf PaMB}^{/k} \cong {\bf PaMCD}^{/k}
$.
For $k=2$, we use the special notation ${\bf PaEB} = {\bf PaMB}^{/2}$ and ${\bf PaECD} = {\bf PaMCD}^{/2}$.

\subsection{Homomorphic expansions for mixed braids} \label{subsec:homo_expansion}

Here comes the concept of homomorphic expansions for the category ${\bf PaMB}$:

\begin{dfn} \label{def:expansion}
A {\it homomorphic expansion} for ${\bf PaMB}$ is a functor $Z^{\rm mb} : {\bf PaMB} \to \widehat{{\bf PaMCD}}$ which is the identity on the objects, preserves the filtrations, induces the identity at the associated graded, respects all the operadic operations, and is group-like.
\end{dfn}

The group-like condition in the above definition means that for each mixed braid $\beta$ of type $(m,n)$ one has $Z^{\rm mb}(\beta) = ( \exp(u), \pi(\beta))$, where $u$ is an element in $\widehat{\dk}_{m,n}$, the degree completion of $\dk_{m,n}$.

For each $k\ge 1$, we can formulate the concept of a homomorphic expansion for ${\bf PaMB}^{/k}$: it is defined to be a functor $Z^{{\rm mb}/k}: {\bf PaMB}^{/k} \to \widehat{{\bf PaMCD}^{/k}}$ satisfying the same conditions required for $Z^{\rm mb}$ in Definition~\ref{def:expansion}. 

Homomorphic expansions for ${\bf PaMB}$ exist by the following

\begin{prop} \label{prop:ass}
Any Drinfeld associator gives rise to a homomorphic expansion for ${\bf PaMB}$ and consequently to a homomorphic expansion for ${\bf PaMB}^{/k}$ for any $k \ge 1$.
\end{prop}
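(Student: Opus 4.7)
The plan is to build $Z^{\rm mb}$ by restricting Bar-Natan's homomorphic expansion $Z^{\rm pab} : {\bf PaB} \to \widehat{\bf PaCD}$ associated to $\Phi$ along the color-forgetting functor $F : {\bf PaMB} \to {\bf PaB}$. More precisely, the inclusion $\dk_{m,n} \hookrightarrow \dk_{m+n}$ from Remark~\ref{rem:dkmninj} induces an inclusion of completed linear categories $\iota : \widehat{\bf PaMCD} \hookrightarrow \widehat{\bf PaCD}$, and I will define $Z^{\rm mb}$ as the unique functor that lifts $Z^{\rm pab} \circ F$ through $\iota$.

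The central point to verify is that $Z^{\rm pab}(F(\beta))$ lies in the image of $\iota$ for every mixed braid $\beta$. Because the operadic operations on $\widehat{\bf PaMCD}$ extend to those on $\widehat{\bf PaCD}$ along $\iota$, Proposition~\ref{prop:paeb_gen} reduces this to a check on the basic generators. The cases $\sigma_{\rm ps}^\pm$ and $\alpha_{\rm psp}$ are immediate, since the $Z^{\rm pab}$-values $\exp(\pm t_{\rm ps}/2)$ and $\Phi(t_{\rm ps}, t_{\rm sp})$ already lie in $\widehat{\mathcal{A}}_{m,n}$.

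The hard part is the case of $\alpha_{\rm pps}$ (and symmetrically $\alpha_{\rm spp}$): here $Z^{\rm pab}$ returns $\Phi(t_{12}, t_{23})$ with the pole-pole chord $t_{12}$ not lying in $\dk_{2,1}$. The 4T relations nevertheless give $[t_{12}, t_{13}] = [t_{13}, t_{23}]$ and $[t_{12}, t_{23}] = -[t_{13}, t_{23}]$, both of which lie in $\dk_{2,1}$. Since ${\rm ad}_{t_{12}}$ is a derivation of $\widehat{\dk}_3$, a straightforward induction on the complexity of Lie words then shows that ${\rm ad}_{t_{12}}$ preserves $\widehat{\dk}_{2,1}$. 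Consequently $V := \widehat{\dk}_{2,1} + \mathbb{Q}\, t_{12}$ is a Lie subalgebra of $\widehat{\dk}_3$ with $[V, V] \subset \widehat{\dk}_{2,1}$. Since $\log \Phi$ lies in the commutator subalgebra $[\widehat{\lie}(x,y), \widehat{\lie}(x,y)]$, I conclude $\Phi(t_{12}, t_{23}) - 1 \in \widehat{\dk}_{2,1}$, hence $\Phi(t_{12}, t_{23}) \in \widehat{\mathcal{A}}_{2,1}$ as required.

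Once $Z^{\rm mb}$ is constructed, the remaining axioms of a homomorphic expansion---preservation of filtrations, inducing the identity at the associated graded via~\eqref{eq:pambmcd}, compatibility with operadic operations, and group-likeness---are inherited by restriction from the corresponding properties of $Z^{\rm pab}$. For the second assertion, the associated graded identification of Proposition~\ref{prop:grpuremb2} (and its higher analogs) shows that $Z^{\rm mb}$ carries $J^k$ into the associative ideal generated by the Lie ideal ${\sf c}^{(k)}$, so it descends to a homomorphic expansion $Z^{{\rm mb}/k} : {\bf PaMB}^{/k} \to \widehat{{\bf PaMCD}^{/k}}$ for each $k \ge 1$. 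The main obstacle of the argument is the 4T-based verification for $\alpha_{\rm pps}$; the rest is routine transport of structure.
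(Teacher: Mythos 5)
Your proposal is correct and follows essentially the same route as the paper: restrict Bar-Natan's expansion $Z^{\rm pb}$ along the faithful color-forgetting functors, reduce to the generators of Proposition~\ref{prop:paeb_gen}, and check that the values on the three $\alpha$'s land in $\exp(\widehat{\dk}_{2,1})$. Your 4T-based argument that ${\rm ad}_{t_{12}}$ preserves $\widehat{\dk}_{2,1}$ combined with $\log\Phi\in[\widehat{\lie}_2,\widehat{\lie}_2]$ is just a repackaging of the paper's observation that $t_{12}+t_{13}+t_{23}$ is central and $\varphi$ has no linear term, whence $\varphi(t_{12},t_{23})=\varphi(-t_{13}-t_{23},t_{23})$ (the paper's version has the small added benefit of producing the explicit formula for $\Phi_{\rm pps}$ used later).
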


\begin{proof}
Let $\Phi$ be a Drinfeld associator. 
It is of the form $\Phi = \exp(\varphi)$, where $\varphi = \varphi(x,y) \in \widehat{\lie}_2$ is a Lie series without linear term.
As shown in \cite[Proposition~3.4]{BN98}, $\Phi$ extends to a functor $Z^{\rm pb} : {\bf PaB} \to \widehat{\bf PaCD}$, where the target is the degree completion of the category of parenthesized (horizontal) chord diagrams.
The category ${\bf PaB}$ is generated (in the same sense as in Proposition~\ref{prop:paeb_gen}) by the elements
$\sigma = 
\begin{tikzpicture}[baseline=3pt, x=2mm, y=2mm]
\pc{black}{black}{0,0}
\end{tikzpicture}$
and  
$\alpha = 
\begin{tikzpicture}[baseline=8pt, x=0.9mm, y=1.2mm]
\draw (0,0) -- (0,6);
\draw (1,0) -- (1,1) -- (4,5) -- (4,6);
\draw (5,0) -- (5,6);
\end{tikzpicture}$
(see \cite[Claim~2.6]{BN98}).
The functor $Z^{\rm pb}$ is specified by the values on these generators:
\[
Z^{\rm pb}( \sigma) :=  \left( \exp \left( \frac{1}{2} t_{12} \right),
\begin{tikzpicture}[baseline=8pt, x=1.8mm, y=3.6mm]
\draw (0,0) -- (2,2);
\draw (2,0) -- (0,2);
\end{tikzpicture}\, \right),
\qquad
Z^{\rm pb}(\alpha):= \left( \Phi(t_{12}, t_{23}), 
\begin{tikzpicture}[baseline=8pt, x=0.9mm, y=1.2mm]
\draw (0,0) -- (0,6);
\draw (1,0) -- (4,6);
\draw (5,0) -- (5,6);
\end{tikzpicture}\, \right).
\]

There are functors ${\bf PaMB} \to {\bf PaB}$ and $\widehat{{\bf PaMCD}} \to \widehat{{\bf PaCD}}$ obtained by forgetting the colors of poles and strands.
These functors are faithful.
For the former, this follows from the fact that $P_{m,n}$ is a subgroup of $P_{m+n}$.
For the latter, this follows from the injectivity of the map $\dk_{m,n} \to \dk_{m+n}$; see Remark~\ref{rem:dkmninj}.

We will show that $Z^{\rm pb}$ induces a functor $Z^{\rm mb} : {\bf PaMB} \to \widehat{{\bf PaMCD}}$ which is the identity on the objects.
In view of the faithfulness of the forgetful functors above, it is sufficient to prove the following claim:

\begin{claim}
Let $\beta$ be a mixed braid with $m$ poles and $n$ strands which represents a morphism in ${\bf PaMB}$.
Let $\beta^0$ be the parenthesized braid obtained by forgetting the colors of $\beta$ and write $Z^{\rm pb}(\beta^0) = (B, \pi(\beta^0))$, where $B \in \exp(\widehat{\dk}_{m+n})$ and $\pi(\beta^0)$ is the parenthesized permutation induced by $\beta^0$.
Then, $B$ lies in $\exp(\widehat{\dk}_{m,n})$, where we view $\dk_{m,n}$ as a Lie subalgebra of $\dk_{m+n}$.
\end{claim}

\begin{proof}[Proof of the claim]
Basically, this is because there is no crossing between the strands in $\beta^0$ which were poles of $\beta$.
More details are as follows.

{\em Step 1.}
Assume that $\beta$ is one of the elements in \eqref{eq:gen_paeb}.
Then, $\beta^0$ is either $\sigma^{\pm}$ or $\alpha$.
If $\beta^0 = \sigma^{\pm}$, then $B = \exp \left( \pm \frac{1}{2} t_{12} \right) = \exp \left( \pm \frac{1}{2} a_{11} \right) \in \exp(\widehat{\dk}_{1,1})$.
If $\beta^0 = \alpha$, there are three possibilities: (i) $\beta = \alpha_{\rm pps}$; (ii) $\beta = \alpha_{\rm psp}$; (iii) $\beta = \alpha_{\rm spp}$.
Since $\varphi$ has no linear term, we have $\varphi(-t_{13}-t_{23}, t_{23}) = \varphi(t_{12}, t_{23}) = \varphi(t_{12}, -t_{12}-t_{13})$.
Hence, we have $B = \Phi(-a_{11}-a_{21},a_{21})$ in case~(i), $B = \Phi(a_{11},a_{21})$ in case~(ii), and $B = \Phi(a_{11},-a_{11}-a_{21})$ in case~(iii).
In all cases, we obtain that $B \in \exp(\widehat{\dk}_{2,1})$.

{\em Step 2.}
By Proposition~\ref{prop:paeb_gen} we can decompose $\beta$ into a product of basic morphisms (in the sense of the proof of Proposition~\ref{prop:paeb_gen}).
By Step 1, any basic morphism is sent by $Z^{\rm pb}$ to a morphism in $\widehat{{\bf PaCD}}$ whose first component lies in $\exp(\widehat{\dk}_{m,n})$.
Hence the same is true for $\beta$, and the claim follows.
\end{proof}

The functor $Z^{\rm pb}$ is filtration-preserving, induces the identity at the associated graded, and respects all the operadic operations in ${\bf PaB}$.
Therefore, the induced functor $Z^{\rm mb}$ satisfies all the required properties for a homomorphic expansion for ${\bf PaMB}$.
This completes the proof of Proposition~\ref{prop:ass}.
\end{proof}

\begin{remark} \label{rem:torsor}
Following Bar-Natan's interpretation \cite{BN98} of the set ${\sf Assoc}_1$ of Drinfeld associators and the Grothendieck-Teichm\"{u}ller groups ${\rm GT}_1$ and ${\rm GRT}_1$, let ${\sf Assoc}_1^{\rm mb}$ be the set of homomorphic expansions for ${\bf PaMB}$, and let ${\rm GT}_1^{\rm mb}$ (resp.\,${\rm GRT}_1^{\rm mb}$) be the group of structure preserving automorphisms of $\widehat{{\bf PaMB}}$ (resp.\,$\widehat{\bf PaMCD}$).
Here, ``structure preserving'' means the same as in Definition~\ref{def:expansion}.
Then Proposition~\ref{prop:ass} shows that there is an (injective) map ${\sf Assoc}_1 \to {\sf Assoc}_1^{\rm mb}$ (implying that ${\sf Assoc}_1^{\rm mb}$ is nonempty), and there is a bitorsor structure 
\[
{\rm GRT}_1^{\rm mb}
\curvearrowright
{\sf Assoc}_1^{\rm mb}
\curvearrowleft
{\rm GT}_1^{\rm mb}
\]
where the actions are given by pre-composition and post-composition.
Using the same argument as the proof of Proposition~\ref{prop:ass}, one can define group homomorphisms ${\rm GT}_1 \to {\rm GT}_1^{\rm mb}$ and ${\rm GRT}_1 \to {\rm GRT}_1^{\rm mb}$, and make the map ${\sf Assoc}_1 \to {\sf Assoc}_1^{\rm mb}$ a morphism of bitorsors.
\end{remark}

Are there any other ways to obtain homomorphic expansions for ${\bf PaMB}$?
In other words using the notation in Remark~\ref{rem:torsor}, is the map ${\sf Assoc}_1 \to {\sf Assoc}_1^{\rm mb}$ surjective?
By Proposition~\ref{prop:paeb_gen}, any $Z^{\rm mb}$ is specified by values on the basic morphisms in \eqref{eq:gen_paeb}.
For the first and second elements, the group-like condition for $Z^{\rm mb}$ implies that
$Z^{\rm mb} ( \sigma_{\rm ps}^+) =
\big( \exp( \lambda a_{11}), \begin{tikzpicture}[baseline=3pt, x=1mm, y=2mm]
\draw[rs] (0,0) -- (2,2);
\draw[bs] (2,0) -- (0,2);
\end{tikzpicture} \big)$ and 
$Z^{\rm mb} (\sigma_{\rm ps}^-) =
\big( \exp( \mu a_{11}), \begin{tikzpicture}[baseline=3pt, x=1mm, y=2mm]
\draw[rs] (0,0) -- (2,2);
\draw[bs] (2,0) -- (0,2);
\end{tikzpicture} \big)$
 for some $\lambda, \mu \in \Q$.
 Applying the operation $\vartheta_1$ to the first equation, we obtain 
 $Z^{\rm mb} \big( 
\begin{tikzpicture}[baseline=3pt, x=2mm, y=2mm]
\pc{bs}{bs}{0,0}
\end{tikzpicture}
\big) = \big( \exp( \lambda c_{12}), \begin{tikzpicture}[baseline=3pt, x=1mm, y=2mm]
\draw[bs] (0,0) -- (2,2);
\draw[bs] (2,0) -- (0,2);
\end{tikzpicture} \big)$
and 
$
Z^{\rm mb}\big( (\tau_{11}, \begin{tikzpicture}[baseline=3pt, x=1mm, y=2mm]
\draw[bs] (0,0) -- (0,2);
\draw[bs] (2,0) -- (2,2);
\end{tikzpicture}) \big)
= Z^{\rm mb} \big( 
\begin{tikzpicture}[baseline=4pt, x=2mm, y=1.25mm]
\pc{bs}{bs}{0,0}
\pc{bs}{bs}{0,2}
\end{tikzpicture} \big)
= \big( \exp( 2 \lambda c_{12}), \begin{tikzpicture}[baseline=3pt, x=1mm, y=2mm]
\draw[bs] (0,0) -- (0,2);
\draw[bs] (2,0) -- (2,2);
\end{tikzpicture} \big)
= \big( 1 + 2 \lambda c_{12}, \begin{tikzpicture}[baseline=3pt, x=1mm, y=2mm]
\draw[bs] (0,0) -- (0,2);
\draw[bs] (2,0) -- (2,2);
\end{tikzpicture} \big)$.
Since ${\rm gr}\, Z^{\rm mb}$ is the identity, we obtain $\lambda = 1/2$.
Similarly, we obtain $\mu = -1/2$.
In summary, we have 
\begin{equation} \label{eq:Z_normalized}
Z^{\rm mb} (\sigma_{\rm ps}^+)
= \left( \exp \left( \frac{1}{2} a_{11} \right),
\begin{tikzpicture}[baseline=8pt, x=1.8mm, y=3.6mm]
\draw[rs] (0,0) -- (2,2);
\draw[bs] (2,0) -- (0,2);
\end{tikzpicture} \right),
\quad 
Z^{\rm mb} (\sigma_{\rm ps}^-)
= \left( \exp \left( -\frac{1}{2} a_{11} \right),
\begin{tikzpicture}[baseline=8pt, x=1.8mm, y=3.6mm]
\draw[rs] (0,0) -- (2,2);
\draw[bs] (2,0) -- (0,2);
\end{tikzpicture} \right).
\end{equation} 
For the other three morphisms in \eqref{eq:gen_paeb}, we write
\[
Z^{\rm mb}(\alpha_{\rm pps})=\left( \Phi_{\rm pps}, 
\begin{tikzpicture}[baseline=8pt, x=0.9mm, y=1.2mm]
\draw[rs] (0,0) -- (0,6);
\draw[rs] (1,0) -- (4,6);
\draw[bs] (5,0) -- (5,6);
\end{tikzpicture} \, 
\right),
\quad
Z^{\rm mb}(\alpha_{\rm psp})=\left( \Phi_{\rm psp},
\begin{tikzpicture}[baseline=8pt, x=0.9mm, y=1.2mm]
\draw[rs] (0,0) -- (0,6);
\draw[bs] (1,0) -- (4,6);
\draw[rs] (5,0) -- (5,6);
\end{tikzpicture} \, 
\right),
\]
\[
Z^{\rm mb}(\alpha_{\rm spp})=\left( \Phi_{\rm spp},
\begin{tikzpicture}[baseline=8pt, x=0.9mm, y=1.2mm]
\draw[bs] (0,0) -- (0,6);
\draw[rs] (1,0) -- (4,6);
\draw[rs] (5,0) -- (5,6);
\end{tikzpicture} \, 
\right).
\]
By the group-like condition for $Z^{\rm mb}$, the first component of these three elements lie in $\exp(\widehat{\dk}_{2,1}) \subset \widehat{\mathcal{A}}_{2,1}$ so that one can write $\Phi_{\rm pps} = \exp(\varphi_{\rm pps})$, $\Phi_{\rm psp} = \exp(\varphi_{\rm psp})$, and $\Phi_{\rm spp} = \exp(\varphi_{\rm spp})$ for some $\varphi_{\rm pps}, \varphi_{\rm psp}, \varphi_{\rm spp} \in \widehat{\dk}_{2,1}$.

To obtain a well-defined functor $Z^{\rm mb}$, the values $\Phi_{\rm pps}$, $\Phi_{\rm psp}$ and $\Phi_{\rm spp}$ have to satisfy several equations coming from equalities among parenthesized mixed braids.
We focus on the following ppss-pentagon equality
\[
\begin{tikzpicture}[baseline=33pt, x=2mm, y=3mm]
\draw[rs] (0,0) -- (0,9);
\draw[rs] (1,0) -- (1,0.5) -- (3,2.5) -- (3,3) -- (3,3.5) -- (9,5.5) -- (9,6) -- (9,9);
\draw[bs] (4,0) -- (4,3) -- (4,3.5) -- (10,5.5) -- (10,6) -- (10,6.5) -- (12,8.5) -- (12,9);
\draw[bs] (13,0) -- (13,9);
\draw[dotted] (-1,3) -- (14,3);
\draw[dotted] (-1,6) -- (14,6);
\end{tikzpicture}
= 
\begin{tikzpicture}[baseline=33pt, x=2mm, y=3mm]
\draw[rs] (0,0) -- (0,9);
\draw[rs] (1,0) -- (1,4.5) -- (1,5) -- (9,8.5) -- (9,9);
\draw[bs] (4,0) -- (4,0.5) -- (12,4) -- (12,4.5) -- (12,9);
\draw[bs] (13,0) -- (13,9);
\draw[dotted] (-1,4.5) -- (14,4.5);
\end{tikzpicture}
\]
as morphisms from $((\rb \rb)\bb) \bb$ to $\rb (\rb(\bb \bb))$.
It can be written as 
\[
d^4(\alpha_{\rm pps}) d^2(\alpha_{\rm pps}) d^0(\alpha_{\rm pps}) 
= d^1(\alpha_{\rm pps}) d^3(\alpha_{\rm pps}).
\]
Applying $Z^{\rm mb}$, we obtain
\begin{equation} \label{eq:PPSS-5gon}
d^4 (\Phi_{\rm pps}) d^2(\Phi_{\rm pps}) d^0(\Phi_{\rm pps}) = d^1(\Phi_{\rm pps}) d^3(\Phi_{\rm pps}).
\end{equation}
The linearization of this equation, which takes the following form:
\begin{equation} \label{eq:ppps5}
d^4 (\varphi_{\rm pps}) + d^2(\varphi_{\rm pps}) + d^0(\varphi_{\rm pps}) = d^1(\varphi_{\rm pps}) + d^3(\varphi_{\rm pps}),
\end{equation}
namely, $d^{2,1}(\varphi_{\rm pps}) = 0$.
We call this the linearized ppss-pentagon equation.

\begin{remark}
To give a complete description of homomorphic expansions for ${\bf PaMB}$, we need to know a (finite) presentation of the category ${\bf PaMB}$ in terms of the generators in \eqref{eq:gen_paeb}.
An analogous result is well known for ${\bf PaB}$ (see e.g., \cite[\S 3]{BN98}).
We do not pursue this issue in the present paper, but it should be useful for studying ${\sf Assoc}_1^{\rm mb}$, ${\rm GT}_1^{\rm mb}$ and ${\rm GRT}_1^{\rm mb}$.
\end{remark}

\subsection{Emergent pentagon equation} \label{subsec:56gon}

From now on, we consider the case $k=2$, namely ${\bf PaMB}^{/2} = {\bf PaEB}$, and focus on the linearized ppss-pentagon equation \eqref{eq:ppps5} in $\edk_{2,2}$.

\begin{prop} \label{prop:ppss5} 
Let $\varphi = \varphi(x,y) \in \lie_2$.
Then, $\varphi_1 = \varphi(a_{11},a_{21}) \in \edk_{2,1}$ satisfies $d^{2,1}(\varphi_1) = 0 \in \edk_{2,2}$ if and only if $\varphi$ satisfies the following two equations:
\begin{align}
& \varphi(y,0) - \varphi(x+y,0) = 0,  \label{eq:p1} \\
& (\partial_y \varphi)(x,y) + (\partial_y \varphi)(y,0) - (\partial_y \varphi)(x+y,0) - R(\varphi) = 0. \label{eq:ppss5-gon}
\end{align}
\end{prop}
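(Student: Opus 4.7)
The plan is a direct computation in $\edk_{2,2}$: evaluate each of the five terms $d_k(\varphi_1)$ for $k = 0, 1, 2, 3, 4$, express them via the graded direct sum decomposition~\eqref{eq:edk_decomp}
$\edk_{2,2} \cong (\lie_2)_1 \oplus (\lie_2)_2 \oplus (\ass_2[-1])_{12}$,
take the alternating sum, and read off the vanishing conditions. With $(m,n) = (2,1)$ we have $d_0, d_1, d_2 = \vartheta_3 \circ \delta_k$, while $d_3$ and $d_4$ act purely on the blue side.

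The two extreme terms are immediate. Lemma~\ref{lem:edk_cab_strnd} applied with $i = k = 1$ gives $d_3(\varphi_1) = \varphi_1 + \varphi_2 + R(\varphi)_{12}$, and $d_4(\varphi_1) = \varphi_1$ since extension on the right leaves every generator unchanged. For $d_0, d_1, d_2$, the first step is $\delta_k \colon \edk_{2,1} \to \edk_{3,1}$ computed by~\eqref{eq:del0} and~\eqref{eq:delk}; this produces $\tilde\varphi_1$ for $\tilde\varphi(x_1,x_2,x_3)$ equal respectively to $\varphi(x_2,x_3)$, $\varphi(x_1+x_2,x_3)$, $\varphi(x_1,x_2+x_3)$. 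Then Lemma~\ref{lem:edk_taum} yields
\[
\vartheta_3(\tilde\varphi_1) = \tilde\varphi(x_1,x_2,0)_{2} + \bigl((\partial_3 \tilde\varphi)(x_1,x_2,0)\bigr)_{12}.
\]
The only delicate step is evaluating $\partial_3 \tilde\varphi$. I would handle it by writing $\varphi = (\partial_x\varphi) x + (\partial_y\varphi) y$ and noting that, in each of the three substitutions above, the unique summand of $\tilde\varphi$ whose rightmost letter is $x_3$ is the one ending in the image of $y$. Hence $\partial_3 \tilde\varphi = (\partial_y\varphi)$ under the same substitution, and setting $x_3 = 0$ produces $(\partial_y\varphi)(x_2,0)$, $(\partial_y\varphi)(x_1+x_2,0)$, and $(\partial_y\varphi)(x_1,x_2)$ respectively.

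Writing $x = x_1, y = x_2$ and assembling, the $(\lie_2)_1$-component of $d^{2,1}(\varphi_1) = d_0 - d_1 + d_2 - d_3 + d_4$ is $-\varphi + \varphi = 0$ automatically; the $(\lie_2)_2$-component telescopes to $\varphi(y,0) - \varphi(x+y,0) + \varphi(x,y) - \varphi(x,y) = \varphi(y,0) - \varphi(x+y,0)$, coinciding with the left hand side of~\eqref{eq:p1}; and the $(\ass_2[-1])_{12}$-component is $(\partial_y\varphi)(y,0) - (\partial_y\varphi)(x+y,0) + (\partial_y\varphi)(x,y) - R(\varphi)$, matching the left hand side of~\eqref{eq:ppss5-gon}. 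Linear independence of the three summands in~\eqref{eq:edk_decomp} then yields the claimed equivalence. The main obstacle is purely bookkeeping: tracking indices through $\vartheta_3 \circ \delta_k$ and correctly identifying the partial-derivative formula across the relabelling between $\lie_3$ and $\lie_2$.
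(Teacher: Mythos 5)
Your proposal is correct and follows exactly the route of the paper's own proof: compute the five terms $d_k(\varphi_1)$ via formulas~\eqref{eq:del0}, \eqref{eq:delk} and Lemmas~\ref{lem:edk_cab_strnd}, \ref{lem:edk_taum}, and read off the components of the alternating sum in the decomposition~\eqref{eq:edk_decomp}. Your values for $d_0,\ldots,d_4$ agree with those in the paper, and your justification of the step $\pa_3\tilde\varphi=(\pa_y\varphi)$ under each substitution is a correct filling-in of a detail the paper leaves implicit.
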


\begin{proof}
Using formulas~\eqref{eq:del0}, \eqref{eq:delk} and Lemmas~\ref{lem:edk_cab_strnd}, \ref{lem:edk_taum}, we obtain that
\begin{align*}
d_0(\varphi_1) &= \varphi(y,0)_2 + (\pa_y \varphi)(y,0)_{12}, \\
d_1(\varphi_1) &= \varphi(x+y,0)_2 + (\pa_y \varphi)(x+y,0)_{12}, \\
d_2(\varphi_1) &= \varphi(x,y)_2 + (\pa_y \varphi)(x,y)_{12}, \\
d_3(\varphi_1) &= \varphi(x,y)_1 + \varphi(x,y)_2 + R(\varphi)_{12}, \\
d_4(\varphi_1) &= \varphi(x,y)_1.
\end{align*}
The assertion follows from this.
\end{proof}

\begin{remark} \label{rem:p1}
Equation~\eqref{eq:p1} says that the coefficient of $x$ in $\varphi$ is zero.
One can check that in degree one, solutions to equation~\eqref{eq:ppss5-gon} are scalar multiples of $x$.
Hence, there is no nontrivial solution to $d^{2,1}(\varphi_1) = 0$ in degree one, and equation~\eqref{eq:p1} is redundant in degrees at least two.
\end{remark}

Recall the definition of the Grothendieck-Teichm\"{u}ller Lie algebra $\grt_1$ by Drinfeld~\cite{Drinfeld}.
It is the space of $\psi \in \lie_2$ which satisfy the following relations:
\begin{align}
& \psi(x,y) = -\psi(y,x), \nonumber \\
& \psi(x,y) + \psi(y,-x-y) + \psi(-x-y,x) = 0, \nonumber \\
& \psi(t_{12},t_{2(34)}) + \psi(t_{(12)3}, t_{34}) = \psi(t_{23}, t_{34}) + \psi(t_{1(23)},t_{(23)4}) + \psi(t_{12},t_{23}). \label{eq:5-gon}
\end{align}
Here, the last equation \eqref{eq:5-gon}, called the pentagon equation, takes place in $\dk_4$ and $t_{2(34)} = t_{23} + t_{24}$, etc.
It is known that nontrivial elements in $\grt_1$ have degrees at least three.

The pentagon equation can be described in terms of a certain differential on $\dk_n$.
In fact, the differential on $\edk_{m,n}$ is induced from this differential.
Assume that $\psi \in \lie_2$ has degree at least two.
Then, it can be considered as an element in $\dk_3$ by the substitution $\psi \mapsto \psi(t_{12}, t_{23})$.
There are maps $d_i: \dk_3 \to \dk_4$ for $0 \le i \le 4$ defined in terms of extension and cabling operations, and $\psi$ is a solution to the pentagon equation if and only if $d^3(\psi) = \sum_{i=0}^4 (-1)^i d_i(\psi) = 0$.
Furthermore, through the isomorphism
\[
\dk_3 \cong \Q (t_{12} + t_{13} + t_{23})
\oplus \lie(t_{13}, t_{23}),
\]
$\psi(t_{12},t_{23})$ corresponds to $\psi(-t_{13}-t_{23},t_{23}) \in \lie(t_{13}, t_{23})$.
Now, identify $\lie(t_{13}, t_{23})$ with $\dk_{2,1} = \lie(a_{11},a_{21}) \cong \edk_{2,1}$ by $t_{13} \mapsto a_{11}$ and $t_{23} \mapsto a_{21}$.
Since the coface maps on $\dk_3$ and $\edk_{2,1}$ are compatible with this identification, it follows that any $\psi = \psi(x,y) \in \lie_2$ with $d^3(\psi) = 0$ satisfies $d^{2,1}(\psi(-x-y,y)) = 0$.

We now want to introduce the emergent version of the Grothendieck-Teichm\"{u}ller Lie algebra as the space of solutions to the linearized ppss-pentagon equation.
For a technical reason, we put an additional condition coming from the following fact proved by  
Drinfeld \cite[equation (5.19)]{Drinfeld}: any $\psi \in \grt_1$ satisfies $[x, \psi(-x-y,x)] + [y, \psi(-x-y,y)] = 0$, and hence $\varphi(x,y) = \psi(-x-y,y)$ satisfies 
\begin{equation} \label{eq:vscondition}
[x, \varphi(y,x)] + [y, \varphi(x,y)] = 0.
\end{equation}

\begin{dfn}
Let 
\[
\grt_1^\EM := \{ \varphi \in \lie_2 \mid 
\text{$\varphi$ satisfies equations~\eqref{eq:p1}, \eqref{eq:ppss5-gon} and \eqref{eq:vscondition}} \}.
\]
\end{dfn}

By definition, we have an injection $\grt_1 \hookrightarrow \grt_1^\EM, \psi(x,y) \mapsto \psi(-x-y,y)$.

\begin{remark}
A computer experiment shows that up to degree $17$, the space of solutions to equations~\eqref{eq:p1} and \eqref{eq:ppss5-gon} coincides with $\grt_1$.
Furusho~\cite{Furusho} showed that the pentagon equation \eqref{eq:5-gon} implies the other two defining equations for $\grt_1$.
We do not know whether an analogous result holds true for $\grt_1^\EM$, namely whether equations~\eqref{eq:p1} and \eqref{eq:ppss5-gon} imply equation~\eqref{eq:vscondition}.
\end{remark}

\begin{remark}
There is a topological explanation for the source of condition~\eqref{eq:vscondition}. 
Consider the following hexagonal equalities: 
\[
\begin{tikzpicture}[baseline=10pt, x=4.5mm, y=4.5mm]
\draw[rs] (0.25,0) -- (0.25,0.6) -- (1,1.35) -- (1,2);
\draw[rs] (0,0) -- (0,0.7) -- (0.75,1.45) -- (0.75,2);
\draw[bs] (1,0) -- (1,0.5) -- (0.75, 0.75);
\draw[bs] (0,2) -- (0,1.5) -- (0.25,1.25); 
\end{tikzpicture}
=
\begin{tikzpicture}[baseline=48pt, x=2mm, y=3.5mm]
\draw[rs] (0,0) -- (0,6);
\draw[rs] (1,0) -- (1,0.5) -- (4,1.5) -- (4,2);
\draw[bs] (5,0) -- (5,2);
\pc{rs}{bs}{4,2}
\draw[bs] (4,4) -- (4,4.5) -- (1,5.5) -- (1,6);
\pc{rs}{bs}{0,6}
\draw[rs] (5,4) -- (5,10);
\draw[bs] (0,8) -- (0,10);
\draw[rs] (1,8) -- (1,8.5) -- (4,9.5) -- (4,10);
\draw[dotted] (-1,2) -- (6,2);
\draw[dotted] (-1,4) -- (6,4);
\draw[dotted] (-1,6) -- (6,6); 
\draw[dotted] (-1,8) -- (6,8);
\end{tikzpicture} \, ,
\hspace{4em}
\begin{tikzpicture}[baseline=15pt, x=4.5mm, y=3mm]
\draw[rs] (0.25,0) -- (0.25,0.6) -- (1,1.35) -- (1,2);
\draw[rs] (0,0) -- (0,0.7) -- (0.75,1.45) -- (0.75,2);
\draw[bs] (1,0) -- (1,0.5) -- (0.75, 0.75);
\draw[bs] (0,2) -- (0,1.5) -- (0.25,1.25); 
\draw[rs] (0.75,2) -- (0.75,2.6) -- (0.55,2.8);
\draw[rs] (0.3,3.05) -- (0,3.35) -- (0,4);
\draw[rs] (1,2) -- (1,2.7) -- (0.7,3); 
\draw[rs] (0.5,3.2) -- (0.25,3.45) -- (0.25,4);
\draw[bs] (0,2) -- (0,2.5) -- (1,3.5) -- (1,4);
\end{tikzpicture}
= 
\begin{tikzpicture}[baseline=55pt, x=2mm, y=2.5mm]
\draw[rs] (0,0) -- (0,6);
\draw[rs] (1,0) -- (1,0.5) -- (4,1.5) -- (4,2);
\draw[bs] (5,0) -- (5,2);
\pc{rs}{bs}{4,2}
\draw[bs] (4,4) -- (4,4.5) -- (1,5.5) -- (1,6);
\draw[rs] (5,4) -- (5,12);
\pc{rs}{bs}{0,6}
\pc{bs}{rs}{0,8}
\draw[rs] (0,10) -- (0,16);
\draw[bs] (1,10) -- (1,10.5) -- (4,11.5) -- (4,12);
\pc{bs}{rs}{4,12}
\draw[rs] (4,14) -- (4,14.5) -- (1,15.5) -- (1,16);
\draw[bs] (5,14) -- (5,16);
\draw[dotted] (-1,2) -- (6,2);
\draw[dotted] (-1,4) -- (6,4);
\draw[dotted] (-1,6) -- (6,6);
\draw[dotted] (-1,8) -- (6,8);
\draw[dotted] (-1,10) -- (6,10);
\draw[dotted] (-1,12) -- (6,12);
\draw[dotted] (-1,14) -- (6,14);
\end{tikzpicture}
\]
From the first equality and \eqref{eq:Z_normalized}, we obtain
\begin{equation} \label{eq:Phi1}
e^{\frac{x+y}{2}} = \Phi_{\rm pps}\, e^{\frac{y}{2}}\, \Phi_{\rm psp}^{-1}\, e^{\frac{x}{2}}\, \Phi_{\rm spp}.
\end{equation}
This shows a relationship among the elements $\Phi_{\rm pps}$, $\Phi_{\rm psp}$ and $\Phi_{\rm spp}$.
From the second equality, we obtain
\begin{equation} \label{eq:Phi2}
e^{x+y} = \Phi_{\rm pps}\, e^{\frac{y}{2}}\, \Phi_{\rm psp}^{-1}\, e^x\, \Phi_{\rm psp}\, e^{\frac{y}{2}}\, \Phi_{\rm pps}^{-1}.
\end{equation}
Now, we apply the reflection with respect to a vertical axis to the three $\alpha$'s in \eqref{eq:gen_paeb}.
Then $\alpha_{\rm spp}$ is mapped to the inverse of $\alpha_{\rm pps}$ and $\alpha_{\rm psp}$ to its inverse, where the order of red strands gets reversed.
From this observation, let us consider the following condition:
\begin{equation} \label{eq:hypothetical}
\Phi_{\rm spp}(x,y) = \Phi_{\rm pps}(y,x)^{-1}, 
\qquad
\Phi_{\rm psp}(x,y) = \Phi_{\rm psp}(y,x)^{-1}. 
\end{equation}
We do not know if any homomorphic expansion $Z^{\rm mb}$ satisfies these conditions.
Under this hypothesis, the element $\Phi_{\rm pps}$ and equation~\eqref{eq:Phi1} determine the other two elements $\Phi_{\rm psp}$ and $\Phi_{\rm spp}$.
Moreover, equations~\eqref{eq:Phi1}, \eqref{eq:Phi2} and \eqref{eq:hypothetical} imply that
\[
e^{x+y} = e^{\frac{x+y}{2}}\, \Phi_{\rm pps}(y,x)\, e^x\, \Phi_{\rm pps}(y,x)^{-1}\, e^{-\frac{x+y}{2}} \, \Phi_{\rm pps}\, e^y\, \Phi_{\rm pps}^{-1}.
\]
Taking the linearization of this equation yields condition~\eqref{eq:vscondition}.
\end{remark}

\section{Loop operations and Kashiwara-Vergne theory} \label{sec:lopKV}

In this section, we explain an interpretation of the Kashiwara-Vergne Lie algebras in terms of surface topology \cite{AKKN18, AKKN_hg}.

Fix a positive integer $n$ and let $\Sigma$ be an $n$-punctured disk, that is, a closed unit disk in $\mathbb{R}^2$ with $n$ distinct points in the interior removed.
Choose a basepoint $*$ in the boundary of $\Sigma$ and let $\pi = \pi_1(\Sigma, *)$.
Since the group $\pi$ is free of rank $n$, the associated graded quotient of the group algebra $\Q \pi$ with respect to the powers of the augmentation ideal is canonically isomorphic to the free associative algebra $\ass_n = \ass(x_1, \ldots, x_n)$, where the generator $x_i$ corresponds to the homology class of the loop around the $i$th puncture:
\[
\Sigma = 
\begin{tikzpicture}[baseline=-2.5pt, x=2.4mm, y=2.4mm]
\draw[line width=1.2pt] (0,0) circle[radius=5];
\draw (0,-4.2) node{$*$};
\fill (0,-5) circle[radius=1.5pt];
\draw (-3.5,0) node{$\circ$};
\draw (-1,0) node{$\circ$};
\draw (1.5,0) node{$\cdots$};
\draw (3.5,0) node{$\circ$};
\draw (-3.5,1.5) node{$1$};
\draw (-1,1.5) node{$2$};
\draw (1.5,1.5) node{$\cdots$};
\draw (3.5,1.5) node{$n$};
\end{tikzpicture}
\hspace{5em}
\begin{tikzpicture}[baseline=-2.5pt, x=2.4mm, y=2.4mm]
\draw[line width=1.2pt] (0,0) circle[radius=5];
\draw (0,-4.2) node{$*$};
\fill (0,-5) circle[radius=1.5pt];
\draw (-3.5,0) node{$\circ$};
\draw (-1,0) node{$\circ$};
\draw (1.5,0) node{$\cdots$};
\draw (3.5,0) node{$\circ$};
\draw[line width=0.8pt] (-3.5,0) circle[radius=0.8];
\draw[line width=0.8pt] (-1,0) circle[radius=0.8];
\draw[line width=0.8pt] (3.5,0) circle[radius=0.8];
\draw[->] (-4.275,0.17) -- (-4.275,0.17);
\draw[->] (-1.775,0.17) -- (-1.775,0.17);
\draw[->] (2.725,0.17) -- (2.725,0.17);
\draw (-3.5,-1.75) node{$x_1$} (-1,-1.75) node{$x_2$} (1.5,-1.75) node{$\cdots$} (3.5,-1.75) node{$x_n$};
\end{tikzpicture}
\]

Let $A$ be an associative $\Q$-algebra.
The trace space $|A|$ is defined to be $A/[A,A]$.
We denote by $|\ |: A \to |A|$ the natural projection.
For instance, the space $|\Q\pi|$ is naturally identified with the set of homotopy classes of free loops in $\Sigma$, and $\tr_n:= |\ass_n|$ is the space of cyclic words in $x_1, \ldots, x_n$.

\subsection{Loop operations on a punctured disk} \label{subsec:loop_ops}

We briefly recall several loop operations on $\Q \pi$ and $|\Q \pi|$.
Our focus is on their linearized version, namely the associated graded operations on $\ass_n$ and $\tr_n$.
For more details about the loop operations themselves, see \cite{MT13, Mas18, AKKN_hg}.

The main cast in the sequel are the (associated graded operations of) the homotopy intersection form \cite{MT13} and the framed Turaev cobracket \cite{AKKN18, AKKN_hg}.
The former is a $\Q$-linear map 
$
\eta : \Q \pi^{\otimes 2} \to \Q\pi
$
defined in terms of intersections of two based loops in $\Sigma$, and the latter is a $\Q$-linear map
$
\delta^f : |\Q \pi| \to |\Q \pi|^{\otimes 2}
$
defined in terms of self-intersections of a free loop in $\Sigma$.
The operation $\delta^f$ depends on the choice of a framing on $\Sigma$.
Here, we choose the blackboard framing associated with the inclusion $\Sigma \subset \mathbb{R}^2$.
We give sample computations of these operations:
\[
\begin{tikzpicture}[baseline=-2.5pt, x=2.4mm, y=2.4mm]
\draw[line width=1.2pt] (0,0) circle[radius=5];
\fill (0,-5) circle[radius=1.5pt];
\draw (-3,0) node{$\circ$};
\draw (0,0) node{$\circ$};
\draw (3,0) node{$\circ$};
\draw (0,-5) {[rounded corners=4pt] to (-3.5,-2) to[bend left=30] (-3.5,1) ..controls(-1,1)and(-2,-1).. (1,-1) to (1.5,0) to (1,1) to[bend right=40] (-2,-1) to[bend right=10] (0,-5)};
\draw[->] (-2.75,1) -- (-2.5,1);
\draw (-3,2) node{$\alpha$};
\draw (0,-5) {[rounded corners=4pt] to[bend right=2.5] (-0.5,-2) to[bend left=10] (-0.5,2) to[bend left=10] (3.5,2)} to[bend left=35] (3.5,-2) {[rounded corners=4pt] to[bend left=10] (0,-5)};
\draw[->] (1.25,2.22) -- (1.75,2.22);
\draw (1.5,3.5) node{$\beta$};
\fill (-0.7, -0.75) circle[radius=2pt];
\fill (-0.7, 0.75) circle[radius=2pt];
\end{tikzpicture}
\hspace{4em}
\eta(\alpha, \beta) =\, 
\begin{tikzpicture}[baseline=-2.5pt, x=2.4mm, y=2.4mm]
\draw[line width=1.2pt] (0,0) circle[radius=5];
\fill (0,-5) circle[radius=1.5pt];
\draw (-3,0) node{$\circ$};
\draw (0,0) node{$\circ$};
\draw (3,0) node{$\circ$};
\draw (0,-5) {[rounded corners=4pt] to (-3.5,-2) to[bend left=30] (-3.5,1) ..controls(-1.5,1)and(-1.75,-0.75).. (-0.6,-0.75) to[bend left=15] (-0.5,2) to[bend left=10] (3.5,2)} to[bend left=35] (3.5,-2) {[rounded corners=4pt] to[bend left=10] (0,-5)};
\draw[->] (1.25,2.22) -- (1.75,2.22);
\end{tikzpicture}
\hspace{0.8em} - \hspace{0.8em} 
\begin{tikzpicture}[baseline=-2.5pt, x=2.4mm, y=2.4mm]
\draw[line width=1.2pt] (0,0) circle[radius=5];
\fill (0,-5) circle[radius=1.5pt];
\draw (-3,0) node{$\circ$};
\draw (0,0) node{$\circ$};
\draw (3,0) node{$\circ$};
\draw (0,-5) {[rounded corners=4pt] to (-3.5,-2) to[bend left=30] (-3.5,1) ..controls(-1,1)and(-2,-1).. (1,-1) to (1.5,0) to (1,1) to (-0.7,0.75) to (-0.5,2) to[bend left=10] (3.5,2)} to[bend left=35] (3.5,-2) {[rounded corners=4pt] to[bend left=10] (0,-5)};
\draw[->] (1.25,2.22) -- (1.75,2.22);
\end{tikzpicture}
\]
\[
\begin{tikzpicture}[baseline=-2.5pt, x=2.4mm, y=2.4mm]
\draw[line width=1.2pt] (0,0) circle[radius=5];
\fill (0,-5) circle[radius=1.5pt];
\draw (-3,0) node{$\circ$};
\draw (0,0) node{$\circ$};
\draw (3,0) node{$\circ$};
\draw[rounded corners=4pt] (-3.5,1.5) ..controls(-1,1.5)and(-2,-1.5).. (0.5,-1.5) to[bend right=5] (3.5,-1.5) to[bend right=30] (3.5,1.5) to[bend right=5] (0.5,1.5) ..controls(-2,1.5)and(-1,-1.5).. (-3.5,-1.5);
\draw (-3.5,-1.5) to[bend left=90] (-3.5,1.5);
\draw[->] (1.75,1.6) -- (1.5,1.6);
\draw (1.75,3) node{$\gamma$};
\fill (-1.55,0) circle[radius=2pt];
\end{tikzpicture}
\hspace{4em}
\delta^f(\gamma) = \,
\begin{tikzpicture}[baseline=-2.5pt, x=2.4mm, y=2.4mm]
\draw[line width=1.2pt] (0,0) circle[radius=5];
\fill (0,-5) circle[radius=1.5pt];
\draw (-3,0) node{$\circ$};
\draw (0,0) node{$\circ$};
\draw (3,0) node{$\circ$};
\draw (-3.5,1.5) arc[x radius=1.8, y radius=1.5, start angle=90, end angle=-90];
\draw (-3.5,-1.5) to[bend left=90] (-3.5,1.5);
\draw[->] (-2.2,1) -- (-2,0.8);
\draw (-1.8,2) node{$\gamma_1$};
\end{tikzpicture}
\hspace{0.8em} \wedge \hspace{0.8em} 
\begin{tikzpicture}[baseline=-2.5pt, x=2.4mm, y=2.4mm]
\draw[line width=1.2pt] (0,0) circle[radius=5];
\fill (0,-5) circle[radius=1.5pt];
\draw (-3,0) node{$\circ$};
\draw (0,0) node{$\circ$};
\draw (3,0) node{$\circ$};
\draw[rounded corners=4pt] (-0.5,-1.5) to[bend right=5] (3.5,-1.5) to[bend right=30] (3.5,1.5) to[bend right=5] (-0.5,1.5);
\draw[->] (1.75,1.6) -- (1.5,1.6);
\draw (-0.5,-1.5) to[bend left=80] (-0.5,1.5);
\draw (1.75,3) node{$\gamma_2$};
\end{tikzpicture}
\]
Here, $\gamma_1 \wedge \gamma_2 = \gamma_1 \otimes \gamma_2 - \gamma_2 \otimes \gamma_1$.
In the first example, there are two intersections of $\alpha$ and $\beta$, and each of them contributes to a term in $\eta(\alpha, \beta)$. The sign is determined by the local index at each intersection. 
In the second example, $\gamma$ has one self-intersection.
In general, $\delta^f(\gamma)$ is obtained by splitting $\gamma$ into two free loops at each intersection.
We also use a based loop version of $\delta^f$, namely a certain map $\mu^f_r : \Q \pi \to |\Q \pi| \otimes \Q \pi$ introduced in \cite[Section~2.3]{AKKN_hg}.
We set $\mu^f:= (\varepsilon \otimes {\rm id}) \circ \mu^f_r: \Q \pi \to \Q \pi$, where the map $\varepsilon: |\Q \pi| \to \Q$ is induced from the augmentation map of $\Q \pi$.
Here is a sample computation of $\mu^f$:
\[
\begin{tikzpicture}[baseline=-2.5pt, x=2.4mm, y=2.4mm]
\draw[line width=1.2pt] (0,0) circle[radius=5];
\fill (0,-5) circle[radius=1.5pt];
\draw (-3,0) node{$\circ$};
\draw (0,0) node{$\circ$};
\draw (3,0) node{$\circ$};
\draw[->] (0,-5) ..controls(-2,-2)and(-2,1).. (0,1); 
\draw (0,-1) arc[x radius=1, y radius=1, start angle=-90, end angle=90];
\draw (0,-1) ..controls(-2,-1)and(-2,2).. (0,2);
\draw (0,2) ..controls(2,2)and(2,-1).. (3,-1); 
\draw[->] (3,-1) arc[x radius=1, y radius=1, start angle=-90, end angle=90];
\draw (3,1) ..controls(1,1)and(1.5,-5).. (0,-5);
\draw (-2,-3) node{$\gamma$}; 
\fill (-1.4,0) circle[radius=2pt];
\fill (2,0.3) circle[radius=2pt];
\end{tikzpicture}
\hspace{4em}
\mu^f(\gamma) =\, 
\begin{tikzpicture}[baseline=-2.5pt, x=2.4mm, y=2.4mm]
\draw[line width=1.2pt] (0,0) circle[radius=5];
\fill (0,-5) circle[radius=1.5pt];
\draw (-3,0) node{$\circ$};
\draw (0,0) node{$\circ$};
\draw (3,0) node{$\circ$};
\draw (0,-5) to[bend left=20] (-1.4,0);
\draw[->] (-1.4,0) ..controls(-1.75,1)and(-1,2).. (0,2);
\draw (0,2) ..controls(2,2)and(2,-1).. (3,-1); 
\draw (3,-1) arc[x radius=1, y radius=1, start angle=-90, end angle=90];
\draw (3,1) ..controls(1,1)and(1.5,-5).. (0,-5);
\end{tikzpicture}
\hspace{0.8em} - \hspace{0.8em} 
\begin{tikzpicture}[baseline=-2.5pt, x=2.4mm, y=2.4mm]
\draw[line width=1.2pt] (0,0) circle[radius=5];
\fill (0,-5) circle[radius=1.5pt];
\draw (-3,0) node{$\circ$};
\draw (0,0) node{$\circ$};
\draw (3,0) node{$\circ$};
\draw[->] (0,-5) ..controls(-2,-2)and(-2,1).. (0,1); 
\draw (0,-1) ..controls(-2,-1)and(-2,2).. (0,2);
\draw (0,2) ..controls(2,2)and(2,0.3).. (2,0.3); 
\draw (0,-1) arc[x radius=1, y radius=1, start angle=-90, end angle=90];
\draw (2,0.3) ..controls(1,-1)and(1.5,-5).. (0,-5);
\end{tikzpicture}
\]
There are two self-intersections of $\gamma$, and each of them contributes to a term in $\mu^f(\gamma)$.
The maps $\mu^f$ and $\eta$ are related by the following formula: for any $a,b \in \Q \pi$, 
\begin{equation} \label{eq:mu_product}
\mu^f(ab) = a \mu^f(b) + \mu^f(a) b + \eta(a,b).
\end{equation}
In fact, the operation $\mu^f$ recovers $\mu^f_r$ and $\delta^f$.
The map $\mu^f_r$ coincides with the following composition 
\begin{align}
\Q \pi \xrightarrow{\Delta} & \Q \pi \otimes \Q \pi \xrightarrow{{\rm id}\otimes \mu^f}
\Q \pi \otimes \Q \pi \nonumber \\
& \xrightarrow{{\rm id} \otimes ((\iota \otimes {\rm id}) \circ \Delta)} \Q \pi \otimes \Q \pi \otimes \Q \pi
\xrightarrow{(|\ |\circ {\rm mult})\otimes {\rm id}} |\Q \pi| \otimes \Q \pi. \label{eq:mufr}
\end{align}
Here, $\Delta$ and $\iota$ are the coproduct and antipode on $\Q \pi$ defined by $\Delta(\gamma) = \gamma \otimes \gamma$ and $\iota(\gamma) = \gamma^{-1}$ for $\gamma \in \pi$, and in the last step we use the multiplication map in the algebra $\Q \pi$. 
Furthermore, for any $a \in \Q\pi$ we have 
\begin{equation} \label{eq:delta_mu}
\delta^f ( |a| ) = {\rm Alt}\circ ({\rm id} \otimes |\ |) \circ \mu^f_r(a) + |a| \wedge {\bf 1}.
\end{equation}
Here, ${\rm Alt}(a \otimes b) = a \otimes b - b \otimes a$ and ${\bf 1}$ is the class of the unit in $\Q\pi$.

\begin{remark}
\begin{enumerate}
\item[(i)]
We give several comments about proofs of the formulas above.
First, one can derive formula~\eqref{eq:mu_product} by applying $(\varepsilon \otimes {\rm id})$ to the first equation in \cite[Proposition 2.9 (i)]{AKKN_hg}. A formula similar to \eqref{eq:mu_product} was proved in \cite[(3.3)]{Mas18} for a variant of the map $\mu^f$.
Second, the decomposition \eqref{eq:mufr} of the map $\mu^f_r$ follows directly from the defining formula of $\mu^f_r$. See \cite[Section~2.3, formula~(13)]{AKKN_hg}.
Finally, formula~\eqref{eq:delta_mu} can be found in \cite[Proposition~2.9 (ii)]{AKKN_hg}.

\item[(ii)]
The map $\delta^f$ is a refinement of the Turaev cobracket \cite{Turaev91}, which is a Lie cobracket on the quotient space $|\Q \pi |/\Q {\bf 1}$.
Turaev \cite{Turaev78} also introduced essentially the same operations as $\eta$ and (an unframed version of) $\mu^f$. 
\end{enumerate}
\end{remark}

\subsection{The associated graded operations} \label{subsec:ass_gr_loop_ops}

All the loop operations in the previous section descend to the associated graded operations on $\ass_n$ and $\tr_n$.
We review their explicit formulas.
For more details, see \cite[Section~3]{AKKN_hg}.
The associated graded operation of $\eta$,  
\[
\eta_{\gr} : {\ass_n}^{\otimes 2} \to \ass_n,
\]
is a map of degree $-1$ and given by $\eta_{\gr}(1, v) = \eta_{\gr}(u,1) = 0$ and 
\begin{equation} \label{eq:etagr}
\eta_{\gr}(a_1 \cdots a_l, b_1 \cdots b_m) =
-a_1 \cdots a_{l-1} \mathfrak{z}(a_l,b_1) b_2 \cdots b_m,
\end{equation}
where $l, m \ge 1$, the elements $a_1, \ldots, a_l, b_1, \ldots, b_m$ are of degree $1$, and $\mathfrak{z}$ is defined by $\mathfrak{z}(x_i, x_j) = \delta_{ij} x_i$.
The associated graded operation of $\mu^f$, 
\[
\mu^f_{\gr}: \ass_n \to \ass_n, 
\]
is a map of degree $-1$ and given by the formula
\begin{equation} \label{eq:mufgr}
\mu^f_{\gr}(a_1 \cdots a_m) = 
-\sum_{j=1}^{m-1} a_1 \cdots a_{j-1} \mathfrak{z}(a_j, a_{j+1}) a_{j+2} \cdots a_m,
\end{equation}
where $a_1,\ldots, a_m$ are elements of degree $1$.
The associated graded version of the relations \eqref{eq:mu_product}, \eqref{eq:mufr} and \eqref{eq:delta_mu} holds true.
First, for any $a,b \in \ass_n$
\begin{equation} \label{eq:mu_product_gr}
\mu^f_{\gr}(ab) = a \mu^f_{\gr}(b) + \mu^f_{\gr}(a) b + \eta_{\gr}(a,b).
\end{equation}
Of course, one can directly check this from formulas \eqref{eq:etagr} and \eqref{eq:mufgr}.
Second, the associated graded operation $\mu^f_{r, \gr}$ decomposes as 
\begin{align}
\ass_n \xrightarrow{\Delta} & \ass_n \otimes \ass_n \xrightarrow{{\rm id}\otimes \mu^f_\gr}
\ass_n \otimes \ass_n \nonumber \\
& \xrightarrow{{\rm id} \otimes ((\iota \otimes {\rm id}) \circ \Delta)} \ass_n \otimes \ass_n \otimes \ass_n
\xrightarrow{(|\ |\circ {\rm mult})\otimes {\rm id}} \tr_n \otimes \ass_n. \label{eq:mufr_gr}
\end{align}
Conversely, we have $\mu^f_\gr = (\varepsilon \otimes {\rm id}) \circ \mu^f_{r, \gr}$.
Finally, for any $a \in \ass_n$ 
\begin{equation} \label{eq:delta_mu_gr}
\delta^f_\gr(|a|) = {\rm Alt} \circ ({\rm id} \otimes |\ |) \circ \mu^f_{r, \gr}(a).
\end{equation} 
Note that the term $|a| \wedge {\bf 1}$ in \eqref{eq:delta_mu} does not contribute to the associated graded operation, since it has filtration degree zero. 

\begin{lem} \label{lem:etauv}
For any $a, b \in \lie_n$, we have
$\eta_{\gr}(a,b) = -\sum_{i=1}^n (\pa_i a) x_i \iota(\pa_i b)$.
\end{lem}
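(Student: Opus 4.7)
The plan is to reduce the computation of $\eta_{\gr}(a,b)$ on Lie elements to the defining monomial formula \eqref{eq:etagr} by expanding both arguments using the (non-commutative) partial derivatives. First I would write
\[
a = \sum_{i=1}^n (\pa_i a)\, x_i, \qquad b = \sum_{j=1}^n x_j\, (\pa^j b),
\]
where $\pa_i$ and $\pa^j$ are as in the Notation section. The identity $\pa^j b = \iota(\pa_j b)$ for $b \in \lie_n$ follows from $\iota|_{\lie_n} = -\mathrm{id}$: applying $\iota$ to $b = \sum_i (\pa_i b) x_i$ and using $\iota(x_i) = -x_i$ gives $-b = -\sum_i x_i\, \iota(\pa_i b)$, hence $\pa^j b = \iota(\pa_j b)$.

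Next, by the bilinearity of $\eta_{\gr}$ together with the definition \eqref{eq:etagr}, for any $P,Q \in \ass_n$ one has
\[
\eta_{\gr}(P x_i,\ x_j Q) = -P\, \mathfrak{z}(x_i, x_j)\, Q = -\delta_{ij}\, P x_i Q,
\]
where the boundary cases $P = 1$ or $Q = 1$ are still handled correctly since $\mathfrak{z}(x_i, x_j) = \delta_{ij} x_i$ has length one. Substituting the expansions of $a$ and $b$ gives
\[
\eta_{\gr}(a,b) = \sum_{i,j} \eta_{\gr}\bigl((\pa_i a) x_i,\ x_j\, \iota(\pa_j b)\bigr) = -\sum_{i,j} \delta_{ij}\, (\pa_i a)\, x_i\, \iota(\pa_j b) = -\sum_{i=1}^n (\pa_i a)\, x_i\, \iota(\pa_i b),
\]
which is the desired formula.

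The argument is essentially a direct calculation once both $a$ and $b$ are placed in the correct one-sided normal form (one ending in a generator, the other starting with one), so there is no substantive obstacle; the only point requiring minor care is the case where $\pa_i a$ or $\pa_j b$ is a scalar constant (for $a$ or $b$ of degree one), which is covered by the length-one case of \eqref{eq:etagr}.
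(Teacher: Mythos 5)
Your proposal is correct and follows essentially the same route as the paper, which simply cites the expansions $a = \sum_{i=1}^n (\pa_i a) x_i$ and $b = \sum_{i=1}^n x_i\, \iota(\pa_i b)$ and leaves the rest implicit. You have merely spelled out the bilinearity step and the boundary cases that the paper omits, so there is nothing to add.
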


\begin{proof}
This follows from $a = \sum_{i=1}^n (\pa_i a) x_i$ and 
$b = \sum_{i=1}^n x_i \iota(\pa_i b)$.
\end{proof}

We show that the map $\mu^f_{\rm gr}$ is related to the map $R$ introduced in Section~\ref{subsec:dooedk}.
This will be a key point for proving Theorem~\ref{thm:main}.

\begin{prop} \label{prop:Rmugr}
The map $R$ coincides with the restriction of $\mu^f_{\rm gr}$ to $\lie_n$.
\end{prop}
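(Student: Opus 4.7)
The plan is to show that $\mu^f_\gr|_{\lie_n}$ satisfies the same recursive characterization as $R$, so the two must coincide by the uniqueness clause in the definition of $R$.

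First I would verify the base case: for a generator $x_i$ the formula \eqref{eq:mufgr} gives an empty sum, so $\mu^f_\gr(x_i) = 0 = R(x_i)$.

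Next I would show that $\mu^f_\gr|_{\lie_n}$ satisfies the same recursion as $R$ on brackets. Given $a,b \in \lie_n$, writing $[a,b] = ab - ba$ and applying the Leibniz-type relation \eqref{eq:mu_product_gr} twice gives
\[
\mu^f_\gr([a,b]) = [\mu^f_\gr(a), b] + [a, \mu^f_\gr(b)] + \eta_\gr(a,b) - \eta_\gr(b,a).
\]
By Lemma~\ref{lem:etauv}, the antisymmetrized intersection term equals
\[
\eta_\gr(a,b) - \eta_\gr(b,a) = \sum_{i=1}^n \bigl( (\pa_i b)\, x_i\, \iota(\pa_i a) - (\pa_i a)\, x_i\, \iota(\pa_i b) \bigr),
\]
which is precisely the inhomogeneous term appearing on the right-hand side of \eqref{eq:Rrecursive}. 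Hence $\mu^f_\gr|_{\lie_n}$ satisfies the same defining recursion as $R$.

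Since $\lie_n$ is generated as a Lie algebra by $x_1, \ldots, x_n$, an easy induction on Lie-bracket depth (or equivalently on degree) using the base case and the recursion shows that any two $\Q$-linear maps $\lie_n \to \ass_n$ agreeing on generators and obeying \eqref{eq:Rrecursive} must coincide. Therefore $R = \mu^f_\gr|_{\lie_n}$. There is no real obstacle here beyond correctly matching the signs between $\eta_\gr(a,b) - \eta_\gr(b,a)$ and the correction term in \eqref{eq:Rrecursive}; once that bookkeeping is done the proposition follows immediately.
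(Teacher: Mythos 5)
Your proposal is correct and follows essentially the same route as the paper's own proof: both verify $\mu^f_\gr(x_i)=0$, antisymmetrize the Leibniz relation \eqref{eq:mu_product_gr}, and invoke Lemma~\ref{lem:etauv} to identify $\eta_\gr(a,b)-\eta_\gr(b,a)$ with the correction term in \eqref{eq:Rrecursive}. The sign bookkeeping in your display checks out, and your explicit appeal to uniqueness of the map satisfying the recursion is just a slightly more spelled-out version of the paper's closing sentence.
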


\begin{proof}
We have $\mu^f_\gr(x_i) = 0$ for $i=1,\ldots,n$.
Let $a, b \in \lie_n$. 
Equation~\eqref{eq:mu_product_gr} and Lemma~\ref{lem:etauv} shows that $\mu^f_\gr([a,b]) = \mu^f_\gr(ab) - \mu^f_\gr(ba)$ is equal to 
\[
[\mu^f_{\gr}(a), b] + [a, \mu^f_\gr(b)] 
+ \sum_{i=1}^n \left( (\pa_i b) x_i \iota(\pa_i a) - (\pa_i a) x_i \iota( \pa_i b) \right).
\]
Therefore, the map $\mu^f_\gr$ restricted to $\lie_n$ satisfies the same recursive formula as the map $R$.
This proves the proposition.
\end{proof}

\begin{remark}
In \cite[\S 4.3]{Mas18}, Massuyeau gave a $3$-dimensional formula for $\mu^f$ which involves the cabling operation for pure braids on a punctured disk.
It would be interesting to compare his formula with Proposition~\ref{prop:Rmugr}.
\end{remark}

\subsection{Kashiwara-Vergne Lie algebras} \label{subsec:KVliealg}

We recall the definition of the Kashiwara-Vergne Lie algebras \cite{AT12, AKKN18}.

We begin with some preliminary materials.
Let $\tder_n = {\lie_n}^{\oplus n}$.
The grading on $\lie_n$ makes $\tder_n$ a graded $\Q$-vector space.
For $\tilde{u} = (u_1, \ldots, u_n) \in \tder_n$, let $\rho(\tilde{u})$ be a derivation on $\lie_n$ defined by $\rho(\tilde{u})(x_i) = [x_i, u_i]$ for $i = 1, \ldots, n$.
The space $\tder_n$ has a structure of graded Lie algebra whose Lie bracket is given by $[\tilde{u}, \tilde{v}] = \tilde{w} = (w_1,\ldots, w_n)$ with $w_i = [u_i, v_i] + \rho(\tilde{u})(v_i) - \rho(\tilde{v})(u_i)$ for $i = 1, \ldots, n$, and the map $\tilde{u} \mapsto \rho(\tilde{u})$ is a Lie algebra homomorphism to the derivation Lie algebra of $\lie_n$.
Through this homomorphism, $\tder_n$ acts on $\lie_n$, $\ass_n$, $\tr_n$ and their tensor products.
Elements of $\tder_n$ are called {\it tangential derivations}. 
The space $\sder_n$ of {\it special derivations} is defined to be the set of $\tilde{u} \in \tder_n$ annihilating the element $x_0 = \sum_{i=1}^n x_i$, i.e., $\rho(\tilde{u})(x_0) = 0$.
It forms a Lie subalgebra of $\tder_n$.
The {\it divergence cocycle} \cite{AT12} is a Lie $1$-cocycle defined by the following formula:
\[
{\rm div}: \tder_n \to \tr_n, 
\quad \tilde{u} \mapsto \sum_{i=1}^n | x_i (\pa_i u_i)|.
\]

\begin{dfn}
\begin{enumerate}
\item[(i)]
The {\it Kashiwara-Vergne Lie algebra} $\krv_n$ is the space consisting of $\tilde{u} \in \sder_n$ such that 
$
{\rm div}(\tilde{u}) = 
\sum_{i=0}^n |f_i(x_i) |
$
for some formal power series $f_0(s), f_1(s), \ldots, f_n(s) \in \Q[[s]]$.
\item[(ii)]
Let $\krv_n^0$ be the space of $\tilde{u} \in \sder_n$ such that ${\rm div}(\tilde{u}) \in \bigoplus_{i=1}^n \Q |x_i|$.
\end{enumerate}
\end{dfn}

In the definition of $\krv_n$, the functions $f_i(s)$ actually agree with each other modulo the linear part \cite[Proposition~8.5]{AKKN18}.
In particular, if $n = 2$ and $\tilde{u} \in \krv_2$ is of degree $\ge 3$, then there exists an $f(s) \in \Q[[s]]_{\ge 2}$ such that
\begin{equation} \label{eq:krv_2}
{\rm div}(\tilde{u}) = | f(x_1) + f(x_2) - f(x_1 + x_2) |.
\end{equation}

We have the following sequence of inclusions of graded Lie algebras:
\[
\tder_n \supset \sder_n \supset \krv_n \supset \krv_n^0.
\]
The Lie algebras $\sder_n$, $\krv_n$ and $\krv_n^0$ have the following characterizations in terms of (the associated graded of) the loop operations.

\begin{prop} \label{prop:eta_sder}
Let $\tilde{u} = (u_1, \ldots, u_n) \in {\sf tder}_n$.
Then, the following three conditions are equivalent:
\begin{enumerate}
\item[{\rm (i)}] $\tilde{u} \in \sder_n$;
\item[{\rm (ii)}] $\pa_j u_i = \pa^i u_j$ for any $i,j \in \{ 1, \ldots, n\}$;
\item[{\rm (iii)}] $\rho(\tilde{u})$ commutes with $\eta_\gr$, i.e., 
$\rho(\tilde{u}) \circ \eta_{\rm gr}= \eta_{\rm gr} \circ (\rho(\tilde{u}) \otimes {\rm id} + {\rm id} \otimes \rho(\tilde{u}))$.
\end{enumerate}
\end{prop}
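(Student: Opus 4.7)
The plan is to treat the equivalences (i)$\Leftrightarrow$(ii) and (ii)$\Leftrightarrow$(iii) separately, reducing each to a computation on generators together with the uniqueness of the canonical decompositions $\ass_n = \Q \cdot 1 \oplus \bigoplus_i x_i \ass_n = \Q \cdot 1 \oplus \bigoplus_j \ass_n x_j$.

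For (i)$\Leftrightarrow$(ii), I compute $\rho(\tilde u)(x_0) = \sum_i [x_i, u_i]$ and expand each $u_i$ in its two canonical forms $u_i = \sum_j (\pa_j u_i) x_j = \sum_j x_j (\pa^j u_i)$; after relabeling indices in the second sum this gives
\[
\rho(\tilde u)(x_0) = \sum_{i,j} x_i \,(\pa_j u_i - \pa^i u_j)\, x_j.
\]
By the uniqueness of the two canonical decompositions of $\ass_n$, this double sum vanishes if and only if $\pa_j u_i - \pa^i u_j = 0$ for every pair $(i, j)$.

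For (ii)$\Leftrightarrow$(iii), I rewrite the defining formula \eqref{eq:etagr} in the compact partial-derivative form $\eta_\gr(a, b) = -\sum_i (\pa_i a)\, x_i\, (\pa^i b)$, valid on the augmentation ideal (this is just the monomial formula read off by matching the last letter of $a$ with the first letter of $b$). Set
\[
D(a, b) := \rho(\tilde u)(\eta_\gr(a, b)) - \eta_\gr(\rho(\tilde u)(a), b) - \eta_\gr(a, \rho(\tilde u)(b)).
\]
Using $\rho(\tilde u)(x_k) = [x_k, u_k]$ together with the right/left Leibniz rules $\pa_j(\alpha\beta) = \alpha\, \pa_j\beta + \varepsilon(\beta)\, \pa_j\alpha$ and $\pa^i(\alpha\beta) = (\pa^i\alpha)\, \beta + \varepsilon(\alpha)\, \pa^i\beta$ (both immediate from the definitions of $\pa_j$ and $\pa^i$), a direct calculation on the generator pair $(x_i, x_j)$ yields $D(x_i, x_j) = x_i\,(\pa_j u_i - \pa^i u_j)\, x_j$, so by the same uniqueness argument as in the first stage, $D$ vanishes on every generator pair if and only if (ii) holds.

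To upgrade vanishing on generators to vanishing on all of $\ass_n^{\otimes 2}$, I invoke the factorization identities $\eta_\gr(\alpha\beta, c) = \alpha\, \eta_\gr(\beta, c)$ and $\eta_\gr(a, \beta\gamma) = \eta_\gr(a, \beta)\, \gamma$, valid whenever the inner factor $\beta$ has positive degree; both are read off the partial-derivative formula. Combined with the fact that $\rho(\tilde u)$ is a derivation of $\ass_n$, a short Leibniz manipulation produces $D(\alpha\beta, c) = \alpha\, D(\beta, c)$ and $D(a, \beta\gamma) = D(a, \beta)\, \gamma$ under the same positive-degree hypothesis, and induction on total degree then reduces (iii) to the already-treated base case (the unit is handled by $\eta_\gr(1, \cdot) = \eta_\gr(\cdot, 1) = 0$). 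The one genuinely delicate step is the base-case identity itself: four cross-terms arise from expanding $\eta_\gr([x_i, u_i], x_j)$ and $\eta_\gr(x_i, [x_j, u_j])$, and one must observe that their $\delta_{ij}$-diagonal contributions cancel $\rho(\tilde u)(\eta_\gr(x_i, x_j)) = -\delta_{ij}[x_i, u_i]$ exactly, leaving the clean off-diagonal expression $x_i\,(\pa_j u_i - \pa^i u_j)\, x_j$.
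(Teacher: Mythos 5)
Your proposal is correct and follows essentially the same route as the paper: the identical expansion $\rho(\tilde u)(x_0)=\sum_{i,j}x_i(\pa_j u_i-\pa^i u_j)x_j$ for (i)$\Leftrightarrow$(ii), and for (ii)$\Leftrightarrow$(iii) the same reduction to generators via the Fox-pairing/factorization property of $\eta_\gr$ followed by the same computation of the defect $x_i(\pa_j u_i-\pa^i u_j)x_j$ on the pair $(x_i,x_j)$. The only cosmetic difference is that you derive the factorization identities from the compact formula $\eta_\gr(a,b)=-\sum_i(\pa_i a)x_i(\pa^i b)$ rather than quoting the Fox-pairing axioms directly.
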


\begin{proof}
The following computation proves the equivalence ${\rm (i)} \Leftrightarrow {\rm (ii)}$:
\begin{align*}
\rho(\tilde{u})(x_0) &= 
\sum_{i=1}^n [x_i, u_i] = 
\sum_{i=1}^n x_i u_i - \sum_{j=1}^n u_j x_j 
= \sum_{i,j = 1}^n \big( x_i (\pa_j u_i) x_j 
- x_i (\pa^i u_j) x_j \big).
\end{align*}

To prove the equivalence ${\rm (ii)} \Leftrightarrow {\rm (iii)}$, note that the map $\eta_\gr$ is a Fox pairing \cite{MT13}.
This means that $\eta_\gr$ satisfies
\[
\begin{cases}
\eta_\gr(ab, c) = a \eta_\gr(b,c) + \varepsilon(b) \eta_\gr(a,c), \\ 
\eta_\gr(a, bc) = \eta_\gr(a,b) c + \varepsilon(b) \eta_\gr(a,c)
\end{cases}
\]
for any $a,b,c \in \ass_n$. 
Thanks to this property, the condition (iii) is equivalent to the commutativity of $\rho(\tilde{u})$ and $\eta_\gr$ on generators of $\ass_n$, namely 
\begin{enumerate}
\item[${\rm (iii)}'$]
$u (\eta_\gr(x_i, x_j)) = \eta_\gr(u(x_i), x_j) + \eta_\gr(x_i, u(x_j))
\quad \text{for any $i,j \in \{ 1, \ldots, n\}$}$.
\end{enumerate}
Now we compute 
$
u (\eta_\gr(x_i, x_j) ) = 
u(\mathfrak{z}(x_i, x_j)) = \delta_{ij} u(x_i) = \delta_{ij} [x_i, u_i]
$ and 
\begin{align*}
\eta_\gr(u(x_i), x_j) + \eta_\gr(x_i, u(x_j))
&= \eta_\gr([x_i, u_i], x_j) + \eta_\gr(x_i, [x_j, u_j]) \\ 
&= x_i (\pa_j u_i) x_j - u_i \mathfrak{z}(x_i, x_j) \\
& \qquad + \mathfrak{z}(x_i, x_j) u_j - x_i (\pa^i u_j) x_j \\
&= x_i (\pa_j u_i - \pa^i u_j) x_j + \delta_{ij} [x_i, u_i].
\end{align*}
Hence the condition ${\rm (iii)}'$ is equivalent to (ii).
This completes the proof.
\end{proof}

\begin{remark}
The equivalence ${\rm (i)} \Leftrightarrow {\rm (iii)}$ in Proposition~\ref{prop:eta_sder} is a special case of (the infinitesimal version of) more general results \cite[Lemmas 6.2 and 6.3]{MT13}, \cite[Theorem 2.31]{Naef20}.
\end{remark}

\begin{thm} \label{thm:sderkrv}
Let $\tilde{u} = (u_1, \ldots, u_n) \in {\sf tder}_n$.
\begin{enumerate}
\item[{\rm (i)}]
$\tilde{u} \in {\sf krv}_n \iff
\text{$\rho(\tilde{u})$ commutes with $\eta_{\rm gr}$ and $\delta^f_{\rm gr}$}$.

\item[{\rm (ii)}]
$\tilde{u} \in {\sf krv}_n^0 \iff 
\text{$\rho(\tilde{u})$ commutes with $\eta_{\rm gr}$ and $\mu^f_{\rm gr}$}$.
\end{enumerate}
\end{thm}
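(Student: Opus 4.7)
The plan is to reduce both statements, via Proposition~\ref{prop:eta_sder}, to an analysis of the defects measuring the failure of $\rho(\tilde u)$ to commute with $\mu^f_\gr$ and $\delta^f_\gr$. Since both $\krv_n$ and $\krv_n^0$ are contained in $\sder_n$, and Proposition~\ref{prop:eta_sder} already identifies $\sder_n$ as the locus where $\rho(\tilde u)$ commutes with $\eta_\gr$, I may and shall assume throughout that $\tilde u \in \sder_n$. Writing
\[
C^\mu_{\tilde u} := \rho(\tilde u) \circ \mu^f_\gr - \mu^f_\gr \circ \rho(\tilde u), \qquad C^\delta_{\tilde u} := \mathrm{Lie}_{\rho(\tilde u)} \circ \delta^f_\gr - \delta^f_\gr \circ \rho(\tilde u),
\]
where $\mathrm{Lie}_{\rho(\tilde u)} = \rho(\tilde u) \otimes \mathrm{id} + \mathrm{id} \otimes \rho(\tilde u)$ acts on $\tr_n \otimes \tr_n$, the theorem reduces to the two assertions: (ii) $C^\mu_{\tilde u}=0$ iff $\mathrm{div}(\tilde u) \in \bigoplus_i \Q|x_i|$; and (i) $C^\delta_{\tilde u}=0$ iff $\mathrm{div}(\tilde u) = \sum_i |f_i(x_i)|$ for some $f_i \in \Q[[s]]$.

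For (ii), the Leibniz-type identity~\eqref{eq:mu_product_gr} combined with the commutation of $\rho(\tilde u)$ and $\eta_\gr$ (Proposition~\ref{prop:eta_sder}(iii)) implies, by direct expansion, that the cross-defect in $C^\mu_{\tilde u}(ab) - aC^\mu_{\tilde u}(b) - C^\mu_{\tilde u}(a) b$ vanishes, so that $C^\mu_{\tilde u}$ is itself a derivation of $\ass_n$. Hence $C^\mu_{\tilde u}=0$ iff it vanishes on the generators $x_i$. Using $\mu^f_\gr(x_i)=0$, equation~\eqref{eq:mu_product_gr}, Lemma~\ref{lem:etauv}, Proposition~\ref{prop:Rmugr}, and the $\sder_n$-identity $\pa_i u_i = \pa^i u_i$, a short computation gives
\[
C^\mu_{\tilde u}(x_i) = -\mu^f_\gr([x_i, u_i]) = -\big[x_i,\, R(u_i) - \pa_i u_i\big].
\]
Thus $C^\mu_{\tilde u}=0$ iff $R(u_i) - \pa_i u_i$ commutes with $x_i$ in $\widehat{\ass}_n$, i.e.\ lies in $\Q[[x_i]]$, for every $i$. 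A degree-by-degree matching of the image of this condition in $\tr_n$ with $\mathrm{div}(\tilde u) = \sum_i |x_i\,\pa_i u_i|$, using once more the Jacobian symmetry $\pa^i u_j = \pa_j u_i$ to control the off-diagonal contributions, identifies it with $\mathrm{div}(\tilde u) \in \bigoplus_i \Q|x_i|$, completing (ii).

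For (i), I combine the decomposition~\eqref{eq:mufr_gr} of $\mu^f_{r,\gr}$ through $\mu^f_\gr$ with formula~\eqref{eq:delta_mu_gr} expressing $\delta^f_\gr$ through $\mu^f_{r,\gr}$. Since the coproduct, antipode, multiplication, and trace projection on $\ass_n$ all intertwine with any algebra derivation, these two formulas realise $C^\delta_{\tilde u}$ as the image of $C^\mu_{\tilde u}$ under the composite map $\ass_n \to \tr_n \wedge \tr_n$ built from \eqref{eq:mufr_gr} and \eqref{eq:delta_mu_gr}. Vanishing of $C^\delta_{\tilde u}$ is therefore a strictly weaker condition than vanishing of $C^\mu_{\tilde u}$; the extra freedom is expected to match precisely the higher-order power-series-in-one-variable contributions $|f_i(x_i)|$ with $f_i \in \Q[[s]]_{\ge 2}$ allowed by the $\krv_n$ condition but not by the $\krv_n^0$ condition. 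The main anticipated obstacle, and the technical heart of (i), is the combinatorial verification that the antipode together with the alternation $\mathrm{Alt}$ appearing in~\eqref{eq:delta_mu_gr} collapse exactly the diagonal commutators $[x_i, g_i(x_i)]$ with $g_i \in \Q[[s]]_{\ge 2}$, while any genuinely mixed contribution to $R(u_i) - \pa_i u_i$ survives in $\tr_n \wedge \tr_n$. Once this kernel description is secured, (i) follows from (ii) by replacing the centraliser condition "$R(u_i) - \pa_i u_i \in \Q[[x_i]]$" of degree one with its power-series enhancement, and feeding the result through the same trace computation.
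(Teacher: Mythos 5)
Your reduction to $\tilde u\in\sder_n$ via Proposition~\ref{prop:eta_sder}, the verification that $C^{\mu}_{\tilde u}$ is a derivation of $\ass_n$ (this is exactly Lemma~\ref{lem:mu_rho}), and the computation $C^{\mu}_{\tilde u}(x_i)=-[x_i,R(u_i)-\pa_i u_i]$ using $\pa_i u_i=\pa^i u_i$ are all correct. But the proof does not close, because the two steps that carry the actual content of the theorem are asserted rather than proved.

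In (ii), the passage from ``$R(u_i)-\pa_i u_i$ centralizes $x_i$ for every $i$'' to ``${\rm div}(\tilde u)=\sum_i|x_i\,\pa_i u_i|\in\bigoplus_i\Q|x_i|$'' is not a degree-by-degree matching controlled by the Jacobian symmetry. The honest bridge is the nontrivial identity $d_{\tilde u}=\sigma_{\gr}({\rm div}(\tilde u))$ (Remark~\ref{rem:d_u}, the linearization of \cite[eq.\,(84)]{AKKN_hg}), after which one still needs (a) the determination of which cyclic words $w$ have $\sigma_{\gr}(w)_i$ centralizing $x_i$ for all $i$ (namely $\Q 1+\sum_{i\ge1}|\Q[[x_i]]|$, a relative of fact (v) of Remark~\ref{rem:review_AKKN}), and (b) the fact that for a \emph{special} derivation the one-variable power series occurring in ${\rm div}(\tilde u)$ agree modulo their linear parts (\cite[Proposition~8.5]{AKKN18}), which is what collapses $\sum_i|f_i(x_i)|$ with $f_0=0$ down to $\bigoplus_i\Q|x_i|$. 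Without (b) your centralizer condition would a priori also be satisfied by a hypothetical $\tilde u$ with ${\rm div}(\tilde u)=|x_1^5|$, which is not in $\krv_n^0$. None of these inputs appears in your argument. In (i), you explicitly defer the key step: the identification of the kernel of the map carrying $C^{\mu}_{\tilde u}$ to $C^{\delta}_{\tilde u}$ is labelled an ``anticipated obstacle'' to be ``secured'', so this part is a plan, not a proof; note also that the easy inclusion (commuting with $\mu^f_{\gr}$ plus the diagonal freedom implies commuting with $\delta^f_{\gr}$) is essentially Proposition~\ref{prop:u_delta}, while the converse is the hard direction. The paper avoids all of this by quoting \cite[Theorem~8.21]{AKKN_hg} for the group version and passing to Lie algebras; a self-contained proof of (i) along lines close to yours is given in Remark~\ref{rem:review_AKKN}, but it runs through $\delta^f_{\gr}={\rm Div}\circ\sigma_{\gr}$, the $1$-cocycle property of ${\rm Div}$, the injectivity of $\widetilde{\Delta}$, and the computation of the center of $\tr_n$ --- precisely the machinery your proposal leaves out.
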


\begin{proof}
First note that in (ii) one can replace $\mu^f_\gr$ with $\mu^f_{r, \gr}$, since $\mu^f_{r, \gr}$ is recovered from $\mu^f_\gr$ and vice versa.

In \cite[Theorem~8.21]{AKKN_hg}, it was shown that the Kashiwara-Vergne groups ${\rm KRV}_n$ (resp. ${\rm KRV}_n^0$) is isomorphic to the group of tangential automorphisms of (the completion of) $\ass_n$ that commute with the operations $\eta_\gr$ and $\delta^f_\gr$ (resp. $\eta_\gr$ and $\mu^f_{r, {\rm gr}}$).
As $\krv_n$ and $\krv_n^0$ are the Lie algebras of ${\rm KRV}_n$ and ${\rm KRV}_n^0$, respectively, the assertions (i) and (ii) follow from this result. 
\end{proof}

\begin{remark} \label{rem:review_AKKN}
As the proof shows, Theorem~\ref{thm:sderkrv} is essentially the linearized version of \cite[Theorem~8.21]{AKKN_hg}.
The key ingredients of the proof of the latter are the following facts (see \cite{AKKN18, AKKN_hg}):
\begin{enumerate}
\item[(i)] The space $\tr_n$ has a Lie algebra structure.
It arises as the associated graded operation of the Goldman bracket \cite{Go86} defined on $|\Q \pi|$.

\item[(ii)] There is a surjective Lie algebra homomorphism $\sigma_{\rm gr}: \tr_n \to {\sf sDer}_n$, where ${\sf sDer}_n$ is the free associative version of $\sder_n$, namely the space of $\tilde{u} = (u_1,\ldots,u_n) \in {\ass_n}^{\oplus n}$ such that $\rho(\tilde{u})(x_0) = \sum_{i=1}^n [x_i, u_i] = 0$.
One can naturally regard $\sder_n$ as a Lie subalgebra of ${\sf sDer}_n$.
It holds that $\rho(\sigma_{\rm gr}(a))(b) = [a,b]$ for any $a, b \in \tr_n$.

\item[(iii)]
There is a free associative version of the divergence cocycle, ${\rm Div}: {\sf sDer}_n \to {\tr_n}^{\otimes 2}$.
It satisfies ${\rm Div}(\tilde{u}) = \widetilde{\Delta} ({\rm div}(\tilde{u}))$ for any $\tilde{u} \in \sder_n$.
Here, $\widetilde{\Delta}: \tr_n \to {\tr_n}^{\otimes 2}$ is the injective map that is induced from $({\rm id} \otimes \iota) \circ \Delta: \ass_n \to {\ass_n}^{\otimes 2}$.

\item[(iv)]
The linearized Turaev cobracket and the divergence cocycle are related by the following formula:
\[
\delta^f_{\rm gr} = {\rm Div} \circ \sigma_{\rm gr}.
\]

\item[(v)]
The center of the Lie algebra $\tr_n$ is spanned by $|\Q [[x_i]]|$, $0 \le i \le n$ (see \cite[Theorem 4.15]{AKKN18} and \cite[Theorem 3.18]{AKKN_hg}).
\end{enumerate}
For the convenience of the reader, let us illustrate how these facts lead directly to the first statement of Theorem~\ref{thm:sderkrv}.
We need more materials from \cite{AKKN18, AKKN_hg} for the second statement, and so we omit it.

\begin{proof}[A direct proof of Theorem \ref{thm:sderkrv}~$(i)$]
By Proposition~\ref{prop:eta_sder}, it is sufficient to prove that for any $\tilde{u} \in \sder_n$ the divergence condition for $\tilde{u}$ is equivalent to the commutativity of $\rho(\tilde{u})$ with $\delta^f_{\rm gr}$.
Since $\sigma_{\rm gr}$ is surjective, one can write $\tilde{u} = \sigma_{\gr}(b)$ for some $b\in \tr_n$.
Let $a \in \tr_n$. 
Using the facts (ii) and (iv) we compute 
\begin{align*}
\delta^f_{\gr}(\rho(\tilde{u})(a)) - \rho(\tilde{u})(\delta^f_{\gr}(a)) &= {\rm Div} ( \sigma_{\gr}(\rho(\tilde{u})(a))) - \sigma_{\gr}(b)({\rm Div}(\sigma_{\gr}(a)) \\
&= {\rm Div}(\sigma_{\rm gr}([b,a])) - \sigma_{\gr}(b)({\rm Div }(\sigma_{\gr}(a))) \\ 
&= - \sigma_{\gr}(a)({\rm Div}(\sigma_{\gr}(b))).
\end{align*}
In the last line, we have used the $1$-cocycle property of ${\rm Div}$.
By the fact (iii), it follows that $\sigma_{\gr}(a)({\rm Div}(\sigma_{\gr}(b))) = 0$ if and only if $[a, {\rm div}(\sigma_{\gr}(b))] = 0$.
Hence, $\rho(\tilde{u})$ commutes with $\delta^f_{\gr}$ if and only if ${\rm div}(\tilde{u})$ lies in the center of the Lie algebra $\tr_n$.
We can conclude by the fact (v).
\end{proof}
\end{remark}

\section{Proof of the main result} \label{sec:pfmain}

In this section, we prove Theorem~\ref{thm:main} in the introduction.
When $n=2$, we use letters $x,y$ for generators of $\lie_2$ instead of $x_1, x_2$.

\subsection{KV equations from emergent associator equations} \label{subsec:KV_eaq}

We prove the first statement of Theorem~\ref{thm:main}.

We simply write $u = \rho(\tilde{u})$ for $\tilde{u} \in \tder_n$.
The action of $\tilde{u}$ on tensor products of $\ass_n$ and $\tr_n$ is abbreviated in a similar way.
For instance, $\tilde{u}$ acts on $\tr_n \otimes \ass_n$ as $ \rho(\tilde{u}) \otimes {\rm id} + {\rm id} \otimes \rho(\tilde{u})$, and we denote it by $u$.

\begin{lem} \label{lem:mu_rho}
Let $\tilde{u} \in {\sf sder}_n$. Then, 
$d_{\tilde{u}} := \mu^f_{\rm gr} \circ u - u \circ \mu^f_{\rm gr}$ is a derivation on ${\sf ass}_n$.
Furthermore, the map $\mathcal{D}_{\tilde{u}}: = \mu^f_{r,\gr} \circ u - u \circ \mu^f_{r, \gr}$ from $\ass_n$ to $\tr_n \otimes \ass_n$ satisfies the following property: for any $a, b \in \ass_n$, 
\[
\mathcal{D}_{\tilde{u}}(ab) = \mathcal{D}_{\tilde{u}}(a) (1\otimes b) + (1 \otimes a) \mathcal{D}_{\tilde{u}}(b).
\]
\end{lem}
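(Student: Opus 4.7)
For $d_{\tilde u} = \mu^f_\gr \circ u - u \circ \mu^f_\gr$ with $u = \rho(\tilde u)$, expanding $d_{\tilde u}(ab)$ via~\eqref{eq:mu_product_gr} and the Leibniz rule for $u$ yields
\[
d_{\tilde u}(ab) - d_{\tilde u}(a)\,b - a\,d_{\tilde u}(b) = \eta_\gr(u(a), b) + \eta_\gr(a, u(b)) - u(\eta_\gr(a,b)),
\]
and the right-hand side vanishes since $\tilde u \in \sder_n$ implies $u$ commutes with $\eta_\gr$ by Proposition~\ref{prop:eta_sder}. This settles the first claim.

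For the second claim, my plan is to realize $\mathcal{D}_{\tilde u}$ as the image of $d_{\tilde u}$ under a natural construction sending derivations of $\ass_n$ to maps $\ass_n \to \tr_n \otimes \ass_n$. For any derivation $D$ of $\ass_n$, set
\[
\tilde D := \bigl((|\,\cdot\,| \circ \mathrm{mult}) \otimes \mathrm{id}\bigr) \circ \bigl(\mathrm{id} \otimes ((\iota \otimes \mathrm{id}) \circ \Delta)\bigr) \circ (\mathrm{id} \otimes D) \circ \Delta,
\]
obtained by replacing $\mu^f_\gr$ with $D$ in the decomposition~\eqref{eq:mufr_gr} of $\mu^f_{r, \gr}$. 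A telescoping argument for the commutator of $u$ with this four-term composition shows $\mathcal{D}_{\tilde u} = \widetilde{d_{\tilde u}}$, once I verify that $u$ commutes with $\Delta$, $\iota$, the multiplication, and the trace projection: the coderivation property of $u$ follows from $u(x_i) = [x_i, u_i]$ being primitive; commutativity with $\iota$ is checked on generators and extends because $u$ is a derivation and $\iota$ an anti-automorphism; the remaining two are immediate.

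It then suffices to show that $\tilde D$ satisfies the Leibniz-like rule for any derivation $D$ of $\ass_n$. Using Sweedler notation and the derivation property of $D$, I split $(\mathrm{id} \otimes D) \Delta(ab)$ into two summands. In the summand where $D$ lands on the $a$-Sweedler factor, the $b$-dependence contracts via the antipode identity $\sum b_{(1)} \iota(b_{(2)(1)}) \otimes b_{(2)(2)} = 1 \otimes b$ (a direct consequence of coassociativity and the antipode axiom), yielding $\tilde D(a)(1 \otimes b)$. The summand where $D$ hits the $b$-Sweedler factor requires the analogous identity $\sum \iota(a_{(2)(1)}) a_{(1)} \otimes a_{(2)(2)} = 1 \otimes a$, which holds only because $\ass_n$ is cocommutative (its generators being primitive), allowing the order of $\iota(\cdot)$ and the surviving factor to be swapped. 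The main obstacle is precisely this last bookkeeping step: the asymmetric placement of $\iota$ in~\eqref{eq:mufr_gr} makes the collapse of the second summand depend essentially on the cocommutativity of $\ass_n$, and that is the one subtlety demanding care.
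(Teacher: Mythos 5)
Your argument is correct and takes essentially the same route as the paper: the first claim is proved by the identical computation combining \eqref{eq:mu_product_gr} with the Leibniz rule and Proposition~\ref{prop:eta_sder}, and for the second claim the paper likewise appeals to the decomposition \eqref{eq:mufr_gr} and the compatibility of $u$ with the Hopf-algebra operations, simply asserting that ``the second assertion follows from the first.'' Your Sweedler-notation verification of the Leibniz-like rule for $\tilde D$ (including the roles of cocommutativity and the cyclic invariance of $|\,\cdot\,|$) correctly fills in details that the paper leaves implicit.
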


\begin{proof}
We simply write $\mu = \mu^f_\gr$ and $\eta = \eta_\gr$.
Let $a, b \in \ass_n$.
We compute
\begin{align*}
\mu (u(ab)) 
&= \mu( u(a)b  + a u(b)) \\
&= \mu( u(a)) b + u(a) \mu(b) + \eta(u(a), b) \\ 
& \quad + \mu(a) u(b) + a \mu(u(b)) + \eta(a, u(b)), \\
u( \mu(ab)) &=
u( \mu(a) b + a \mu(b) + \eta(a,b) \\
&= u(\mu(a)) b + \mu(a) u(b) + u(a) \mu(b) + a u(\mu(b)) + u(\eta(a,b)).
\end{align*}
Since $\tilde{u} \in \sder_n$, we have $\eta(u(a), b) + \eta(a, u(b)) = u(\eta(a,b))$ by Proposition~\ref{prop:eta_sder}.
Hence we see that $d_{\tilde{u}}$ is a derivation on $\ass_n$.

The map $\mu^f_{r, \gr}$ decomposes as shown in \eqref{eq:mufr_gr}.
Since the derivation $u$ commutes with the Hopf algebra operations on $\ass_n$, the second assertion follows from the first assertion.
\end{proof}

\begin{remark} \label{rem:d_u}
In fact, the derivation $d_{\tilde{u}} = \mu^f_\gr \circ u - u \circ \mu^f_\gr$ in Lemma~\ref{lem:mu_rho} can be written as 
\begin{equation} \label{eq:d_u_div}
d_{\tilde{u}} = \sigma_{\gr}({\rm div}(\tilde{u})),
\end{equation}
where $\sigma_\gr$ is the operation mentioned in Remark~\ref{rem:review_AKKN}.
This formula is the linearized version of \cite[eq.\,(84)]{AKKN_hg}.
\end{remark}

\begin{prop} \label{prop:u_delta}
Let $\tilde{u} \in {\sf sder}_n$ and assume that there is some $c\in {\sf ass}_n$ such that $\mu^f_{\rm gr}(u(x_i)) = [x_i, c]$ for all $i = 1,\ldots, n$.
Then, $\tilde{u} \in \krv_n$.
\end{prop}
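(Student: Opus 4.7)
My plan is to verify the condition defining $\krv_n$ by showing that ${\rm div}(\tilde u)$ lies in the center of the Lie algebra $\tr_n$. By fact~(v) of Remark~\ref{rem:review_AKKN} this center equals $\bigoplus_{i=0}^n |\Q[[x_i]]|$, so centrality is exactly the divergence condition ${\rm div}(\tilde u) = \sum_i |f_i(x_i)|$ demanded by the definition of $\krv_n$; equivalently, I need $[{\rm div}(\tilde u), b] = 0$ in $\tr_n$ for every $b \in \tr_n$.

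To establish this centrality, I would first apply Lemma~\ref{lem:mu_rho}, which gives that $d_{\tilde u} := \mu^f_{\rm gr} \circ u - u \circ \mu^f_{\rm gr}$ is a derivation of $\ass_n$. Since $\mu^f_{\rm gr}(x_i) = 0$ by formula~\eqref{eq:mufgr} (the defining sum is empty on a word of length one), the hypothesis gives $d_{\tilde u}(x_i) = \mu^f_{\rm gr}(u(x_i)) = [x_i, c]$ for every $i$. A derivation of the free algebra $\ass_n$ is determined by its values on generators, and the inner derivation $a \mapsto [a, c]$ sends $x_i$ to $[x_i, c]$ as well, so $d_{\tilde u}(a) = [a, c]$ for every $a \in \ass_n$. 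In particular, the image of $d_{\tilde u}$ lies in $[\ass_n, \ass_n]$, and $d_{\tilde u}$ descends to the zero map on $\tr_n$.

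Finally, I would invoke formula~\eqref{eq:d_u_div} of Remark~\ref{rem:d_u}, which identifies $d_{\tilde u}$ with the action via $\rho$ of $\sigma_{\rm gr}({\rm div}(\tilde u))$, combined with fact~(ii) of Remark~\ref{rem:review_AKKN}, namely $\rho(\sigma_{\rm gr}(a))(b) = [a,b]$ for $a, b \in \tr_n$. These show that the action of ${\rm div}(\tilde u)$ on $\tr_n$ by the Lie bracket coincides with the induced action of $d_{\tilde u}$ on $\tr_n$, which vanishes by the previous paragraph; hence ${\rm div}(\tilde u)$ is central, and $\tilde u \in \krv_n$. I do not anticipate a serious obstacle in this argument: once Lemma~\ref{lem:mu_rho} and formula~\eqref{eq:d_u_div} are granted, the proof reduces to the elementary observation that a derivation of a free algebra whose values on generators are commutators with a fixed element $c$ is the inner derivation $[\,\cdot\,, c]$, whose image automatically lies in $[\ass_n, \ass_n]$. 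The only real subtlety is the careful bookkeeping between the action of $d_{\tilde u}$ on $\ass_n$ (which is nonzero in general) and its descent to $\tr_n$ (which vanishes and gives the centrality).
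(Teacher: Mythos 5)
Your argument is correct, but it takes a genuinely different route from the paper's. The paper reduces the claim, via Theorem~\ref{thm:sderkrv}(i), to showing that $u$ commutes with $\delta^f_{\rm gr}$, and verifies this by an explicit computation: using \eqref{eq:mufr_gr} it shows $\mathcal{D}_{\tilde{u}}(a) = |\iota(c')| \otimes [a, c'']$ (Sweedler notation for $\Delta(c)$) for every monomial $a$, and since the second factor dies under $|\ |$, formula \eqref{eq:delta_mu_gr} gives the commutation with $\delta^f_{\rm gr}$. You instead attack the divergence condition defining $\krv_n$ directly: you correctly identify $d_{\tilde{u}}$ with the inner derivation $[\,\cdot\,,c]$ (two derivations of the free algebra agreeing on generators coincide), note that inner derivations vanish on $\tr_n$, and then combine \eqref{eq:d_u_div} with facts (ii) and (v) of Remark~\ref{rem:review_AKKN} to conclude that ${\rm div}(\tilde{u})$ is central in $\tr_n$ and hence of the required form. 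In effect you have inlined the ``direct proof of Theorem~\ref{thm:sderkrv}(i)'' sketched in Remark~\ref{rem:review_AKKN}. The trade-off is that your proof is shorter and avoids the Sweedler computation, but it leans on formula \eqref{eq:d_u_div}, which the paper states only in Remark~\ref{rem:d_u} with a citation (the linearized version of \cite[eq.\,(84)]{AKKN_hg}) and does not prove, whereas the paper's computation needs only Lemma~\ref{lem:mu_rho} and Theorem~\ref{thm:sderkrv}. Two minor points: the center of $\tr_n$ is the span $\sum_{i=0}^n |\Q[[x_i]]|$, which need not be a direct sum, and one should note (as you implicitly do) that every derivation of $\ass_n$ descends to $\tr_n$ so that ``descends to zero'' makes sense; neither affects your conclusion.
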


\begin{proof}
By Theorem~\ref{thm:sderkrv}, it is enough to prove that $u$ commutes with $\delta^f_{\rm gr}$.  
A straightforward computation using \eqref{eq:mufr_gr} yields $\mu^f_{r, \gr}(u(x_i)) = |\iota(c')| \otimes [x_i, c'']$ for all $i=1,\ldots,n$, where we write $\Delta(c) = c' \otimes c''$ using the Sweedler notation.

Let $a = a_1\cdots a_m \in \ass_n$ be a product of $m$ elements of degree $1$.
Note that $\mathcal{D}_{\tilde{u}}(a_i) = |\iota(c')| \otimes [a_i, c'']$ since $\mu^f_\gr(a_i) = 0$.
By Lemma~\ref{lem:mu_rho}, we have
\begin{align*}
\mathcal{D}_{\tilde{u}}(a) 
& = \sum_{i=1}^m (1\otimes a_1 \cdots a_{i-1}) \mathcal{D}_{\tilde{u}}(a_i) (1\otimes a_{i+1} \cdots a_m) \\
& = \sum_{i=1}^m |\iota(c')| \otimes a_1\cdots a_{i-1} [a_i, c''] a_{i+1} \cdots a_m \\
& = |\iota(c')| \otimes [a, c''].
\end{align*}
Since $| [a, c''] | =0$, we obtain $(\delta^f_\gr \circ u - u \circ \delta^f_\gr)(|a|) = 0$ by \eqref{eq:delta_mu_gr}.
This completes the proof.
\end{proof}

\begin{proof}[Proof of Theorem~\ref{thm:main} (i)]
Let $\varphi = \varphi(x,y) \in \grt_1^\EM$.
Then $\varphi$ satisfies equation \eqref{eq:ppss5-gon} and the tangential derivation $\nu^\EM(\varphi) = (\varphi(y,x), \varphi(x,y))$ is special.
In particular, by Proposition~\ref{prop:eta_sder} we have
\begin{equation} \label{eq:payypa}
\pa_y \varphi = \pa^y \varphi = \iota(\pa_y \varphi).
\end{equation}

Put $f(s) := -(\pa_y \varphi)(s,0) \in \Q[[s]]$.
We will show that $\nu^\EM(\varphi) \in \sder_2$ satisfies the assumption of Proposition~\ref{prop:u_delta}.
By Proposition~\ref{prop:Rmugr}, one may replace $\mu^f_\gr$ with $R$.
We first compute
\begin{align*}
R(\nu^\EM(\varphi)(y)) &=
R([y, \varphi(x,y)]) \\
&= [y,R(\varphi)] + (\pa_y \varphi)y - y \iota(\pa_y \varphi) \\
&= [y, R(\varphi) - \pa_y \varphi] \\
&= [y, (\pa_y \varphi)(y,0) - (\pa_y \varphi)(x+y,0)] \\
&= [y, f(x+y)].
\end{align*}
Here, we have used formula \eqref{eq:Rrecursive} in the second line, equation~\eqref{eq:payypa} in the third line, equation \eqref{eq:ppss5-gon} in the fourth line, and the fact that $y$ commutes with any power series in $y$ in the last line.
Similarly, we compute 
\begin{align*}
R(\nu^\EM(\varphi)(x)) &=
R([x, \varphi(y,x)]) \\ 
&= [x, R(\varphi(y,x))] + \pa_x(\varphi(y,x)) x - x \iota(\pa_x(\varphi(y,x))) \\
&= [x, R(\varphi)(y,x) - (\pa_y \varphi)(y,x) ] \\
&= [x, (\pa_y \varphi)(x,0) - (\pa_y \varphi)(y+x,0)] \\
&= [x, f(x+y)].
\end{align*}
This completes the proof.
\end{proof}

\subsection{Symmetric Kashiwara-Vergne Lie algebra} \label{subse:sym_KV}

Recall from \cite[Section 8]{AT12} that the symmetric part of the Kashiwara-Vergne Lie algebra $\krv_2^{\rm sym}$ is the invariant Lie subalgebra of $\krv_2$ by the involution $(u(x,y), v(x,y)) \mapsto (v(y,x), u(y,x))$.
In this section, we prove the second statement of Theorem~\ref{thm:main}.

\begin{lem} \label{lem:put0}
Let $\varphi = \varphi(x,y) \in \lie_2$ be an element of degree at least two.
Then, $R(\varphi)(0,y) = R(\varphi)(x,0) = (\pa_y \varphi)(0,y) = 0$.
\end{lem}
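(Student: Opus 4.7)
The plan is to reduce all three identities to a single structural observation about $\lie_2$: any Lie polynomial in $\lie_2$ of degree at least two has zero coefficient on each pure monomial $x^k$ and $y^k$. Indeed, the intersection $\lie_2 \cap \Q\langle x\rangle$ is spanned by $x$ (and similarly for $y$), since higher-degree pure powers are not primitive in the free associative algebra.

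First I would handle $(\pa_y \varphi)(0,y) = 0$ by a one-line argument using only the definition of the partial derivatives. The identity $\varphi = (\pa_x \varphi)\, x + (\pa_y \varphi)\, y$ in $\ass_2$, specialized at $x = 0$, gives
\[
\varphi(0,y) \;=\; (\pa_y \varphi)(0,y)\cdot y.
\]
The left-hand side vanishes by the structural observation, and right multiplication by $y$ is injective on the free associative algebra $\ass_2$, so $(\pa_y \varphi)(0,y) = 0$.

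For $R(\varphi)(x,0) = 0$ and $R(\varphi)(0,y) = 0$, I would invoke Proposition~\ref{prop:Rmugr} to replace $R$ with the restriction of $\mu^f_{\rm gr}$ to $\lie_2$ and then apply the explicit formula \eqref{eq:mufgr}, recalling that the pairing $\mathfrak{z}$ satisfies $\mathfrak{z}(x,x) = x$, $\mathfrak{z}(y,y) = y$ and $\mathfrak{z}(x,y) = \mathfrak{z}(y,x) = 0$. For each monomial $a_1\cdots a_m$ appearing in $\varphi$, every term of $\mu^f_{\rm gr}(a_1\cdots a_m)$ has length $m-1$ and is obtained by fusing an adjacent pair of equal letters while leaving the remaining letters unchanged. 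Setting $y = 0$ kills every monomial in which any $y$ still appears, so the only terms contributing to $R(\varphi)(x,0)$ come from fusing an adjacent $xx$-pair inside a monomial of $\varphi$ whose remaining letters are all $x$, i.e., from a pure $x^m$ term of $\varphi$. By the structural observation, no such term is present in $\varphi$, and therefore $R(\varphi)(x,0) = 0$. A symmetric argument, exchanging the roles of $x$ and $y$, yields $R(\varphi)(0,y) = 0$.

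There is no real obstacle here: once Proposition~\ref{prop:Rmugr} is available, all three vanishings follow immediately from the explicit formula for $\mu^f_{\rm gr}$ and the absence of pure-letter monomials in Lie polynomials of degree $\ge 2$. The only bookkeeping point worth spelling out is the case analysis on which adjacent pair can survive the substitution $y=0$ (respectively $x=0$), which is straightforward.
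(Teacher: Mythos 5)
Your proof is correct and follows essentially the same route as the paper: both rest on the observation that a Lie polynomial of degree at least two has no pure $x^k$ or $y^k$ monomials, combined with Proposition~\ref{prop:Rmugr} and formula~\eqref{eq:mufgr} to see that no word of $R(\varphi)$ can survive the substitution $x=0$ or $y=0$. Your derivation of $(\pa_y\varphi)(0,y)=0$ via $\varphi(0,y)=(\pa_y\varphi)(0,y)\,y$ and injectivity of right multiplication is only a cosmetic variant of the paper's direct remark that every word of $\pa_y\varphi$ contains an $x$.
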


\begin{proof}
Notice that $\varphi$ seen as an element of $\ass_2$ is a linear combination of words which contain at least one $x$ and at least one $y$.
Formula~\eqref{eq:mufgr} implies that $R(\varphi)$ is a linear combination of words with the same property.
Hence $R(\varphi)(0,y) = R(\varphi)(x,0) = 0$.
Similarly we have $(\pa_y \varphi)(0,y) = 0$, since $\pa_y \varphi$ is a linear combination of words which contain at least one $x$.
\end{proof}

\begin{proof}[Proof of Theorem~\ref{thm:main} (ii)]
Let $\tilde{u} = (\varphi(y,x), \varphi(x,y))\in \krv_2^{\rm sym}$ be homogeneous of degree at least two.

{\em Step 1.}
We first consider the case where $\tilde{u} \in \krv_2^0$.
By Theorem~\ref{thm:sderkrv} (ii), $\tilde{u}$ commutes with $\mu^f_\gr = R$.
Hence
\[
0 = \tilde{u}(R(y)) = R(\tilde{u}(y)) 
= R([y,\varphi]) 
= [y, R(\varphi) - \pa_y \varphi].
\]
Therefore, we have $R(\varphi) - \pa_y \varphi \in \Q[[y]]_{\ge 1}$.
By Lemma~\ref{lem:put0}, we obtain $R(\varphi) - \pa_y \varphi = 0$.
Furthermore, $(\pa_y \varphi)(x,0) = R(\varphi)(x,0) = 0$. 
Therefore, we obtain equation~\eqref{eq:ppss5-gon} for $\varphi$.
Hence $\varphi \in \grt_1^\EM$ and $\tilde{u} = \nu^\EM(\varphi)$.

{\em Step 2.}
We next consider the general case.
Let $k = \deg \varphi$.
If $k$ is even, then ${\rm div}(\tilde{u}) = 0$ by \cite[Proposition~4.5]{AT12}.
Hence $\tilde{u} \in \krv_2^0$, and $\tilde{u}$ is in the image of $\nu^\EM$  by Step 1.
Assume that $k$ is odd (and $\ge 3$).
There exists an element $\sigma_k \in \grt_1$ with the property
\[
{\rm div}(\nu(\sigma_k)) = 
|x^k + y^k - (x+y)^k |
\]
(see \cite[Proposition~6.3]{Drinfeld} and \cite[Proposition~4.10]{AT12}).
Thus there exists a constant $c \in \Q$ such that $\tilde{u} - c \nu(\sigma_k)$ has the vanishing divergence, i.e., $\tilde{u} - c \nu(\sigma_k)\in \krv_2^0$.
From Step 1, we obtain that $\tilde{u} - c \nu(\sigma_k)$ is in the image of $\nu^\EM$.
Let $\psi_k(x,y) = \sigma_k(-x-y, y) \in \grt_1^\EM$. 
Then $\nu(\sigma_k) = \nu^\EM(\psi_k)$.
Therefore, $\tilde{u} = (\tilde{u} - c \nu(\sigma_k)) + c \nu(\sigma_k)$ is in the image of $\nu^\EM$.
\end{proof}

\begin{remark} \label{rem:grt_1^em_0}
Let $\varphi \in \lie_2$ be an element of degree at least two.
The above proof shows that $\nu^\EM(\varphi) \in \krv_2^0$ if and only if $\varphi$ satisfies \eqref{eq:vscondition} and $R(\varphi) - \pa_y \varphi = 0$.
When this is the case, the element
\[
{\rm div}(\nu^\EM(\varphi)) = 
| x \pa_x(\varphi(y,x)) + y (\pa_y \varphi) | = 
|x (R(\varphi)(y,x)) + y R(\varphi)| 
\]
vanishes.
\end{remark}

\appendix

\section*{Appendix: Emergent knotted objects}
\label{subsec:eko}

This appendix was written by Dror Bar-Natan.

Consider some space $\calK$ of knotted objects in a three manifold $M$ (knots, or links, or knotted graphs, or tangles if $M$ has a boundary, or braids if one can specify a ``vertical'' direction in $M$).
If we mod out $\calK$ by the relation $\npc = \nnc$, knottedness is eliminated and what remains is a space of curves (or multi-curves, or graphs, etc.) regarded up to homotopy.
``Emergent knotted objects'' are what you get when you mod out $\calK$ by a slightly weaker version of $\npc = \nnc$ known in the theory of finite type invariants as the relation $\ndp \, \ndp =0$. 
Knottedness almost entirely disappears yet in the space we get, $\calK/(\ndp \, \ndp)$, we see that just a bit of knot theory begins to emerge beyond the homotopy theory that is already there.

More precisely, we consider $\bbQ\calK$, the space of $\bbQ$-linear combinations of (some type of) knotted objects in $M$.
As is often done when discussing finite type invariants~\cite{Bar-Natan:OnVassiliev, ChmutovDuzhinMostovoy:Vassiliev}, we extend $\calK$ by allowing double points ($\ndp$) while declaring that a double point is just a short for the linear combination $\npc - \nnc$; namely, we set $\ndp = \npc - \nnc$.
Then, as stated before, $\calK/\ndp$ is the space of (linear combinations of) curves modulo homotopy.
We continue as in the theory of finite type invariants and also study the quotient $\calK^{em}$ of $\bbQ\calK$ by knots that have {\it two} double points, where each of them represents a combination $\npc - \nnc$.
The space $\calK^{em}$ is the space of emergent knotted objects in $M$.

If $M$ is the Euclidean space $\bbR^3$ (or a ball in $\bbR^3$ when discussing tangles), homotopy theory is trivial so $\calK/\ndp$ becomes trivial (if, say, we are studying $n$-component links, they all become equivalent in $\calK/\ndp$).
The space of emergent knots $\calK^{em}=\calK/(\ndp \, \ndp)$ is slightly more interesting, though one may easily check that two links become equivalent if and only if all of the pairwise linking numbers of their components are the same (with similar statements for other types of knotted objects).
Thus in that case, $\calK^{em}$ is not completely trivial yet nevertheless quite simple.

In~\cite{LesDiablerets-2208, BNDHLS}, Bar-Natan, Dancso, Hogan, Liu, and Scherich study emergent tangles in a pole dancing studio $\PDS_m$ (an $m$-punctured disk cross an interval, or a room with $m$ vertical lines removed).
They find that in that case, an appropriate theory of expansions for emergent tangles leads to ``homomorphic'' expansions of the Goldman-Turaev Lie bialgebra (which in itself is a Lie bialgebra of curves modulo homotopy, a $(\calK/\ndp)$-type object).

The subquotients $\edk_{m,n}$ of the Drinfeld-Kohno Lie algebra $\dk_{m+n}$ that we use in this paper, in particular within the statement of our main theorem in Section~\ref{sec:intro_main}, arise from emergent braids in a pole dancing studio. 
Let us explain how.

Let $P_n$ denote the $n$-strand pure braid group, and let $P_{m,n}$ denote the kernel of the ``forget the last $n$ strands'' map $P_{m+n}\to P_m$; elements of $P_{m,n}$ are pure braids whose first $m$ strands remain stationary, like $m$ poles in a pole dancing studio (and hence we cease to call these first $m$ strands ``strands'', and instead refer to them as ``poles'').
Hence elements of $P_{m,n}$ can be thought of as $n$-strand braids in $\PDS_m$. 
Thus, following the above discussion of emergent knotted objects, we define $\Q P_{m,n}^\EM$ to be the quotient of $\Q P_{m,n}$ by the relation $\ndp \, \ndp = 0$, where all the strands involved in the double points belong to the last $n$ strands in $P_{m,n}$; namely, they are ``strands'' rather than ``poles''.

For the more algebraically-inclined, here is a fully-algebraic definition of $\Q P_{m,n}^\EM$: We let $\psi\colon P_n\to P_{m,n}\subset P_{m+n}$ be map which sends an $n$-strand pure braid into an $(m+n)$-strand pure braid by adding $m$ straight strands (``poles'') on the left; the resulting braid is clearly in $P_{m,n}$.
Let $I P_n$ be the augmentation ideal of $\Q P_n$ (namely, those $\bbQ$-linear combinations of elements of $P_n$ whose sum of coefficients is $0$).
Let $J$ be the two-sided ideal of $\Q P_{m,n}$ generated by $\psi(I P_n)$ (it is the two-sided ideal generated by $\ndp$, if both strands of the double point are strands and not poles). Finally, $\Q P_{m,n}^\EM:= \bbQ P_{m,n}/J^2$.
The augmentation ideal of $\Q P_{m,n}$ descends to a two-sided ideal of $\Q P_{m,n}^\EM$.

In complete generality, let $N$ be an associative $\Q$-algebra and $I$ a two-sided ideal of $N$.
For example, if $G$ is any group, one can set $N$ to be the group algebra $\Q G$ and $I = IG$ its augmentation ideal.
The powers of $I$ defines a decreasing filtration of $N$ and we can consider the completed associated graded algebra $\mathcal{A}_N$ defined by that filtration: $\mathcal{A}_N := \prod_{k\geq 0} I^k/ I^{k+1}$.
It is well known (see e.g.~\cite{Drinfeld, Kohno}) that $\calA_{\Q P_{m+n}}$ is the completed universal enveloping algebra of the Drinfeld-Kohno Lie algebra $\dk_{m+n}$ that we alluded to before, whose generators are $\{t_{ij}=[\sigma_{ij}-1]\colon 1\leq i\neq j\leq m+n\}$, where $\sigma_{ij}$ is the generator of $P_{m+n}$ in which strands $\#i$ and $\#j$ twist around each other once, in the positive direction.

It is now easy to determine $\calA_{\Q P_{m,n}^\EM}$: it is the completed universal enveloping algebra of the subquotient of $\dk_{m,n}$ in which all the generators of the form $\{t_{ij}\colon i,j\leq m\}$ are removed (as the generators $\{\sigma_{ij}\colon i,j\leq m\}$, the pole-pole twists, are removed from $P_{m,n}$), and in which we mod out by the square of the ideal generated by $\{t_{ij}\colon i,j>m\}$, corresponding to the emergent quotient $\ndp \, \ndp=0$. Thus $\calA_{\Q P_{m,n}^\EM} \cong U(\edk_{m,n})$ with the same $\edk_{m,n}$ as in the previous section. 
An elementary proof of this isomorphism is given in Proposition~\ref{prop:grpuremb2}.



\begin{thebibliography}{99}

\bibitem{AET}
A.~Alekseev, B.~Enriquez, and C.~Torossian,
Drinfeld associators, braid groups and explicit solutions of 
the Kashiwara-Vergne equations,
Publications Math\'ematiques de L'IH\'ES, {\bf 112} (2010), 143--189.

\bibitem{AKKN18}
A.~Alekseev, N.~Kawazumi, Y.~Kuno and F.~Naef,
The Goldman-Turaev Lie bialgebra in genus zero and the Kashiwara-Vergne problem, 
{\it Adv.\ Math.} \textbf{326} (2018), 1--53.

\bibitem{AKKN_hg}
A.~Alekseev, N.~Kawazumi, Y.~Kuno and F.~Naef,
The Goldman-Turaev Lie bialgebra and the Kashiwara-Vergne problem in higher genera, 
preprint, arXiv:1804.09566v3

\bibitem{ANR24}
A.~Alekseev, F.~Naef and M.~Ren,
Poisson brackets and coaction maps of regularized holonomies of the KZ equation,
preprint, arXiv:2409.08894

\bibitem{AT12}
A.~Alekseev and C.~Torossian, 
The Kashiwara-Vergne conjecture and Drinfeld's associators,
{\it Ann.\ of Math.} (2) \textbf{175} (2012), no. 2, 415--463.

\bibitem{Bar-Natan:OnVassiliev}
D.~Bar-Natan, 
On the Vassiliev knot invariants, 
{\it Topology} \textbf{34} (1995), no. 2, 423--472.

\bibitem{BN98}
D.~Bar-Natan, 
On associators and the Grothendieck-Teichmuller group, I,
{\it Selecta Math. (N.S.)} \textbf{4} (1998), no. 2, 183--212.

\bibitem{LesDiablerets-2208}
D.~Bar-Natan, 
Tangles in a Pole Dance Studio: A Reading of Massuyeau, Alekseev, and Naef,
talk given in {\it Algebra, Topology and the Grothendieck-Teichmüller group}, Les Diablerets, September 2022. Handout and video at \url{http://drorbn.net/ld22}.

\bibitem{BND17}
D.~Bar-Natan and Z.~Dancso,
Finite type invariants of w-knotted objects II: tangles, foams and the Kashiwara-Vergne problem,
{\it Math.\ Ann.} \textbf{367} (2017), no. 3-4, 1517--1586.

\bibitem{BNDHLS}
D.~Bar-Natan, Z.~Dancso, T.~Hogan, J.~Liu, and N.~Scherich,
Goldman-Turaev formality from the Kontsevich integral, in preparation.

\bibitem{Bou}
N.~Bourbaki,
{\it Groupes et alg\'{e}bres de Lie},
Hermann, Paris, 1971--71.

\bibitem{ChmutovDuzhinMostovoy:Vassiliev}
S.~Chmutov, S.~Duzhin, and J.~Mostovoy,
{\it Introduction to Vassiliev Knot Invariants},
Cambridge University Press, Cambridge UK, 2012.

\bibitem{Drinfeld}
V.~G.~Drinfeld, 
On quasitriangular quasi-Hopf algebras and on a group that is closely connected with ${\rm Gal}(\overline{\Q}/\Q)$,
{\it Leningrad Math.\ J.} \textbf{2} (1991) no. 4, 829--860.

\bibitem{FresseI}
B.~Fresse,
{\it Homotopy of operads and Grothendieck-Teichm\"{u}ller groups, Part I},
Math.\ Surveys Monogr.\ \textbf{217}, American Mathematical Society, Providence, RI, 2017.

\bibitem{Furusho}
H.~Furusho, 
Pentagon and hexagon equations,
{\it Ann.\ of Math.} (2)  \textbf{171} (2010), no. 1, 545--556.

\bibitem{Go86}
W.~Goldman, 
Invariant functions on Lie groups and Hamiltonian flows of surface group representations, 
{\it Invent.\ Math.} \textbf{85} (1986) no. 2, 262--302.

\bibitem{KV78}
M.~Kashiwara and M.~Vergne,
The Campbell-Hausdorff Formula and Invariant Hyperfunctions, 
{\it Invent.\ Math.} {\bf 47} (1978), no. 3, 249--272.

\bibitem{Kohno}
T.~Kohno, 
S\'{e}rie de Poincar\'{e}-Koszul associ\'{e}e aux groupes de tresses pures,
{\it Invent.\ Math.} \textbf{82} (1985), no. 1, 57--75.

\bibitem{Lam00}
S.~Lambropoulou,
Braid structures in knot complements, handlebodies and 3-manifolds,
in: {\it Knots in Hellas '98 (Delphi), Ser.\ Knots Everything} \textbf{24}, 274--289 (2000).

\bibitem{Mas18}
G.~Massuyeau,
Formal descriptions of Turaev's loop operations,
{\it Quantum Topol.} \textbf{9} (2018), no. 1, 39--117.

\bibitem{MT13}
G.~Massuyeau and V.~Turaev,
Fox pairings and generalized Dehn twists,
{\it Ann.\ Inst.\ Fourier (Grenoble)} \textbf{63} (2013), no. 6, 2403--2456.

\bibitem{Naef20}
F.~Naef,
Poisson brackets in Kontsecvich's ``Lie world'',
{\it J.\ Geom.\ Phys.} \textbf{155} (2020), 103741, 13 pp.

\bibitem{NN_in_prep}
R.~Navarro Betancourt and F.~Naef,
A functorial approach to the Kashiwara-Vergne problem, in preparation.

\bibitem{Turaev78}
V.~Turaev, 
Intersections of loops in two-dimensional manifolds,
{\it Mat.\ Sb.} \textbf{106/148} (1978), no. 4, 566--588.

\bibitem{Turaev91}
V.~Turaev,
Skein quantization of Poisson algebras of loops on surfaces,
{\it Ann.\ Sci.\ \'{E}cole Norm.\ Sup.} (4) \textbf{24} (1991), no. 6, 635--704.

\end{thebibliography}
\end{document}